\def\bbordermatrix#1{\begingroup \m@th
	\global\let\perhaps@scriptstyle\scriptstyle
	\@tempdima 4.75\p@
	\setbox\z@\vbox{%
		\def\cr{%
			\crcr
			\noalign{%
				\kern2\p@
				\global\let\cr\endline
				\global\let\perhaps@scriptstyle\relax
			}%
		}%
		\ialign{$\make@scriptstyle{##}$\hfil\kern2\p@\kern\@tempdima
			&\thinspace\hfil$\perhaps@scriptstyle##$\hfil
			&&\quad\hfil$\perhaps@scriptstyle##$\hfil\crcr
			\omit\strut\hfil\crcr
			\noalign{\kern-\baselineskip}%
			#1\crcr\omit\strut\cr}}%
	\setbox\tw@\vbox{\unvcopy\z@\global\setbox\@ne\lastbox}%
	\setbox\tw@\hbox{\unhbox\@ne\unskip\global\setbox\@ne\lastbox}%
	\setbox\tw@\hbox{$\kern\wd\@ne\kern-\@tempdima\left[\kern-\wd\@ne
		\global\setbox\@ne\vbox{\box\@ne\kern2\p@}%
		\vcenter{\kern-\ht\@ne\unvbox\z@\kern-\baselineskip}\,\right]$}%
	\null\;\vbox{\kern\ht\@ne\box\tw@}\endgroup}
\def\make@scriptstyle#1{\vcenter{\hbox{$\scriptstyle#1$}}}
\newcommand{\C}{\mathbb{C}}
\newcommand{\efe}{\mathbb{F}}
\newcommand{\FF}{\mathbb{F}}
\newcommand{\F}{\mathbb{F}}
\newcommand{\la}{\lambda}
\def\rank{\mathop{\rm rank}\nolimits}
\newcommand{\wh}{\widehat}
\newtheorem{theo}{Theorem}[section]
\newtheorem{deff}[theo]{Definition}
\newtheorem{prop}[theo]{Proposition}
\newtheorem{lem}[theo]{Lemma}
\newtheorem{rem}[theo]{Remark}
\newtheorem{example}[theo]{Example}
\DeclareMathOperator{\diag}{diag}
\DeclareMathOperator{\rev}{rev}
\begin{document}
\title{Linearizations of rational matrices from general representations}

\begin{abstract}
We construct a new family of linearizations of rational matrices $R(\la)$  written in the general form  $R(\lambda)= D(\la)+C(\la)A(\la)^{-1}B(\la)$, where $D(\la)$, $C(\la)$, $B(\la)$ and $A(\la)$ are polynomial matrices. 
 Such representation always exists and are not unique. The new linearizations are constructed from linearizations of the polynomial matrices $D(\la)$ and $A(\la)$, where each of them can be represented in terms of any polynomial basis.
In addition, we show how to recover eigenvectors, when $R(\lambda)$ is regular, and  minimal bases and minimal indices, when $R(\lambda)$ is singular, from those of their linearizations in this family. 
\end{abstract}

\begin{keyword}
	rational matrix \sep rational eigenvalue problem \sep block minimal basis pencil \sep linearization in a set \sep linearization at infinity \sep grade \sep recovery of eigenvectors \sep recovery of minimal indices \sep recovery of minimal bases
	\MSC 65F15 \sep 15A18 \sep 15A22 \sep 15A54 \sep 93B18 \sep 93B60
\end{keyword}

\date{ }
\author[montana]{Javier Pérez\corref{cor1}}
\ead{javier.perez-alvaro@mso.umt.edu}

\author[uc3m]{María C. Quintana\fnref{fn2}}
\ead{maquinta@math.uc3m.es}

\cortext[cor1]{Corresponding author}

\address[montana]{Department of Mathematical Sciences, University of Montana, USA.}

\address[uc3m]{Departamento de Matem\'aticas,
	Universidad Carlos III de Madrid, Avda. Universidad 30, 28911 Legan\'es, Spain.\\
\vspace{0.5cm}
{\rm Dedicated to Paul Van Dooren on the occasion of his 70th birthday }}

\fntext[fn2]{Supported by ``Ministerio de Econom\'ia, Industria y Competitividad (MINECO)'' of Spain and ``Fondo Europeo de Desarrollo Regional (FEDER)'' of EU through grants MTM2015-65798-P and MTM2017-90682-REDT, and the predoctoral contract BES-2016-076744 of MINECO.}

\maketitle

\section{Introduction}

Let $R(\la)$ be a rational matrix, that is, a matrix whose entries are scalar rational functions in the variable $\la$.
 The rational eigenvalue problem (REP) consists of finding scalars $\la_0$ such that $\la_0$ is not a pole of $R(\la)$, i.e., $R(\la_0)$ has finite entries, and that there exist nonzero constant vectors $x$ and $y$ satisfying 
\[
R(\la_0)x=0 \quad\mbox{and}\quad y^{T}R(\la_0)=0,
\]
under the assumption that $R(\la)$ is regular, i.e., $R(\lambda)$ is square and its determinant is not identically equal to zero.
 The scalar $\la_0$ is said to be an eigenvalue of $R(\la)$ and the vectors $x$ and $y^T$ are called, respectively, right and left eigenvectors associated to $\la_0$. 

A non-regular rational matrix $R(\lambda)$ is also called singular. 
 In general, regardless of whether $R(\lambda)$ is regular or singular, a scalar $\la_0$ is said to be an eigenvalue of $R(\la)$ if $\la_0$ is not a pole of $R(\la)$ and
\[  
 \rank R(\la_0)< \rank R(\la),
\]
where $\rank R(\la)$ denotes the rank of $R(\lambda)$ over the field of rational functions in $\lambda$.
 Moreover, the problem of finding the eigenvalues of a rational matrix can also be seen as the problem of finding the zeros of $R(\la)$ that are not poles. 
If $\la_0$ is a pole of $R(\la)$ and there exists a polynomial vector $v(\la)$ such that $v(\la_0)\neq 0$ and that $\lim_{\la \to \la_0}R(\la)v(\la)=0$ then $\la_0$ is said to be an eigenpole of $R(\la)$  \cite{AlBe16}. 

Applications of rational matrices in linear systems and control theory can be found in the classical references \cite{Kailath80,McMi2, Rosen70, Vard91}, and numerical algorithms for computing zeros and poles and other structural data of rational matrices can be found in \cite{vandooren1981} and the references therein.

The numerical solution of REPs is recently getting a lot of attention from the numerical linear algebra community since REPs appear directly from applications \cite{guttel-tisseur-2017, SuBai} or as approximations to arbitrary nonlinear eigenvalue problems (NLEPs) \cite{nlep,automatic,Saad}.

A competitive method for solving REPs is linearization \cite{AlBe16, strong, local, dopmarquin2019, SuBai}. Linearization transforms the REP into a generalized eigenvalue problem in such a way that the pole and zero information of the corresponding rational matrix is preserved. 
The generalized eigenvalue problem is then solved by the QZ algorithm, for small to medium size dense matrices, or with Krylov methods adapted to the particular structure of the linearization, for large-scale problems and sparse matrices \cite{compact_rat_Krylov,nlep}.

The construction of linearizations for rational matrices presented in \cite{AlBe16, strong,dopmarquin2019, SuBai} depends on two facts.
First, any rational matrix $R(\la)$ can be uniquely written as a sum of the form $R(\lambda)=D(\lambda)+R_{sp}(\lambda)$, where $D(\lambda)$ is a polynomial matrix and $R_{sp}(\lambda)$ is a strictly proper rational matrix \footnote{A strictly proper rational matrix is a matrix whose entries are strictly proper rational functions.
A rational function $p(\lambda)/q(\lambda)$ is strictly proper when the degree of the polynomial $p$ is smaller than the degree of the polynomial $q$.}.
And, second, the strictly proper rational matrix $R_{sp}(\lambda)$ can be written in the form 
\begin{equation}\label{eq:state-space-form}
R_{sp}(\lambda)=C( I_n \la -A)^{-1}B,
\end{equation}
where $A$, $B$ and $C$ are constant matrices.
The decomposition \eqref{eq:state-space-form} is called a state-space realization of $R_{sp}(\lambda)$, and can always be taken to be  minimal (i.e., of least order); see, for example, \cite[Ch. 3, Sec. 5.1]{Rosen70} or \cite[Sec. 1.10]{Vard91}.


Representations of the form 
\begin{equation}\label{eq:state-space representation}
R(\lambda) = D(\la)+C(I_n \la  -A)^{-1}B
\end{equation}
arise naturally in control problems.
Furthermore, if the rational  matrix $R(\lambda)$ is not in the form \eqref{eq:state-space representation}, there exist procedures for obtaining such a representation \cite{Kailath80, Rosen70, Vard91}.
However, these procedures are not simple, and may introduce errors that were not present in the original problem.
For this reason, the main aim of this work is to construct linearizations for rational matrices from more general representations. 

We will show how to construct linearizations of rational matrices that are written in the general form
\begin{equation}\label{eq:general representation}
R(\lambda) = D(\la)+C(\la)A(\la)^{-1}B(\la),
\end{equation}
where $A(\la)$, $B(\la)$, $C(\la)$ and $D(\la)$ are polynomial matrices, possibly non linear and possibly expressed in different bases. 
Representations of the form \eqref{eq:general representation} arise naturally, for example, when solving nonlinear eigenvalue problems of the form
\[
R(\lambda)x=\left(P(\la)+\displaystyle\sum_{i=1}^{m}\frac{n_{i}(\la)}{d_{i}(\la)}( A_{i} \la -B_i)\right)x=0,
\]
where $P(\la)$ is a polynomial matrix, $\frac{n_{i}(\la)}{d_{i}(\la)}$ are scalar rational functions, and $A_i$ and $B_i$ are constant matrices.

 The paper is organized as follows.
After a preliminary section, where we introduce the notation used throughout the paper and some basic results, we present the new family of  linearizations for rational matrices in Sections \ref{sec:local} and \ref{sec:infinity}.
 In Section \ref{example}, we present an example that highlights the difference between our approach and the previous approaches to the problem of linearizing rational matrices.
 In Section \ref{sec:recovery} we study how to recover minimal bases, minimal indices and eigenvectors of rational matrices from those of their linearizations constructed in Sections \ref{sec:local} and \ref{sec:infinity}. Finally, we apply the new linearizations to solve scalar rational equations in Section \ref{sec:scalar}, and we give some concluding remarks in Section \ref{conclusion}.
 
\section{Preliminaries}\label{prelim}

Throughout this work, we denote by $\efe$ an algebraically closed field that does not include infinity.

\subsection{Polynomial and rational matrices}
Let $\efe[\la]$ denote the ring of polynomials with coefficients in $\efe$, and let $\efe(\la)$ denote the field of rational functions.
By $\efe^{p\times m}$, $\efe[\la]^{p\times m}$ and $\efe(\la)^{p\times m}$, we denote the sets of $p\times m$ matrices with entries in $\efe,$ $\efe[\la]$ and $\efe(\la),$  respectively. 
The elements of $\efe[\la]^{p\times m}$ are called polynomial matrices, and the elements of $\efe(\la)^{p\times m}$ are called rational matrices. 

A rational matrix $R(\la)$ is said to be regular if it is invertible for some $\la_0\in\F$.
In words, a rational matrix $R(\lambda)$ is regular if it is square and $\det R(\lambda)$ is non-zero for some $\la_0\in\F$.
A non-regular rational matrix $R(\lambda)$ is called singular.
If a rational matrix $R(\lambda)$ is invertible for all $\lambda$ in some set $\Omega\subseteq \mathbb{F}$, we say that $R(\lambda)$ is regular in $\Omega$.

We denote by $\rank R(\lambda)$ the rank of the rational matrix $R(\lambda)$ over the field $\mathbb{F}(\lambda)$.
Abusing notation, if $\lambda_0\in\efe$, we denote by $\rank R(\lambda_0)$ the rank over the field $\efe$ of the constant matrix $R(\lambda_0)$.

A polynomial matrix $P(\lambda)\in\efe[\lambda]^{p\times m}$ can always be written in the form
\begin{equation}\label{eq:matrix poly}
P(\lambda) =  P_d \lambda^d +  P_{d-1} \lambda^{d-1} + \cdots + P_1  \lambda + P_0, 
\end{equation}
for some constant matrices  $P_d,\hdots,P_1,P_0\in\mathbb{F}^{p\times m}$, with $P_d\neq 0$. 
The scalar $d$ is called the degree of $P(\lambda)$, and it is denoted by $d=\deg P(\lambda)$.
If the leading coefficient matrix $P_d=0$, then $d$ is called the grade of $P(\lambda)$.
By $\mathrm{grade}\, P(\lambda)$, we denote the grade of a polynomial matrix $P(\lambda)$.


A square polynomial matrix whose determinant is a nonzero constant is called unimodular.
If $V(\lambda)$ is unimodular, its inverse $V(\lambda)^{-1}$ is also unimodular.
Two polynomial matrices $P(\lambda)$ and $Q(\lambda)$ are said to be unimodularly equivalent if there are unimodular polynomial matrices $U(\lambda)$ and $V(\lambda)$ such that $P(\lambda) = U(\lambda)Q(\lambda)V(\lambda)$.

We now review some concepts and ideas from system theory; for more details see, for example, \cite{Rosen70}. 
It is known that any rational matrix $R(\la)\in\FF(\la)^{p\times m}$, regular or singular, can be written as
\begin{equation}\label{eq_realizR}
R(\la)=D(\la)+C(\la)A( \la)^{-1}B(\la)
\end{equation}
for some polynomial matrices  $A(\la)\in\FF[\la]^ {n\times n}$,
$B(\la)\in\FF[\la]^{n\times m}$, $C(\la)\in\FF[\la]^{p\times n}$ and $D(\la)\in\FF[\la]^{p\times m},$ where  $A(\la)$ is regular if $n>0$.
The case $n=0$ means that $R(\la)$ is a polynomial matrix, but polynomial matrices can be also be represented as in \eqref{eq_realizR} with $n>0$.
An expression of the form \eqref{eq_realizR} is called a realization of $R(\lambda)$.
We recall that realizations are not unique.

The polynomial matrix
\begin{equation}\label{eq_polsysmat}
P(\la)=\begin{bmatrix}
A(\la) & B(\la)\\
-C(\la) & D(\la)
\end{bmatrix}
\end{equation}
is said to be a polynomial system matrix of the rational matrix $R(\la)$ in \eqref{eq_realizR}.
The rational matrix $R(\la)$ is called the transfer function matrix of $P(\la),$ that is, $R(\la)$ is the Schur complement of $A(\la)$ in $P(\la)$. 
The matrix $A(\la)$ is called the state matrix of $P(\la).$ 
We notice that $P(\lambda)$ admits the block LDU factorization 
	\begin{equation*}\label{factorizationLDU}
P(\la)=\begin{bmatrix}
I_n & 0\\
-C(\la)A(\la)^{-1} & I_p
\end{bmatrix}\begin{bmatrix}
A(\la) & 0\\
0 & R(\la)
\end{bmatrix}\begin{bmatrix}
I_n & A(\la)^{-1}B(\la)\\
0 & I_m
\end{bmatrix}.
	\end{equation*}
Since $A(\la)$ is regular, the following rank property is satisfied:
\begin{equation}\label{ranks_rel}
\rank P(\la)= n + \rank R(\la).
\end{equation}

The polynomial system matrix $P(\la)$ in \eqref{eq_polsysmat}, with $n>0$, is said to have least order, or to be minimal, if the matrices 
\begin{equation} \label{mat_minimalidad}
 \begin{bmatrix} A(\la) \\ C(\la)\end{bmatrix}\quad \text{and} \quad 	\begin{bmatrix} A(\la) & B(\la) \end{bmatrix}
 \end{equation}
 have no eigenvalues in $\mathbb{F}$.
 In such case, we also say that the realization in \eqref{eq_realizR} is minimal.
  If the matrices in \eqref{mat_minimalidad} have no eigenvalues in a subset $\Omega\subseteq\mathbb{F}$ then $P(\la)$ is said to be minimal in $\Omega$ \cite[Definition 3.3]{local}. 
In such case, we also say that the realization \eqref{eq_realizR} is minimal in $\Omega.$ The main property of having minimality is that if the polynomial system matrix $P(\la)$ is minimal in $\Omega$, then the poles of $R(\la)$ in $\Omega$ are the zeros of the state matrix $A(\la)$ in $\Omega$, and the zeros of $R(\la)$ in $\Omega$ are the zeros of $P(\la)$ in $\Omega$ (see \cite[Theorem 3.7]{local}). Poles and zeros of a rational matrix are defined through the notion of the Smith-McMillan form \cite{McMi2}, other more recent references for the Smith-McMillan form of a rational matrix are \cite{Kailath80,Rosen70,Vard91}. In particular, the finite poles and zeros of a rational matrix are the roots in $\F$ of the polynomials that appear on the denominators and numerators, respectively, in its (finite) Smith-McMillan form. Poles and zeros at infinity of a rational matrix $R(\la)$ are defined as the poles and zeros at $\la=0$ of $R(1/\la)$ (see \cite{Kailath80}).

\subsection{Minimal bases and minimal indices}

In this section, we review the notions of minimal bases and minimal indices of rational subspaces \cite{forney}.

It is known that every rational vector subspace $\mathcal{V} \subseteq \mathbb{F}(\la)^n$ over the field $\mathbb{F}(\la)$ has bases consisting of polynomial vectors. 
We refer to such bases as polynomial bases.
Following \cite{forney}, a minimal  basis of $\mathcal{V}$ is a polynomial basis of $\mathcal{V}$ consisting of polynomial vectors whose sum of degrees is minimal among all polynomial bases of $\mathcal{V}$. 
Minimal bases are not unique, but the ordered list of degrees of the polynomial vectors in any minimal basis of $\mathcal{V}$ is always the same. 
Hence, these degrees are uniquely determined by $\mathcal{V}$ and are called the minimal indices of $\mathcal{V}$.

Let now $R(\la)\in\FF(\la)^{p\times m}$ be a rational matrix, we consider the rational vector subspaces:
\[
\begin{array}{l}
\mathcal{N}_r (R)=\{x(\la)\in\FF(\la)^{m\times 1}: R(\la)x(\la)=0\}, \text{ and}\\
\mathcal{N}_\ell (R)=\{y(\la)^{T}\in\FF(\la)^{1\times p}: y(\la)^T R(\la)=0\},

\end{array}
\]
which are called the right and left null-spaces of $R(\la)$, respectively. If $R(\lambda)$ is singular at least one of these null-spaces is non-trivial. If ${\cal N}_{r}(R)$ (resp. ${\cal N}_{\ell}(R)$) is non-trivial, it has minimal bases and minimal indices, which are called the right (resp. left) minimal bases and minimal indices of $R(\la)$. 
If $r=\rank R(\lambda)$, then $R(\la)$ has $p-r$ left minimal indices and $m-r$ right minimal indices.

Lemma \ref{Lemma: from R to P} establishes a linear map between the right (resp. left) nullspace of $R(\la)$ and the right (resp. left) nullspace of a polynomial system matrix $P(\la)$ of $R(\la)$.   
\begin{lem}\label{Lemma: from R to P} 
	Let 
	\[
	P(\la)=\begin{bmatrix}
	A(\la) & B(\la)\\
	-C(\la) & D(\la)
	\end{bmatrix}\in \FF[\la]^{(n+p)\times (n+m)}
	\]
	be a polynomial system matrix, with state matrix $A(\la)\in\efe[\lambda]^{n\times n},$ and transfer function matrix $R(\la) = D(\lambda)+C(\lambda)A(\lambda)^{-1}B(\lambda)\in \efe(\la)^{p\times m}$. Then, the following statements hold:
	\begin{itemize}
		\item[\rm(a)] The map
		\begin{align*}
		T_r \, : \, \mathcal{N}_r(R) & \longrightarrow \mathcal{N}_r(P) \\
		x(\lambda) & \longmapsto  \begin{bmatrix} -A(\lambda)^{-1}B(\lambda)\\I_m \end{bmatrix}x(\lambda)
		\end{align*}
		is a bijection between the right nullspaces of $R(\lambda)$ and $P(\lambda)$.
		\item[{\rm(b)}] The map
		\begin{align*}
		T_\ell \, : \, \mathcal{N}_\ell(R) & \longrightarrow \mathcal{N}_\ell(P) \\
		y(\lambda)^T & \longmapsto  y(\lambda)^T\begin{bmatrix}C(\lambda)A(\lambda)^{-1} & I_p \end{bmatrix}
		\end{align*}
		is a bijection between the left nullspaces of $R(\lambda)$ and $P(\lambda)$.
	\end{itemize}
\end{lem}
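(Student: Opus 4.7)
The plan is to reduce both parts to the block LDU factorization of $P(\la)$ that was recalled in the preliminaries, namely
\[
P(\la) = \underbrace{\begin{bmatrix} I_n & 0 \\ -C(\la)A(\la)^{-1} & I_p \end{bmatrix}}_{L(\la)} \underbrace{\begin{bmatrix} A(\la) & 0 \\ 0 & R(\la) \end{bmatrix}}_{\widetilde D(\la)} \underbrace{\begin{bmatrix} I_n & A(\la)^{-1}B(\la) \\ 0 & I_m \end{bmatrix}}_{U(\la)}.
\]
The factors $L(\la)$ and $U(\la)$ are invertible over $\F(\la)$, with inverses obtained simply by negating their off-diagonal blocks. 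Consequently, right-multiplication by $U(\la)^{-1}$ and left-multiplication by $L(\la)^{-1}$ induce canonical $\F(\la)$-linear bijections between the one-sided null-spaces of $P(\la)$ and those of the block-diagonal matrix $\widetilde D(\la)$.

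For part (a), a rational vector $z(\la)\in\F(\la)^{(n+m)\times 1}$ satisfies $P(\la)z(\la)=0$ if and only if $\widetilde D(\la)\bigl(U(\la)z(\la)\bigr)=0$. Since $A(\la)$ is regular, the right null-space of $\widetilde D(\la)$ consists exactly of vectors of the form $\bigl[\,0^T\ \ v(\la)^T\,\bigr]^T$ with $v(\la)\in\mathcal{N}_r(R)$. Therefore every $z(\la)\in\mathcal{N}_r(P)$ equals
\[
U(\la)^{-1}\begin{bmatrix} 0 \\ v(\la) \end{bmatrix} \;=\; \begin{bmatrix} -A(\la)^{-1}B(\la) \\ I_m \end{bmatrix} v(\la) \;=\; T_r(v(\la))
\]
for a unique $v(\la)\in\mathcal{N}_r(R)$, which shows simultaneously that $T_r$ takes values in $\mathcal{N}_r(P)$ and that it is a bijection.

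For part (b) the argument is dual: $y(\la)^T\in\mathcal{N}_\ell(P)$ iff $\bigl(y(\la)^T L(\la)\bigr)\widetilde D(\la)=0$, that is, iff $y(\la)^T L(\la)\in\mathcal{N}_\ell(\widetilde D)$. Regularity of $A(\la)$ forces this null-space to consist of rows $\begin{bmatrix} 0 & z(\la)^T \end{bmatrix}$ with $z(\la)^T\in\mathcal{N}_\ell(R)$, so
\[
y(\la)^T \;=\; \begin{bmatrix} 0 & z(\la)^T \end{bmatrix} L(\la)^{-1} \;=\; z(\la)^T \begin{bmatrix} C(\la)A(\la)^{-1} & I_p \end{bmatrix} \;=\; T_\ell(z(\la)^T)
\]
for a unique $z(\la)^T$, proving the claim.

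The only conceptual point to get right is recognizing that well-definedness, injectivity, and surjectivity all collapse into the single fact that the triangular factors $L(\la)$ and $U(\la)$ are invertible; once this is noted, the verification is mechanical, and uses only the regularity of $A(\la)$ to describe $\mathcal{N}_r(\widetilde D)$ and $\mathcal{N}_\ell(\widetilde D)$. I do not anticipate any genuine obstacle.
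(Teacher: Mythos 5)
Your proof is correct, and it follows a recognizably different route from the paper's. The paper verifies by a single block multiplication that $T_r$ maps $\mathcal{N}_r(R)$ into $\mathcal{N}_r(P)$, observes that $T_r$ is injective (its second block component is the identity), and then closes the argument with a dimension count: the rank identity \eqref{ranks_rel} together with rank--nullity gives $\dim\mathcal{N}_r(P)=\dim\mathcal{N}_r(R)$, so the injective linear map must be a bijection. You instead exploit the block LDU factorization $P(\la)=L(\la)\widetilde{D}(\la)U(\la)$ directly: since $L(\la)$ and $U(\la)$ are invertible over $\F(\la)$, they transport the nullspaces of $P(\la)$ onto those of $\diag(A(\la),R(\la))$, and the regularity of $A(\la)$ identifies $\mathcal{N}_r(\widetilde{D})$ as the set of vectors $\left[\begin{smallmatrix}0\\ v(\la)\end{smallmatrix}\right]$ with $v(\la)\in\mathcal{N}_r(R)$; this makes well-definedness, injectivity, and surjectivity a single statement. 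Your argument is moreover constructive: it exhibits the inverse map (extraction of the last $m$ entries, respectively the middle $p$ entries in part (b)), which is precisely the recovery rule exploited later in Remark \ref{rem: from R to P}, whereas the paper obtains surjectivity only abstractly from equality of dimensions. What the paper's route buys is brevity and no need to write down the inverses of the triangular factors; what yours buys is a complete explicit description of $\mathcal{N}_r(P)$ and $\mathcal{N}_\ell(P)$ with no rank--nullity argument. It is worth noting that the two proofs share the same ultimate source: the rank relation \eqref{ranks_rel} that the paper invokes is itself derived from the very LDU factorization you use.
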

\begin{proof}
	We only prove part (a) since part (b) can be proved in a similar way.
	
	First, we observe that the map $T_r$ is linear.		
	Second, we notice that for any vector $x(\lambda)$, we have
	\[
	\begin{bmatrix}
	A(\lambda) & B(\lambda) \\
	-C(\lambda) & D(\lambda)
	\end{bmatrix}
	\begin{bmatrix}
	-A(\lambda)^{-1}B(\lambda)x(\lambda) \\ x(\lambda)
	\end{bmatrix} =
	\begin{bmatrix}
	0 \\ R(\lambda)x(\lambda)
	\end{bmatrix},
	\] 
	which shows that $T_r$ maps vectors in the right nullspace of $R(\lambda)$ to vectors in the right nullspace of $P(\lambda)$.
	Finally, by \eqref{ranks_rel} and the rank-nullity theorem, we have 
	\begin{equation}\label{dimequal}
	\text{dim}\;\mathcal{N}_{r}(P)=\text{dim}\;\mathcal{N}_{r}(R).
	\end{equation} 
	Since the right nullspaces of $P(\lambda)$ and $R(\lambda)$ have the same dimension and the linear map $T_r$ is clearly injective, we conclude that the map $T_r$ is bijective.
\end{proof}

\begin{rem}\label{rem: from R to P}\rm
	Since the maps in Lemma \ref{Lemma: from R to P} are bijections, they preserve linear independence.
	Hence, one can recover a basis of the right (resp. left) nullspace of $R(\la)$ from a basis of the right (resp. left) nullspace of $P(\la)$, and conversely.
	For instance, from part-(a) in Lemma \ref{Lemma: from R to P}, we obtain that  if $\{x_{i}(\la)\}_{i=1}^{t}$ is a basis of $\mathcal{N}_{r}(R)$,  then $\left\{\left[\begin{smallmatrix}-A(\la)^{-1}B(\la)x_i(\la)\\x_i(\la)\end{smallmatrix}\right]\right\}_{i=1}^{t}$ is a basis of $\mathcal{N}_{r}(P)$.
	Conversely, if $\left\{\left[\begin{smallmatrix}y_i(\la)\\x_i(\la)\end{smallmatrix}\right]\right\}_{i=1}^{t}$ is a basis of $\mathcal{N}_{r}(P)$ then $\{x_{i}(\la)\}_{i=1}^{t}$ is a basis of $\mathcal{N}_{r}(R).$ 
\end{rem}

		In Section \ref{sec:recovery}, we will show how to recover  minimal bases and minimal indices of rational matrices from those of their linearizations constructed in this paper.
		Our results will need to assume some minimality conditions. 
		For other kind of results about recovery of minimal basis and minimal indices of rational matrices from polynomial system matrices we refer the reader to \cite{onminimal,VVK79,Ver81}.

\begin{rem}\rm
All the minimal bases appearing in this work are arranged as the columns or rows of matrix polynomials.
With a slight abuse of notation, we say that an $m\times n$ matrix polynomial with $m>n$ (resp. $m < n$) is a minimal basis if its columns (resp. rows) form a minimal basis of the rational subspace they span.
\end{rem}

The following definitions are useful for characterizing minimal bases. 
\begin{deff}
The $i$th column (resp. row) degree of a matrix polynomial $B(\lambda)$ is the degree of the $i$th column (resp. row) of $B(\lambda)$.
\end{deff}
\begin{deff}
Let $B(\lambda)\in\mathbb{F}[\lambda]^{m\times n}$ be a matrix polynomial with column (resp. row) degrees
$d_1, d_2,\hdots, d_n$ (resp. $d_1, d_2,\hdots, d_n$).
The highest column (resp. row) degree coefficient matrix of $B(\lambda)$, denoted by $B_{\rm hcd}$ (resp. $B_{\rm hrd}$), is the $m\times n$ constant matrix whose $j$th column (resp. row) is the coefficient of $\lambda^{d_j}$ in the $j$th column (resp. row) of $B(\lambda)$.
 The matrix polynomial $B(\lambda)$ is called column (resp. row) reduced if $B_{\rm hcd}$ (resp. $B_{\rm hrd}$) has full colum  (resp. row) rank.
\end{deff}

 Theorem \ref{Thm:charac min bases} states a characterization of minimal bases that will be very useful in the sequel (see \cite[Main Theorem]{forney} or \cite[Theorem 2.2]{BKL}). 	
\begin{theo}\label{Thm:charac min bases}
	The columns (resp. rows) of a polynomial matrix $B(\lambda)\in\F[\la]^{m\times n}$ with $m>n$ (resp. $m < n$) are a minimal basis of the subspace they span if and only if $B(\lambda)$ is column (resp. row) reduced and $B(\lambda_0)$ has full column (resp. row) rank for all $\lambda_0\in\mathbb{F}$.
\end{theo}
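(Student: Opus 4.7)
The plan is to establish both directions via the column degrees $d_1\le\cdots\le d_n$ of $B(\la)$, handling only the column case since the row case follows by transposition. The central tool will be the \emph{predictable degree property}: when $B(\la)$ is column reduced, every polynomial combination $\sum_i p_i(\la) b_i(\la)$ of the columns $b_i(\la)$ of $B(\la)$ has degree exactly $\max_i (d_i+\deg p_i)$, because the coefficient of the highest power picks out a linear combination of the columns of $B_{\rm hcd}$, which are independent by hypothesis.

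For the ($\Rightarrow$) direction I would argue by contraposition, constructing in each failure case a polynomial basis of the same subspace with strictly smaller sum of column degrees, contradicting the minimality of $B(\la)$. If $B(\la_0)$ has rank less than $n$ for some $\la_0\in\F$, pick constants $c_1,\ldots,c_n$, not all zero, with $\sum c_i b_i(\la_0)=0$, and let $k$ be the largest index (after reordering by non-decreasing degree) with $c_k\neq 0$. Then $v(\la):=\sum_{i\le k} c_i b_i(\la)$ vanishes at $\la_0$, so $v(\la)/(\la-\la_0)$ is a polynomial vector of degree strictly less than $d_k$, and replacing $b_k$ by it still yields a polynomial basis of the same span (since $b_k$ remains expressible in the new columns) but with smaller degree sum. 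If instead $B_{\rm hcd}$ is rank deficient, a nontrivial relation $\sum c_i\operatorname{lc}(b_i)=0$ gives a polynomial combination $\sum c_i\la^{d_k-d_i}b_i(\la)$ whose leading $\la^{d_k}$ terms cancel, and the same column-swap argument lowers the degree sum.

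For the ($\Leftarrow$) direction, assume $B(\la)$ is column reduced with $B(\la_0)$ of full column rank for every $\la_0\in\F$, and let $\widetilde{B}(\la)$ be any polynomial basis of the same rational subspace, with column degrees $\widetilde{d}_1,\ldots,\widetilde{d}_n$. There is a unique $M(\la)\in\F(\la)^{n\times n}$ with $\widetilde{B}(\la)=B(\la)M(\la)$, and the key observation is that $M(\la)$ is in fact polynomial: since $B(\la)$ has full column rank at every point of $\F$, the gcd of its maximal minors is a nonzero constant, so all invariant factors in the Smith form of $B(\la)$ equal $1$ and $B(\la)$ admits a polynomial left inverse $L(\la)$, giving $M(\la)=L(\la)\widetilde{B}(\la)$. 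The predictable degree property then yields, column-by-column, $\widetilde{d}_j=\max_i(d_i+\deg M_{ij})\ge d_i$ for every $i$ with $M_{ij}\neq 0$; since $\det M(\la)\not\equiv 0$, the Leibniz expansion provides a permutation $\sigma$ with $M_{\sigma(j),j}\neq 0$ for all $j$, and summing over $j$ gives $\sum_j \widetilde{d}_j\ge\sum_j d_{\sigma(j)}=\sum_j d_j$, which is minimality.

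The main obstacle I anticipate is justifying the polynomiality of $M(\la)$: this is the bridge from the pointwise rank hypothesis on $B(\la)$ to the clean degree bookkeeping in the final step, and without it the predictable-degree conclusion cannot be summed over the columns of $\widetilde{B}(\la)$. The Smith-form route above is the cleanest, but one could instead argue directly by chasing poles: any pole of $M(\la)$ at some $\la_0\in\F$ would, via the identity $B(\la)M(\la)=\widetilde{B}(\la)$ and the polynomiality of $\widetilde{B}(\la)$, force a nonzero vector in the kernel of $B(\la_0)$, contradicting the full column rank of $B(\la_0)$.
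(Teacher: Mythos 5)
Your proof is correct: the paper states this result without proof, citing \cite{forney} and \cite{BKL}, and your argument --- the exchange/division argument for necessity (replacing a column by $v(\la)/(\la-\la_0)$, resp.\ by the combination with cancelling leading coefficients, to strictly lower the degree sum) and, for sufficiency, the predictable degree property combined with the polynomiality of the change-of-basis matrix $M(\la)$ (justified via the Smith form or by pole-chasing, both valid here because $\F$ is algebraically closed, so full column rank of $B(\la_0)$ for all $\la_0\in\F$ forces the gcd of the $n\times n$ minors to be a nonzero constant) --- is precisely the classical proof of Forney's Main Theorem from the cited references. Nothing further is needed.
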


We, finally, introduce the notion of dual minimal bases \cite[Definition 2.5]{BKL}.
\begin{deff}\label{dual}  Let  $B(\la)\in\F[\la]^{p\times n}$ be a minimal basis with $p<n$. Another minimal basis $N(\la)\in\F[\la]^{q\times n}$ is said to be dual to $B(\la)$ if $p+q=n$ and $B(\la)N(\la)^{T}=0$. Then, we say that $N(\la)$ is a dual minimal basis of $B(\la)$ and vice versa.
\end{deff}

\subsection{Block minimal basis linearizations of matrix polynomials}

We begin this section by reviewing the classical definitions of linearization and strong linearization of matrix polynomials \cite{Lancaster}. 

Let $P(\lambda)\in\efe[\lambda]^{p\times m}$ be a polynomial matrix as in \eqref{eq:matrix poly}.
A linear polynomial matrix (also called a matrix pencil) $L(\lambda)= B \lambda + A$ is a linearization of $P(\lambda)$ if there exist unimodular matrices $U(\lambda)$ and $V(\lambda)$ such that
\[
U(\lambda)( B \lambda + A)V(\lambda) = 
\begin{bmatrix}
	P(\lambda) & 0 \\
	0 & I_s
\end{bmatrix},
\]
for some identity matrix $I_s$.
Furthermore, a linearization $L(\lambda)$ is strong if $ A \lambda + B$ is a linearization of the polynomial matrix
\begin{equation}\label{eq:reversal poly}
\rev_d P(\lambda) :=  P_0 \lambda^d + \cdots +  P_{d-1} \lambda + P_d,
\end{equation}
where $d$ is the grade of $P(\lambda)$.

We recall that a strong linearization $L(\lambda)$ of  $P(\lambda)$ preserves the finite and infinite elementary divisors, and the dimensions of the right and left nullspaces.

The polynomial matrix \eqref{eq:reversal poly} is called the reversal of $P(\lambda)$.

We now recall the notion of (strong) block minimal basis pencil. It will be our main tool for building linearizations of rational matrices in the sense of \cite{local}.
\begin{deff} {\rm \cite[Definition 3.1]{BKL}} \emph{((Strong) block minimal basis pencil)}. \label{def:minlinearizations} A block minimal basis pencil is a linear polynomial matrix over $\mathbb{F}$ with the following structure
	\begin{equation}
	\label{eq:minbaspencil}
	L(\la) =
	\left[
	\begin{array}{cc}
	M(\la) & K_2 (\la)^T \\
	K_1 (\lambda) &0
	\end{array}
	\right],
	\end{equation}
	where $K_1(\la)$ and $K_2(\la) $ are both minimal bases. 
Moreover, given a polynomial matrix $P(\la),$ it is said that $L(\la)$ is associated with $P(\la)$ if $$N_{2}(\la) M(\la) N_{1}(\la)^{T}=P(\la),$$ where $N_{1}(\la)$ and $N_{2}(\la)$ are minimal bases dual to $K_{1}(\la)$ and $K_{2}(\la),$ respectively. 
In addition, if $K_1(\la) $ (resp. $K_2(\la)$) is a minimal basis with all its row degrees equal to $1$ and with the row degrees of a minimal basis $N_1(\la)$ (resp. $N_2(\la) $) dual to $K_1(\la)$ (resp. $K_2(\la)$) all equal, then $L(\la)$ is called a strong block minimal basis pencil. 
\end{deff}
\begin{theo}{\rm \cite{BKL}}\label{thm:BMBP are linearizations}
A  minimal basis pencil $L(\lambda)$ as in \eqref{eq:minbaspencil} associated with a polynomial matrix $P(\lambda)$ is a  linearization of $P(\lambda)$. Moreover, if the block minimal basis $L(\lambda)$ is strong, then $L(\lambda)$ is a strong linearization of $P(\lambda)$ considered as a polynomial matrix of grade $\deg N_1(\lambda) + \deg N_2(\lambda) + 1$.
\end{theo}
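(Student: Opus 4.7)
My plan is to exhibit explicit unimodular row and column transformations that reduce $L(\la)$ to the block diagonal form $\diag(P(\la), I_s)$. The key tool is the following consequence of Theorem~\ref{Thm:charac min bases}: each minimal basis $K_i(\la)$ is row reduced and has full row rank at every $\la_0 \in \F$, so it admits a polynomial right inverse $\widehat{K}_i(\la)^T$; by correcting with the dual $N_i(\la)$ one can arrange polynomial matrices $\widehat{N}_i(\la)$ such that $\left[\begin{smallmatrix}K_i \\ N_i\end{smallmatrix}\right]$ is unimodular with inverse $[\widehat{K}_i^T \;\; \widehat{N}_i^T]$, i.e.\ $K_i \widehat{K}_i^T = I$, $N_i \widehat{N}_i^T = I$, $K_i \widehat{N}_i^T = 0$, and $N_i \widehat{K}_i^T = 0$.

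Using these data I would form unimodular matrices $V(\la)$ (built from $\widehat{K}_1, N_1$) and $U(\la)$ (built from $\widehat{K}_2, N_2$) so that in $U(\la) L(\la) V(\la)$ the block occupied by $K_1(\la)$ becomes an identity (through the pairing $K_1 \widehat{K}_1^T = I$), the block occupied by $K_2(\la)^T$ becomes an identity (through $K_2 \widehat{K}_2^T = I$), and the remaining block is $N_2(\la) M(\la) N_1(\la)^T = P(\la)$ by the associating hypothesis. A final round of block elimination, using the identity blocks as pivots to clear the off-diagonal contributions coming from interactions of $M(\la)$ with the various $\widehat{K}_i$, delivers $\diag(P(\la), I_s)$, showing $L(\la)$ is a linearization.

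For the strong linearization statement with grade $g := \deg N_1(\la) + \deg N_2(\la) + 1$, I would apply the same argument to the reversal $\rev_1 L(\la) = \la\, L(1/\la)$. The strong hypothesis — that each $K_i(\la)$ has all row degrees equal to $1$ and each $N_i(\la)$ has all row degrees equal — is precisely the condition ensuring that $\rev_1 K_i(\la)$ is again a minimal basis with $\rev_{\deg N_i} N_i(\la)$ as a dual minimal basis: the uniform row degrees keep the highest-row-degree coefficient matrix nonsingular under reversal and prevent any rank drop at $\la_0 = 0$. Then $\rev_1 L(\la)$ is itself a block minimal basis pencil, whose associated polynomial matrix $\rev_{\deg N_2} N_2 \cdot \rev_1 M \cdot (\rev_{\deg N_1} N_1)^T$ equals $\rev_g P(\la)$ by a direct degree count, so the already-proved linearization part applies.

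The hard part will be the reversal step. Without the uniform row-degree hypothesis, $\rev_1 K_i(\la)$ can fail either to be row reduced or to have full row rank at $\la_0 = 0$, in which case Theorem~\ref{Thm:charac min bases} no longer certifies it as a minimal basis; and the duality identity $(\rev_1 K_i)(\rev_{\deg N_i} N_i)^T = 0$ has to be verified by a careful degree-matching argument that uses the uniformity of the row degrees. Getting these reversal bookkeeping issues right, together with the final degree count identifying the product with $\rev_g P(\la)$, is where the strong hypothesis does real work and is the only genuine technical obstacle.
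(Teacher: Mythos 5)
Your overall strategy---unimodular reduction of $L(\la)$ to $\diag(P(\la),I_s)$ via dual-basis completions, then running the same reduction on $\rev_1 L(\la)$ for the strong statement---is exactly the route of \cite{BKL}, which this paper reuses in the proof of Theorem~\ref{thm:linearization} through Remark~\ref{rem_unimodular} and Lemma~\ref{lemma:unimodular}; your reversal bookkeeping and the grade count $g=\deg N_1(\la)+\deg N_2(\la)+1$ are also correct. However, your key tool is misstated, and as written it is false: the stacked matrix $\left[\begin{smallmatrix}K_i(\la)\\ N_i(\la)\end{smallmatrix}\right]$ is \emph{not} unimodular in general. Take $K(\la)=\begin{bmatrix}-1 & \la\end{bmatrix}$ with dual minimal basis $N(\la)=\begin{bmatrix}\la & 1\end{bmatrix}$; then
\begin{equation*}
\det\begin{bmatrix}-1 & \la\\ \la & 1\end{bmatrix}=-(1+\la^{2}),
\end{equation*}
which is non-constant and, since $\F$ is algebraically closed, vanishes at some $\la_0\in\F$. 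Consequently the four relations you posit ($K_i\widehat{K}_i^T=I$, $N_i\widehat{N}_i^T=I$, $K_i\widehat{N}_i^T=0$, $N_i\widehat{K}_i^T=0$) cannot be satisfied simultaneously by polynomial matrices---they would exhibit a polynomial inverse of that stack. In particular no polynomial right inverse of $K_i$ annihilated by $N_i$ exists in general, so the transformations $U(\la)$, $V(\la)$ you build from these relations need not be unimodular, and your reduction fails at the first step.

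The repair is the completion quoted in Remark~\ref{rem_unimodular} (that is, \cite[Theorem~2.10]{BKL}): one completes $K_i$ by \emph{other} rows $\widehat{K}_i(\la)$, not by $N_i(\la)$, to a unimodular matrix $U_i(\la)=\left[\begin{smallmatrix}K_i(\la)\\ \widehat{K}_i(\la)\end{smallmatrix}\right]$ with $U_i(\la)^{-1}=\begin{bmatrix}\widehat{N}_i(\la)^T & N_i(\la)^T\end{bmatrix}$, so that $K_i\widehat{N}_i^T=I$, $K_iN_i^T=0$, $\widehat{K}_iN_i^T=I$, $\widehat{K}_i\widehat{N}_i^T=0$: note the pairing is $K_i$ with $\widehat{N}_i$ and $\widehat{K}_i$ with $N_i$, the opposite of yours. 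Multiplying $L(\la)$ on the right by $\diag(U_1(\la)^{-1},I)$ and on the left by $\diag(U_2(\la)^{-T},I)$ sends the $K_1(\la)$ block to $\begin{bmatrix}I & 0\end{bmatrix}$, the $K_2(\la)^T$ block to $\left[\begin{smallmatrix}I\\ 0\end{smallmatrix}\right]$, and places $N_2(\la)M(\la)N_1(\la)^T=P(\la)$ in the appropriate corner; block elimination with the identity pivots (exactly the pattern of Lemma~\ref{lemma:unimodular}) then yields $\diag(P(\la),I_s)$, after which your reversal argument with $\rev_1 K_i(\la)$ and $\rev_{\deg N_i}N_i(\la)$ goes through unchanged. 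A partial salvage of your version does exist: for \emph{any} polynomial right inverse $\widehat{K}_1(\la)^T$ of $K_1(\la)$, the matrix $\begin{bmatrix}\widehat{K}_1(\la)^T & N_1(\la)^T\end{bmatrix}$ is in fact unimodular, because two polynomial right inverses of $K_1(\la)$ differ by $N_1(\la)^TW(\la)$ with $W(\la)$ polynomial (Forney); but this needs an argument you did not supply, and it still does not rescue the relation $N_1\widehat{K}_1^T=0$ on which your elimination step leans.
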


\begin{rem}\rm
In this paper, we only consider the so-called degenerate (strong) block minimal basis pencils, that is, (strong) block minimal basis pencils of the form 
\begin{equation}
\label{eq:minbaspencilonecolumn}
L(\la) =
\left[
\begin{array}{c}
M(\la)  \\
K (\lambda) 
\end{array}
\right],
\end{equation}
where $K(\lambda)$ is a minimal basis. The linear polynomial matrix \eqref{eq:minbaspencilonecolumn} is a (strong) linearization of the polynomial matrix
\[
P(\lambda) = M(\lambda) N(\lambda)^T
\]
considered as a polynomial of grade $1+\deg N(\lambda)$, where $N(\lambda)$ is a minimal basis dual to $K(\lambda)$.
If $d:=\deg N(\lambda)+1$ and $P(\lambda)$ is of size $m\times n$, then $M(\lambda)$ and $K(\lambda)$ in \eqref{eq:minbaspencilonecolumn} are, respectively,  $m\times dn$ and $(d-1)n\times dn$ linear polynomial matrices \cite{BKL}.
\end{rem}

\section{Linearizations in a set $\Omega\subseteq\mathbb{F}$}\label{sec:local}

In Definition \ref{def:local lin}, we recall the notion of local linearization of a rational matrix which has been recently introduced in \cite{local}.
\begin{deff}[Local  linearization of rational matrices] {\rm \cite{local}}
\label{def:local lin}
Let $R(\lambda)\in\mathbb{F}(\lambda)^{p\times m}$ be a rational matrix.
Let
\[
\mathcal{L}(\lambda) = 
\left[\begin{array}{c|c}
A_1 \la +A_0 &B_1 \la +B_0\\\hline \phantom{\Big|} -(C_1 \la +C_0)&D_1 \la +D_0
\end{array}\right]\in\mathbb{F}[\lambda]^{(n+(p+s))\times(n+(m+s))}
\] 
be a linear polynomial system matrix with transfer function matrix
\[
\widehat{R}(\lambda) = D_1\lambda+D_0+(C_1\lambda+C_0)(A_1\lambda+A_0)^{-1}(B_1\lambda+B_0) \in \mathbb{F}[\lambda]^{(p+s)\times (m+s)}.
\]
Then, $\mathcal{L}(\lambda)$ is a linearization of $R(\lambda)$ in a set $\Omega\subseteq\mathbb{F}$ if the following conditions hold:
\begin{enumerate}
\item[\rm(a)] $\mathcal{L}(\lambda)$ is minimal in $\Omega$, and
\item[\rm(b)] there exist rational matrices $R_1(\lambda)$ and $R_2(\lambda)$ both regular in $\Omega$ such that
\[
R_1(\lambda)\begin{bmatrix}
R(\lambda) & 0 \\
0 & I_s
\end{bmatrix} R_2(\lambda) = \widehat{R}(\lambda).
\]
\end{enumerate}
\end{deff}
Theorem \ref{thm:spectral charac} presents the spectral characterization of local linearizations.
In words, this theorem says that the local spectral information (in terms of zero and pole elementary divisors) of $R(\lambda)$ in $\Omega$ can be recovered from the local spectral information (in terms of elementary divisors) of any of its linearizations $\mathcal{L}(\lambda)$ in $\Omega$.
\begin{theo}[Spectral characterization of local linearizations]{\rm \cite{local}}
\label{thm:spectral charac}
Let $R(\lambda)\in\mathbb{F}(\lambda)^{p\times m}$ be a rational matrix.
Let
\[
\mathcal{L}(\lambda) = 
\left[\begin{array}{c|c}
A_1 \la +A_0 &B_1 \la +B_0\\\hline \phantom{\Big|} -(C_1 \la +C_0)&D_1 \la +D_0
\end{array}\right]\in\mathbb{F}[\lambda]^{(n+(p+s))\times(n+(m+s))}
\] 
be a linear polynomial system matrix, with state matrix $A_1\la + A_0$, minimal in $\Omega\subseteq\F$.
Then, $\mathcal{L}(\lambda)$ is a linearization of $R(\lambda)$ in $\Omega$ if and only if
\begin{enumerate}
\item[\rm(a)] $\rank\mathcal{L}(\lambda) = \rank R(\lambda)+n+s$,
\item[\rm(b)] the pole elementary divisors of $R(\lambda)$ in $\Omega$ are the elementary divisors of $A_1\lambda+A_0$ in $\Omega$, and
\item[\rm(c)] the zero elementary divisors of $R(\lambda)$ in $\Omega$ are the elementary divisors of $\mathcal{L}(\lambda)$ in $\Omega$.
\end{enumerate}
\end{theo}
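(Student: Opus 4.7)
The plan is to reduce both directions to two facts about the polynomial system matrix $\mathcal{L}(\la)$: its block LDU factorization, and the characterization of poles and zeros of transfer functions of polynomial system matrices minimal in $\Omega$ given by Theorem 3.7 of \cite{local}. Concretely, the LDU factorization of $\mathcal{L}(\la)$ (the analogue of the one displayed right after \eqref{factorizationLDU} applied to $\mathcal{L}(\la)$ instead of $P(\la)$) exhibits $\mathcal{L}(\la)$ as a product of a unimodular matrix with $\mathrm{diag}(A_1\la+A_0,\widehat R(\la))$ and another unimodular matrix. Minimality in $\Omega$ gives, by Theorem 3.7 of \cite{local}, two identifications: first, the pole elementary divisors of $\widehat R(\la)$ in $\Omega$ are the elementary divisors of the state matrix $A_1\la+A_0$ in $\Omega$; second, the zero elementary divisors of $\widehat R(\la)$ in $\Omega$ are the elementary divisors of $\mathcal{L}(\la)$ in $\Omega$. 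The rank relation \eqref{ranks_rel} also gives $\rank \mathcal{L}(\la)=n+\rank\widehat R(\la)$.

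For the $(\Rightarrow)$ direction, assume $\mathcal{L}(\la)$ is a linearization of $R(\la)$ in $\Omega$. Then there exist $R_1(\la),R_2(\la)$ regular in $\Omega$ such that $R_1(\la)\mathrm{diag}(R(\la),I_s)R_2(\la)=\widehat R(\la)$. Because $R_1,R_2$ and their inverses have no poles in $\Omega$, they preserve the local Smith--McMillan form at every $\la_0\in\Omega$, so $\widehat R(\la)$ and $\mathrm{diag}(R(\la),I_s)$ share the same pole and zero elementary divisors in $\Omega$. The identity block contributes neither poles nor zeros, and $\rank \widehat R(\la)=\rank R(\la)+s$. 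Combining this with the two identifications in the previous paragraph immediately yields (a), (b) and (c).

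For the $(\Leftarrow)$ direction, assume (a), (b) and (c). The same two identifications show that $\widehat R(\la)$ and $\mathrm{diag}(R(\la),I_s)$ have the same rank and the same pole and zero elementary divisors in $\Omega$. Equivalently, their local Smith--McMillan forms agree at every $\la_0\in\Omega$. To conclude, one has to upgrade this pointwise invariance to a global equivalence over $\Omega$, i.e., to exhibit a single pair $(R_1,R_2)$ of rational matrices regular in $\Omega$ realising $R_1(\la)\mathrm{diag}(R(\la),I_s)R_2(\la)=\widehat R(\la)$. This is carried out by taking a finite reduction in $\Omega$ to a local Smith--McMillan normal form (using that the set of poles and zeros in $\Omega$ that we must match is a finite list of points) and then composing the local transformations; since only finitely many points of $\Omega$ are relevant, the resulting $R_1,R_2$ can be chosen so that neither they nor their inverses have poles in $\Omega$.

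The main obstacle is precisely this last step: passing from pointwise agreement of local Smith--McMillan forms to a globally defined pair of equivalence matrices regular in $\Omega$. The forward direction and the computation of invariants via the LDU factorization are essentially bookkeeping, but the construction of $R_1(\la)$ and $R_2(\la)$ requires an explicit reduction argument and care to ensure that no spurious poles or zeros are introduced inside $\Omega$. In practice, one would invoke the corresponding statement already established in \cite{local} rather than redo this construction from scratch.
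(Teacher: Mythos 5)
The first thing to note is that the paper does not prove Theorem \ref{thm:spectral charac} at all: it is quoted verbatim from \cite{local}, so there is no internal proof to compare you against, and your attempt has to be judged on its own merits. Your forward direction is correct and essentially complete: since $R_1(\la)$ and $R_2(\la)$ and their inverses are defined and invertible at every $\la_0\in\Omega$, pre- and post-multiplication by them preserves the local Smith--McMillan form at each point of $\Omega$; combining this with the identifications from \cite[Theorem 3.7]{local} (pole elementary divisors of $\widehat{R}(\la)$ in $\Omega$ equal elementary divisors of $A_1\la+A_0$, zero elementary divisors equal those of $\mathcal{L}(\la)$) and with the rank count $\rank\mathcal{L}(\la)=n+\rank\widehat{R}(\la)$ from \eqref{ranks_rel} gives (a)--(c). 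One slip: the outer factors in the block LDU factorization are \emph{not} unimodular --- they involve $(A_1\la+A_0)^{-1}$ and are rational matrices with determinant one, regular only off the eigenvalues of the state matrix. Since you never actually use the LDU beyond motivation, this is cosmetic, but as stated it is false.

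The genuine gap is exactly where you flag it, and your sketched repair does not work as described: ``composing the local transformations'' is not an argument, because a transformation that normalizes the structure at one point $\la_0\in\Omega$ will in general introduce poles or lose invertibility at the other relevant points, and there is no canonical way to splice finitely many local equivalences into one. The correct repair is global rather than local. Write $\widehat{R}(\la)=U_1(\la)D_1(\la)V_1(\la)$ and $\diag(R(\la),I_s)=U_2(\la)D_2(\la)V_2(\la)$ in global Smith--McMillan form, with $U_i,V_i$ unimodular and $D_i=\left[\begin{smallmatrix}\Delta_i(\la) & 0\\ 0 & 0\end{smallmatrix}\right]$, where $\Delta_i(\la)$ is $r\times r$ with $r=\rank R(\la)+s$ (equality of the two ranks is your condition (a) together with \eqref{ranks_rel}). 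The divisibility chains of the invariant rational functions force their orders at each $\la_0$ to be sorted, so they coincide with the local Smith--McMillan orders; hence (b) and (c), routed through \cite[Theorem 3.7]{local}, imply that every diagonal entry of $\Delta_1(\la)\Delta_2(\la)^{-1}$ has order zero at every $\la_0\in\Omega$, i.e.\ $\diag\left(\Delta_1(\la)\Delta_2(\la)^{-1},I_{p+s-r}\right)$ is regular in $\Omega$. Then $R_1(\la):=U_1(\la)\diag\left(\Delta_1(\la)\Delta_2(\la)^{-1},I_{p+s-r}\right)U_2(\la)^{-1}$ and $R_2(\la):=V_2(\la)^{-1}V_1(\la)$ are regular in $\Omega$ and satisfy $R_1(\la)\diag(R(\la),I_s)R_2(\la)=\widehat{R}(\la)$, closing the converse. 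Finally, be careful with your last sentence: invoking ``the corresponding statement already established in \cite{local}'' is legitimate only if it refers to the local-equivalence characterization there (a separate lemma); invoking Theorem \ref{thm:spectral charac} itself would be circular, and as written your deferral is ambiguous between the two.
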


In Theorem \ref{thm:linearization}, we construct (local) linearizations for rational matrices that are represented with realizations as in \eqref{eq_realizR}.
To prove Theorem \ref{thm:linearization}, we will use Lemma \ref{lemma:unimodular}, whose simple proof is omitted.

\begin{lem}\label{lemma:unimodular}
	A polynomial matrix of the form
	\[
	\begin{bmatrix}
	X(\lambda) & A(\lambda) & Y(\lambda) & B(\lambda) \\
	I_{s} & 0 & 0 & 0 \\
	Z(\lambda) & -C(\lambda) & W(\lambda) & D(\lambda) \\
	0 & 0 & I_{t} & 0 
	\end{bmatrix}
	\]
	is unimodularly equivalent to 
	$\diag\left( 
	\left[\begin{smallmatrix}
	A(\lambda) & B(\lambda) \\
	C(\lambda) & D(\lambda)
	\end{smallmatrix}\right], I_{s+t}
	\right)$.
\end{lem}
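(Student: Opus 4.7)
The plan is to reduce the given $4\times 4$ block matrix to the claimed block diagonal form by a sequence of elementary unimodular row and column operations, exploiting the two identity blocks $I_s$ and $I_t$ to clear out the polynomial ``nuisance'' blocks $X(\lambda), Y(\lambda), Z(\lambda), W(\lambda)$.

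First, I would use the second block row $[\,I_s \mid 0 \mid 0 \mid 0\,]$ to annihilate the first block column outside of $I_s$. Concretely, left-multiplication by the unimodular matrix
\[
U_1(\lambda) = \begin{bmatrix} I & -X(\lambda) & 0 & 0 \\ 0 & I_s & 0 & 0 \\ 0 & -Z(\lambda) & I & 0 \\ 0 & 0 & 0 & I_t \end{bmatrix}
\]
(unimodular because it is block upper/lower triangular with identities on the diagonal) eliminates $X(\lambda)$ from the $(1,1)$-block and $Z(\lambda)$ from the $(3,1)$-block. Symmetrically, the fourth block row $[\,0 \mid 0 \mid I_t \mid 0\,]$ can be used to kill $Y(\lambda)$ and $W(\lambda)$ by a second analogous left multiplication by a unimodular matrix $U_2(\lambda)$, after which the matrix becomes
\[
\begin{bmatrix}
0 & A(\lambda) & 0 & B(\lambda) \\
I_s & 0 & 0 & 0 \\
0 & -C(\lambda) & 0 & D(\lambda) \\
0 & 0 & I_t & 0
\end{bmatrix}.
\]

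Next I would apply block permutations, which are constant (hence unimodular) matrices, to move the rows and columns into block diagonal form: reorder the block columns as $(2,4,1,3)$ and the block rows as $(1,3,2,4)$. This produces
\[
\diag\!\left( \begin{bmatrix} A(\lambda) & B(\lambda) \\ -C(\lambda) & D(\lambda) \end{bmatrix},\; I_s,\; I_t \right),
\]
and finally $\diag(I_s, I_t) = I_{s+t}$, yielding the claimed form (up to the cosmetic sign on the $C$-block, which can be absorbed by multiplying the appropriate block row by $-1$, another unimodular operation, if one wishes to display $C$ without the minus sign as in the statement).

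The proof is essentially routine once one commits to the bookkeeping, so I do not expect a real obstacle; the only thing to be careful about is to verify that each of $U_1(\lambda)$, $U_2(\lambda)$, and the two permutation matrices is genuinely unimodular (they all have determinant $\pm 1$ since they are triangular with identity diagonal blocks, or permutation matrices), and that the products are performed in the correct order so that no later step reintroduces a block that an earlier step has already eliminated.
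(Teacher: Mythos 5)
Your block-operation computation is correct through the permutation step, and since the paper explicitly omits the proof of this lemma as ``simple,'' your route -- clearing $X(\lambda),Z(\lambda)$ with the $I_s$ row, clearing $Y(\lambda),W(\lambda)$ with the $I_t$ row, then permuting block rows and columns -- is evidently the intended argument; there is no paper proof to diverge from. Up to that point you correctly arrive at
$\diag\left( \left[\begin{smallmatrix} A(\lambda) & B(\lambda) \\ -C(\lambda) & D(\lambda) \end{smallmatrix}\right], I_s, I_t \right)$,
with the minus sign on the $C$-block intact.

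The final parenthetical claim, however, is a genuine error: the sign is not cosmetic and cannot be absorbed. Multiplying the block row $\begin{bmatrix} -C(\lambda) & D(\lambda) \end{bmatrix}$ by $-1$ produces $\begin{bmatrix} C(\lambda) & -D(\lambda) \end{bmatrix}$, not $\begin{bmatrix} C(\lambda) & D(\lambda) \end{bmatrix}$; chasing the sign with a column scaling merely moves it onto $B(\lambda)$. In fact no unimodular equivalence can remove it in general: taking $A(\lambda)=D(\lambda)=\lambda$ and $B=C=1$, the blocks $\left[\begin{smallmatrix} \lambda & 1 \\ -1 & \lambda \end{smallmatrix}\right]$ and $\left[\begin{smallmatrix} \lambda & 1 \\ 1 & \lambda \end{smallmatrix}\right]$ have determinants $\lambda^2+1$ and $\lambda^2-1$, which are not associates, so the two matrices have different Smith forms. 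What your (correct) computation actually reveals is a sign typo in the lemma as stated: the target should be $\diag\left( \left[\begin{smallmatrix} A(\lambda) & B(\lambda) \\ -C(\lambda) & D(\lambda) \end{smallmatrix}\right], I_{s+t} \right)$, i.e.\ the polynomial system matrix in the sign convention of \eqref{eq_polsysmat}. That corrected form is also exactly what is used in the proof of Theorem \ref{thm:linearization}, where the lemma is applied to the matrix \eqref{eq:unimodular1} (whose $(3,2)$ block is $-C(\lambda)$) to conclude equivalence with $\diag(P(\lambda),I)$ for the system matrix $P(\lambda)$ of $R(\lambda)$; the inline definition of $P(\lambda)$ there carries the same sign slip. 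So your proof, with the last sentence deleted rather than repaired, proves the intended statement; as literally written with $+C(\lambda)$, the statement is false and your proposed fix would not rescue it.
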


\begin{theo}\label{thm:linearization} 
Let $R(\la)\in\F(\la)^{p\times m}$ be a rational matrix expressed in the form $R(\la)=D(\la)+C(\la)A(\la)^{-1}B(\la)$, for some nonsingular polynomial matrix  $A(\la)\in\FF[\la]^ {n\times n}$, and polynomial matrices
	$B(\la)\in\FF[\la]^{n\times m}$, $C(\la)\in\FF[\la]^{p\times n}$ and $D(\la)\in\FF[\la]^{p\times m}$. 
	Let 
	\begin{equation}
	L_{A}(\la) =
	\left[
	\begin{array}{c}
	M_{A}(\la)  \\
	K_{A} (\lambda) 
	\end{array}
	\right] \quad \text{ and } \quad L_{D}(\la) =
	\left[
	\begin{array}{c}
	M_{D}(\la)  \\
	K_{D} (\lambda) 
	\end{array}
	\right]
	\end{equation}
	 be block minimal basis linearizations of $A(\la)$ and $D(\la),$ respectively.
	 Let $N_{A}(\la)$ and $N_{D}(\la)$ be minimal bases dual to $K_{A}(\la)$ and $K_{D}(\la),$ respectively. 
	 Consider linear polynomial matrices $M_{C}(\la)$ and $M_{B}(\la)$ such that 
	\begin{equation}
	M_{C}(\la)N_{A}(\la)^{T}=C(\la)\quad \text{ and }\quad\ M_{B}(\la)N_{D}(\la)^{T}=B(\la), 
	\end{equation} 
	and the linear polynomial system matrix  
	\begin{equation} \label{eq:linearization}
	\mathcal{L}(\la) = \left[
	\begin{array}{c|c}
	M_A(\la) & \phantom{a} M_B(\la) \phantom{a} \\
	K_A(\la) & 0 \\ \hline \phantom{\Big|}
	-M_C(\la)  \phantom{\Big|}& M_D(\la) \\
	0 & K_D (\la)
	\end{array}
	\right],
	\end{equation} 
with state matrix $L_{A}(\la).$ If the matrices
\begin{equation} \label{eq:mat_minimality}
	\begin{bmatrix} A(\la) \\ C(\la)\end{bmatrix}\quad \text{and} \quad 	\begin{bmatrix} A(\la) & B(\la) \end{bmatrix}
\end{equation}
have no eigenvalues in $\Omega\subseteq\mathbb{F}$, then $\mathcal{L}(\la) $ is a linearization of $R(\la)$ in $\Omega$.
\end{theo}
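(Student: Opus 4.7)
To establish Theorem~\ref{thm:linearization} it suffices to verify the two defining conditions of a linearization in $\Omega$ (Definition~\ref{def:local lin}): minimality of $\mathcal{L}(\lambda)$ in $\Omega$, and existence of rational matrices regular in $\Omega$ relating the transfer function $\widehat{R}(\lambda)$ of $\mathcal{L}(\lambda)$ to $\mathrm{diag}(R(\lambda),I_s)$. My plan is to prove minimality by a direct calculation with the minimal-basis data, and to handle the transfer-function condition \emph{indirectly} through the spectral characterization in Theorem~\ref{thm:spectral charac}; once minimality is in hand, that theorem reduces the task to matching ranks and elementary divisors. I obtain the matching by establishing a \emph{global} unimodular equivalence
\[ \mathcal{L}(\lambda)\sim\mathrm{diag}(P(\lambda),I_t),\qquad P(\lambda)=\begin{bmatrix}A(\lambda)&B(\lambda)\\-C(\lambda)&D(\lambda)\end{bmatrix}, \]
with $t=(d_A-1)n+(d_D-1)m$ and $d_A=1+\deg N_A(\lambda)$, $d_D=1+\deg N_D(\lambda)$ the grades of the given block minimal basis linearizations.

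\emph{Minimality.} Writing $\mathbf{B}=\bigl[\begin{smallmatrix}M_B\\0\end{smallmatrix}\bigr]$ and $\mathbf{C}=\bigl[\begin{smallmatrix}M_C\\0\end{smallmatrix}\bigr]$, I must check that $\bigl[\begin{smallmatrix}L_A(\lambda_0)\\ \mathbf{C}(\lambda_0)\end{smallmatrix}\bigr]$ has full column rank and $\bigl[\begin{smallmatrix}L_A(\lambda_0)&\mathbf{B}(\lambda_0)\end{smallmatrix}\bigr]$ has full row rank for every $\lambda_0\in\Omega$. The key observation is that $K_AN_A^T=0$ together with Theorem~\ref{Thm:charac min bases} (applied to both $K_A$ and $N_A$) identifies the kernel of $K_A(\lambda_0)$ with the column span of $N_A(\lambda_0)^T$ for every $\lambda_0\in\mathbb{F}$. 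Any $v$ with $K_A(\lambda_0)v=0$ therefore has the form $v=N_A(\lambda_0)^Tw$, and the remaining conditions $M_A(\lambda_0)v=0$, $M_C(\lambda_0)v=0$ become $A(\lambda_0)w=0$, $C(\lambda_0)w=0$ via $M_AN_A^T=A$ and $M_CN_A^T=C$; the assumption \eqref{eq:mat_minimality} then forces $w=0$. A symmetric left-kernel argument, using $K_D,N_D$, the identity $M_BN_D^T=B$, and the assumption on $[A\ B]$, yields the full-row-rank condition.

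\emph{Unimodular equivalence.} Since $N_A^T$ and $N_D^T$ have full column rank at every point of $\mathbb{F}$ (Theorem~\ref{Thm:charac min bases}), the polynomial-matrix extension lemma provides unimodular completions $V_A=[\,N_A^T\ W_A\,]$ and $V_D=[\,N_D^T\ W_D\,]$. Direct computation gives
\[ L_AV_A=\begin{bmatrix}A&M_AW_A\\0&K_AW_A\end{bmatrix},\qquad L_DV_D=\begin{bmatrix}D&M_DW_D\\0&K_DW_D\end{bmatrix}, \]
and since $K_A(\lambda_0)$ and $K_D(\lambda_0)$ have full row rank for every $\lambda_0\in\mathbb{F}$, a standard rank argument forces the square polynomial matrices $K_AW_A$ and $K_DW_D$ to be unimodular. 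Post-multiplying $\mathcal{L}(\lambda)$ by $V(\lambda):=\mathrm{diag}(V_A,V_D)$, the identities $M_AN_A^T=A$, $M_CN_A^T=C$, $M_BN_D^T=B$, $M_DN_D^T=D$ make the leading columns of each block reproduce the entries of $P(\lambda)$, while the trailing columns exhibit the unimodular pieces $K_AW_A$ and $K_DW_D$ in block-triangular positions. Unimodular row operations driven by these two unimodular blocks then annihilate all off-diagonal entries, and a final block permutation produces $U(\lambda)\,\mathcal{L}(\lambda)\,V(\lambda)=\mathrm{diag}(P(\lambda),I_t)$.

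\emph{Conclusion.} Hypothesis \eqref{eq:mat_minimality} is exactly the statement that $P(\lambda)$ is a minimal polynomial system matrix of $R(\lambda)$ in $\Omega$, so, as recalled at the end of Section~\ref{prelim}, the pole (resp.\ zero) elementary divisors of $R(\lambda)$ in $\Omega$ coincide with the elementary divisors of $A(\lambda)$ (resp.\ $P(\lambda)$) in $\Omega$. Since $L_A$ is a linearization of $A$ by Theorem~\ref{thm:BMBP are linearizations}, its elementary divisors in $\Omega$ match those of $A$, verifying Theorem~\ref{thm:spectral charac}(b); the equivalence $\mathcal{L}\sim\mathrm{diag}(P,I_t)$ matches the elementary divisors of $\mathcal{L}$ in $\Omega$ with those of $P$ in $\Omega$, verifying (c); and taking ranks in the same equivalence gives $\mathrm{rank}\,\mathcal{L}=\mathrm{rank}\,P+t=(n+\mathrm{rank}\,R)+(d_A-1)n+(d_D-1)m=d_An+(d_D-1)m+\mathrm{rank}\,R$, verifying (a) with $s=(d_D-1)m$. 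Together with the minimality step, Theorem~\ref{thm:spectral charac} delivers the conclusion. The main technical hurdle is the construction of the unimodular equivalence: the bookkeeping with four block-columns is delicate, and the whole argument rests on $K_AW_A$ and $K_DW_D$ being \emph{unimodular} rather than merely invertible over $\mathbb{F}(\lambda)$, so that the cleanup proceeds by polynomial row operations rather than rational ones that could acquire poles inside $\Omega$.
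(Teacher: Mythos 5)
Your proposal is correct and takes essentially the same route as the paper's own proof: both verify the hypotheses of the spectral characterization in Theorem~\ref{thm:spectral charac} by establishing a unimodular equivalence $\mathcal{L}(\lambda)\sim\diag\left(P(\lambda),I_{n\rho_A+m\rho_D}\right)$ with $P(\lambda)=\left[\begin{smallmatrix}A(\lambda)&B(\lambda)\\-C(\lambda)&D(\lambda)\end{smallmatrix}\right]$, deducing the pole data from $L_A(\lambda)$ being a linearization of $A(\lambda)$, the zero data from minimality of $P(\lambda)$ in $\Omega$, and the rank condition by counting. The only differences are cosmetic: you build the equivalence from a generic unimodular completion $[\,N_i(\lambda)^T\ W_i(\lambda)\,]$ and prove by hand that the pivot blocks $K_AW_A$ and $K_DW_D$ are unimodular, whereas the paper uses the specific completion $U_i(\lambda)^{-1}=[\,\widehat{N}_i(\lambda)^T\ N_i(\lambda)^T\,]$ from \cite[Theorem 2.10]{BKL}, for which these pivots are identities, together with Lemma~\ref{lemma:unimodular}; likewise you check minimality in $\Omega$ by a direct kernel argument at each $\lambda_0$, while the paper reads it off the transformed pencil \eqref{eq:unimodular1}.
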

\begin{rem}\label{rem_unimodular}\rm Before giving the proof of Theorem \ref{thm:linearization}, we recall that there exist unimodular polynomial matrices of the form
	\begin{equation}\label{eq:embedding}
	U_i(\lambda) =
	\begin{bmatrix}
	K_i(\lambda) \\ \widehat{K}_i(\la)
	\end{bmatrix}
	\quad \mbox{and} \quad
	U_i(\lambda)^{-1}=
	\begin{bmatrix}
	\widehat{N}_i(\lambda)^T & N_i(\lambda)^T
	\end{bmatrix},
	\end{equation}
	for $i\in\{A,D\}$; see \cite[Theorem 2.10]{BKL}.
\end{rem}
\begin{proof}
Throughout the proof, we use the notation $\rho_A:=\deg N_{A}(\la)$ and $\rho_D:=\deg N_{D}(\la)$.



To prove that $\mathcal{L}(\lambda)$ is a linearization of $R(\lambda)$ in $\Omega$, we will use the characterization of local linearizations in Theorem \ref{thm:spectral charac}.

Let $P(\lambda):=\left[ \begin{smallmatrix} A(\lambda) & B(\lambda) \\ C(\lambda) & D(\lambda) \end{smallmatrix} \right]$ be a polynomial system matrix of $R(\lambda)$.
Notice that $P(\lambda)$ is minimal in $\Omega$ by hypothesis.
First, we have
\begin{alignat*}{2}
\mbox{pole elem. div. of $R(\lambda)$ in $\Omega$} =& \mbox{elem. div. of $A(\lambda)$ in $\Omega$} \quad  &&\mbox{($P$ is minimal in $\Omega$)}\\
=&\mbox{elem. div. of $L_A(\lambda)$ in $\Omega$} \quad &&\mbox{($L_A$ is a linearization of $A$)}.
\end{alignat*}
Hence, the pole elementary divisors of $R(\lambda)$ in $\Omega$ are equal to the elementary divisors of $L_A(\lambda)$ in $\Omega$. 

Second, we consider Remark \ref{rem_unimodular} and notice $K_A(\lambda)\widehat{N}_A(\lambda)^T = nI_{\rho_A}$ and $K_D(\lambda)\widehat{N}_D(\lambda)^T = mI_{\rho_D}$, as this will be important in what follows.
Then, multiplying $\mathcal{L}(\lambda)$ on the left by the unimodular matrix $U(\la)=\diag(U_A(\la)^{-1},U_{D}(\la)^{-1})$, we get
\begin{equation}\label{eq:unimodular1}
\mathcal{L}(\la)U(\la)=\left[
\begin{array}{cc|cc}
X_{AA}(\la) & \phantom{a} A(\la) \phantom{a} &X_{BD}(\la) & B(\la) \\
I_{n\,\rho_A} & 0 & 0 & 0 \\ \hline \phantom{\Big|}
-X_{CA}(\la) & -C(\la) & X_{DD}(\la) & D(\la)  \\
0 & 0 & I_{m\,\rho_D} & 0
\end{array}
\right] ,
\end{equation}
 where 
\begin{equation}\label{eq:exis}
X_{ij}(\la):=M_{i}(\la)\wh N_{j}(\la)^{T}, \quad \quad  i,j\in\{A,D\}.
\end{equation}
Since the matrices in \eqref{eq:mat_minimality} have no eigenvalues in $\Omega$, we have that $\mathcal{L}(\la)U(\la)$ is minimal in $\Omega$ and, therefore, $\mathcal{L}(\la)$ is minimal in $\Omega$. By Lemma \ref{lemma:unimodular},  $\mathcal{L}(\lambda)U(\lambda)$ is unimodularly equivalent to $\diag(P(\lambda),I_{n\,\rho_A+m\,\rho_B})$.
Hence, $\mathcal{L}(\lambda)$ is a linearization of the polynomial system matrix $P(\lambda)$.
As a consequence of this, we have
\begin{alignat*}{2}
\mbox{zero elem. div. of $R(\lambda)$ in $\Omega$} =& \mbox{elem. div. of $P(\lambda)$ in $\Omega$} \quad \quad &&\mbox{($P$ is minimal in $\Omega$)}\\
=&\mbox{elem. div. of $\mathcal{L}(\lambda)$ in $\Omega$} \quad \quad &&\mbox{($\mathcal{L}$ is a linearization of $P$)}.
\end{alignat*}
Therefore, the zero elementary divisors of $R(\lambda)$ in $\Omega$ are equal to the elementary divisors of $P(\lambda)$ in $\Omega$. 

Since $\mathcal{L}(\lambda)$ is of size $(n+p+s)\times(n+m+s)$, where $s=n\rho_A+m\rho_B$, to finish the proof, it suffices to notice that 
\begin{alignat*}{2}
\rank \mathcal{L}(\lambda) =& \rank P(\lambda) + n\rho_A+m\rho_B \quad \quad && \mbox{(by \eqref{eq:unimodular1})} \\
=& \rank R(\lambda) + n + n\rho_A+m\rho_B \quad \quad &&\mbox{(by \eqref{ranks_rel})}.
\end{alignat*}
By Theorem \ref{thm:spectral charac}, we conclude that $\mathcal{L}(\lambda)$ is a linearization of $R(\lambda)$ in $\Omega$.
\end{proof}

In Example \ref{ex:1}, we show how to use Theorem \ref{thm:linearization} to construct linearizations of a rational matrix of the form \eqref{eq_realizR}.
For simplicity, we assume that the polynomial matrices $A(\lambda)$, $B(\lambda)$, $C(\lambda)$ and $D(\lambda)$ are expressed in the monomial basis.
But we emphasize that the construction can be easily adapted to many other polynomial bases (Chebyshev, Lagrange, Newton, etc).

\begin{example}\label{ex:1}
Let us consider a rational matrix of the form
\begin{align*}
R(\lambda) =& D(\lambda) + C(\lambda)A(\lambda)^{-1} B(\lambda) = \\ & D_3\lambda^3+D_2\lambda^2+ D_1\lambda + D_0  + \\&
\hspace{2cm} ( C_1\lambda+C_0)(A_3\lambda^3+A_2\lambda^2+ A_1\lambda + A_0)^{-1}(B_2\lambda^2 +  B_1\lambda + B_0),
\end{align*}
where $A(\la)\in\FF[\la]^ {n\times n}$ is regular, and $B(\la)\in\FF[\la]^{n\times m}$, $C(\la)\in\FF[\la]^{p\times n}$ and $D(\la)\in\FF[\la]^{p\times m}$.
We will use block Kronecker pencils \cite[Section 4]{BKL}, which are particular cases of block minimal basis pencils, to construct the linearizations $L_A(\lambda)$ and $L_D(\lambda)$ in Theorem \ref{thm:linearization}. We recall that the construction of block Kronecker pencils involves a pair of dual minimal bases of the form
\[
K(\lambda) = 
\begin{bmatrix}
	-I_s & I_s \lambda & 0 & \cdots & 0 \\
	0 & -I_s & I_s \lambda & \ddots & \vdots \\
	\vdots & \ddots & \ddots & \ddots & 0 \\
	0 & \cdots & 0 & -I_s & I_s \lambda
\end{bmatrix}
\quad \mbox{and} \quad
N(\lambda)^T =
\begin{bmatrix}
	I_s\lambda^{d-1} \\ \vdots \\ I_s \lambda \\ I_s
\end{bmatrix}.
\]

To construct the linearization $L_A(\lambda)$ and the linear polynomial matrix $M_C(\lambda)$, we need to see both $A(\lambda)$ and $C(\lambda)$ as polynomial matrices of grade
\[
\max\{\mathrm{deg}\, A(\lambda), \mathrm{deg} \, C(\lambda) \} = \max\{3,1\} = 3.
\]
Then, we can use, for example, 
\[
L_A(\lambda) := 
\begin{bmatrix}
	 A_3 \lambda + A_2 & A_1 & A_0 \\
	-I_n &  I_n \lambda& 0 \\
	0 & -I_n &  I_n \lambda
\end{bmatrix}
\quad \mbox{and} \quad 
M_C(\lambda) := 
\begin{bmatrix}
	0 & C_1 & C_0
\end{bmatrix},
\]
with $N_A(\la):=\begin{bmatrix}
 I_n\la^2 &  I_n\la & I_n
\end{bmatrix}$. Similarly, to construct the linearization $L_D(\lambda)$ and the linear polynomial matrix $M_B(\lambda)$, we need to see both $D(\lambda)$ and $B(\lambda)$ as polynomial matrices of grade
\[
\max\{\mathrm{deg}\, D(\lambda), \mathrm{deg}\, B(\lambda) \} = \max\{3,2\} = 3.
\]
Then, we can use, for instance,
\[
L_D(\lambda) := 
\begin{bmatrix}
	D_3\la +D_2 & D_1 & D_0\\
	-I_m &  I_m\lambda & 0 \\
	0 & -I_m &  I_m\lambda
\end{bmatrix}
\quad \mbox{and} \quad 
M_B(\lambda) := 
\begin{bmatrix}
	 0 & B_2\la +B_1 & B_0
\end{bmatrix},
\]
with $N_D(\la):=\begin{bmatrix}
I_m\la^2 &  I_m\la & I_m
\end{bmatrix}$. Then the linear polynomial system matrix $\mathcal{L}(\lambda)$ in Theorem \ref{thm:linearization} is
\[
\mathcal{L}(\la) = 
\left[ \begin{array}{ccc|ccc}
 A_3\lambda+A_2 & A_1 & A_0 &  0 & B_2\la +B_1 & B_0 \\
-I_n &  I_n\lambda & 0 & 0 & 0 & 0 \\
0 & -I_n &I_n \lambda  & 0 & 0 & 0 \\ \hline
0 & -C_1 & - C_0 & D_3\la +D_2 & D_1 & D_0 \\
0 & 0 & 0 & -I_m &  I_m \lambda& 0 \\
0 & 0 & 0 & 0 & -I_m &  I_m \lambda
\end{array}\right].
\]
Theorem \ref{thm:linearization} guarantees that $\mathcal{L}(\lambda)$ is a linearization of $R(\lambda)$ wherever the polynomial matrices $	\begin{bmatrix} A(\la) \\ C(\la)\end{bmatrix}$ and $\begin{bmatrix} A(\la) & B(\la) \end{bmatrix}$ do not have eigenvalues.
\end{example}

\section{Linearizations at infinity}\label{sec:infinity}

In this section, we study when the pole and zero eigenstructure at infinity of a rational matrix $R(\lambda)$  can be recovered from that of any of its linearizations in Theorem \ref{thm:linearization}.

We begin by recalling the definition of $g$-reversal \cite{local}.
Given an integer $g$ and a rational matrix $R(\lambda)$, the $g$-reversal of the rational matrix $R(\lambda)$ is the rational matrix 
\[
\rev_g R(\lambda) := \lambda^g R\left( \frac{1}{\lambda} \right).
\]
If $R(\lambda)$ is a polynomial matrix and $g=\deg R(\lambda)$, then $\rev_g R(\lambda)$ is a matrix polynomial, which we denote by $\rev R(\lambda)$. 
An expression such as $\rev_g R(\lambda_0)$ should be understood as $\rev_g R(\lambda)$ evaluated at $\lambda=\lambda_0$.

In Definition \ref{def:lin at infinity}, we recall the notion of linearization at infinity of grade $g$ which has been recently introduced in \cite{local}.
\begin{deff}[Linearization at infinity of grade $g$]{\rm \cite{local}}
\label{def:lin at infinity}
Let $R(\lambda)\in\mathbb{F}(\lambda)^{p\times m}$ be a rational matrix and let
\[
\mathcal{L}(\lambda) = 
\left[\begin{array}{c|c}
A_1 \la +A_0 &B_1 \la +B_0\\\hline \phantom{\Big|} -(C_1 \la +C_0)&D_1 \la +D_0
\end{array}\right]\in\mathbb{F}[\lambda]^{(n+(p+s))\times(n+(m+s))}
\] 
be a nonconstant linear polynomial system matrix with transfer function matrix
\[
\widehat{R}(\lambda) = D_1\lambda+D_0+(C_1\lambda+C_0)(A_1\lambda+A_0)^{-1}(B_1\lambda+B_0) \in \mathbb{F}[\lambda]^{(p+s)\times (m+s)}.
\]
Let $g$ be an integer.
Then, $\mathcal{L}(\lambda)$ is a linearization of $R(\lambda)$ at infinity of grade $g$ if the following conditions hold:
\begin{enumerate}
\item[\rm(a)] $\rev \mathcal{L}(\lambda)$ is minimal at 0, and
\item[\rm(b)] there exist rational matrices $Q_1(\lambda)$ and $Q_2(\lambda)$ both regular at $0$ such that
\[
Q_1(\lambda)\begin{bmatrix}
\rev_g R(\lambda) & 0 \\
0 & I_s
\end{bmatrix} Q_2(\lambda) = \rev_1 \widehat{R}(\lambda).
\]

\end{enumerate}
\end{deff}

Theorem \ref{thm:spectral char at infinity} characterizes linearizations  at infinity of grade $g$ in terms of their spectral information.
\begin{theo}[Spectral characterization of linearizations at infinity  of grade $g$]{\rm \cite{local}}\label{thm:spectral char at infinity}
Let $R(\lambda)\in\mathbb{F}(\lambda)^{p\times m}$ be a rational matrix and let
\[
\mathcal{L}(\lambda) = 
\left[\begin{array}{c|c}
A_1 \la +A_0 &B_1 \la +B_0\\\hline \phantom{\Big|} -(C_1 \la +C_0)&D_1 \la +D_0
\end{array}\right]\in\mathbb{F}[\lambda]^{(n+q)\times(n+r)}
\] 
be a nonconstant linear polynomial system matrix, with state matrix $A_1\la+A_0$, such that $\rev \mathcal{L}(\lambda)$ is minimal at 0. Let $g$ be an integer.
Then $\mathcal{L}(\lambda)$ is a linearization of $R(\lambda)$ at infinity of grade $g$ if and only if 
\begin{enumerate}
\item[\rm (a)] $\rank \mathcal{L}(\lambda) = R(\lambda)+n+s$,
\item[\rm (b)] the pole elementary divisors of $\rev_g R(\lambda)$ at 0 are the elementary divisors of $\rev_1 A_1\lambda + A_0$ at 0, and
\item[\rm (c)] the zero elementary divisors of $\rev_g R(\lambda)$ at 0 are the elementary divisors of $\rev \mathcal{L}(\lambda)$ at 0.
\end{enumerate}
\end{theo}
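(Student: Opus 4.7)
The plan is to reduce this characterization at infinity to the local characterization in Theorem \ref{thm:spectral charac}, applied at $\Omega = \{0\}$, by means of the reversal operator. The whole statement should fall out once one recognizes that ``$\mathcal{L}(\lambda)$ is a linearization of $R(\lambda)$ at infinity of grade $g$'' is the same assertion as ``$\rev \mathcal{L}(\lambda)$ is a local linearization of $\rev_g R(\lambda)$ at $\{0\}$'' in the sense of Definition \ref{def:local lin}.

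First I would verify the algebraic dictionary between $\mathcal{L}(\lambda)$ and $\rev \mathcal{L}(\lambda) = \lambda \mathcal{L}(1/\lambda)$. A direct block-by-block computation shows that $\rev \mathcal{L}(\lambda)$ is again a linear polynomial system matrix of the same shape, with blocks $A_1+A_0\lambda$, $B_1+B_0\lambda$, $C_1+C_0\lambda$, $D_1+D_0\lambda$, and state matrix $\rev_1(A_1\lambda+A_0)=A_1+A_0\lambda$. A short Schur-complement calculation, using $(A_1/\lambda+A_0)^{-1}=\lambda(A_1+A_0\lambda)^{-1}$, then yields the identity
\[
\rev_1 \widehat{R}(\lambda) \;=\; (D_1+D_0\lambda)+(C_1+C_0\lambda)(A_1+A_0\lambda)^{-1}(B_1+B_0\lambda),
\]
that is, the transfer function matrix of $\rev \mathcal{L}(\lambda)$ equals $\rev_1 \widehat{R}(\lambda)$.

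With this identification, Definition \ref{def:lin at infinity} is literally Definition \ref{def:local lin} applied to $\rev \mathcal{L}(\lambda)$ and $\rev_g R(\lambda)$ at $\Omega=\{0\}$: both ask for minimality of $\rev \mathcal{L}$ at $0$ and for the existence of rational matrices $Q_1, Q_2$ regular at $0$ with $Q_1 \diag(\rev_g R(\lambda), I_s) Q_2 = \rev_1 \widehat{R}(\lambda)$. Thus I can invoke Theorem \ref{thm:spectral charac} with $\mathcal{L}$ replaced by $\rev \mathcal{L}$, $R$ replaced by $\rev_g R$, and $\Omega=\{0\}$, and its three conditions will translate line-by-line into conditions (a)--(c) of the present statement. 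For (a), one uses that $\lambda\mapsto 1/\lambda$ is an $\mathbb{F}$-automorphism of $\mathbb{F}(\lambda)$, so $\rank \mathcal{L}(\lambda)=\rank \rev \mathcal{L}(\lambda)$ and $\rank R(\lambda)=\rank \rev_g R(\lambda)$. For (b) and (c), the translation is by the very definition of poles, zeros, and elementary divisors \emph{at infinity}: those of $R(\lambda)$ at $\infty$ are defined as those of $\rev_g R(\lambda)$ at $0$, and similarly for the state matrix.

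The main obstacle I expect is the bookkeeping around the equivalences between the two definitions: one has to be careful that the $s$ in the identity block is unchanged under reversal, that $\rev\mathcal{L}(\lambda)$ really is nonconstant and linear (so that minimality at $0$ makes sense in the form used by Theorem \ref{thm:spectral charac}), and that the pole/zero elementary divisors at infinity of $R(\lambda)$ and of the pencil $A_1\lambda+A_0$ are indeed defined, without any grade shift, as those of $\rev_g R(\lambda)$ and $\rev_1(A_1\lambda+A_0)$ at $0$. Once this translation is nailed down, the proof is essentially a one-line invocation of Theorem \ref{thm:spectral charac}.
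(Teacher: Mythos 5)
Your proposal is correct: the identification of the transfer function of $\rev \mathcal{L}(\lambda)$ with $\rev_1 \widehat{R}(\lambda)$ holds by the Schur-complement computation you sketch, and with it Definition \ref{def:lin at infinity} becomes verbatim Definition \ref{def:local lin} for $\rev\mathcal{L}(\lambda)$ and $\rev_g R(\lambda)$ at $\Omega=\{0\}$, so the statement follows from Theorem \ref{thm:spectral charac} together with the rank invariance of the substitution $\lambda\mapsto 1/\lambda$ and scaling by $\lambda^g$. The paper itself states this theorem without proof, citing \cite{local}, and your reduction by reversal is precisely the mechanism on which that cited result rests (and which this paper reuses in the proof of Theorem \ref{thm:linearization at infinity}), so there is nothing to add beyond noting that Theorem \ref{thm:spectral charac} does not require $\rev\mathcal{L}(\lambda)$ to be nonconstant, which disposes of the one edge case you flagged.
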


An important property of linearizations at infinity of grade $g$ of a rational matrix $R(\lambda)$ is that they allow for the recovery of the complete spectral information of $R(\lambda)$ at infinity, as Theorem \ref{thm:recovery at infinity} shows.
\begin{theo}[Recovery of the spectral information at infinity]{\rm \cite{local}}
\label{thm:recovery at infinity}
Let $R(\lambda)\in\mathbb{F}(\lambda)^{p\times m}$ be a rational matrix and let
\[
\mathcal{L}(\lambda) = 
\left[\begin{array}{c|c}
A_1 \la +A_0 &B_1 \la +B_0\\\hline \phantom{\Big|} -(C_1 \la +C_0)&D_1 \la +D_0
\end{array}\right]\in\mathbb{F}[\lambda]^{(n+q)\times(n+r)}
\] 
be a linearization at infinity of grade $g$ of $R(\lambda)$, with $ \mathcal{L}(\lambda)$ nonconstant. 
If $e_1\leq \cdots \leq e_t$ are the partial multiplicities of $\rev_1 (A_1\lambda+A_0)$ at 0 and $\widetilde{e}_1\leq \cdots \leq \widetilde{e}_u$ are the partial multiplicities of $\rev \mathcal{L}(\lambda)$ at 0, then the invariant orders at infinity $q_1\leq \cdots \leq q_r$ of $R(\lambda)$ are
\[
(q_1,\hdots,q_r) = (-e_t,\hdots,-e_1,\underbrace{0,\hdots,0}_{r-t-u},\widetilde{e}_1,\hdots,\widetilde{e}_u)-(g,\hdots,g).
\] 
\end{theo}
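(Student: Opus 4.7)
The plan is to reduce the statement to the spectral characterization of Theorem \ref{thm:spectral char at infinity} together with the definition of invariant orders at infinity of $R(\lambda)$ via the $g$-reversal. The key insight is that the theorem's formula simply encodes the ordered list of invariant orders at $0$ of $\rev_g R(\lambda)$ (minus the grade shift), and both pieces of that ordered list are already identified with spectral data of $\mathcal{L}(\lambda)$ by the characterization theorem.

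First, I would invoke Theorem \ref{thm:spectral char at infinity} directly: since $\mathcal{L}(\lambda)$ is a linearization of $R(\lambda)$ at infinity of grade $g$, parts (b) and (c) give that the pole elementary divisors of $\rev_g R(\lambda)$ at $0$ are exactly the elementary divisors of $\rev_1(A_1\lambda+A_0)$ at $0$, and the zero elementary divisors of $\rev_g R(\lambda)$ at $0$ are exactly those of $\rev\mathcal{L}(\lambda)$ at $0$. Thus the lists $e_1\le\cdots\le e_t$ and $\widetilde e_1\le\cdots\le\widetilde e_u$ are, respectively, the pole and zero partial multiplicities of $\rev_g R(\lambda)$ at $\lambda=0$.

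Next, I would assemble the Smith--McMillan form of $\rev_g R(\lambda)$ at $0$. By part (a) of Theorem \ref{thm:spectral char at infinity} we have $\rank\rev_g R(\lambda)=\rank R(\lambda)=r$, so this local form has exactly $r$ invariant orders $d_1\le\cdots\le d_r\in\Z$: the negative ones are precisely $-e_t,\ldots,-e_1$ (the signed pole orders), the positive ones are precisely $\widetilde e_1,\ldots,\widetilde e_u$, and the remaining $r-t-u\geq 0$ orders are $0$. In increasing order this gives
\[
(d_1,\ldots,d_r)=(-e_t,\ldots,-e_1,\underbrace{0,\ldots,0}_{r-t-u},\widetilde e_1,\ldots,\widetilde e_u).
\]
Finally, I would apply the definition of the invariant orders at infinity of $R(\lambda)$ viewed with grade $g$: by construction of $\rev_g R(\lambda)=\lambda^g R(1/\lambda)$, the Smith--McMillan structure of $R(\lambda)$ at $\infty$ is obtained from that of $\rev_g R(\lambda)$ at $0$ by subtracting $g$ from each diagonal exponent. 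Hence $q_i=d_i-g$ for every $i$, which is exactly the stated formula.

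The main obstacle is not any single deep estimate but rather careful bookkeeping of two independent conventions: (i) the ordering convention that places negatives of pole multiplicities on the left, zeros in the middle, and zero multiplicities on the right of the invariant order list, and (ii) the grade-dependent relationship between the Smith--McMillan form of $\rev_g R(\lambda)$ at $0$ and the invariant orders at infinity of $R(\lambda)$. Once these conventions are fixed as above (the same conventions used in \cite{local} where linearizations at infinity of grade $g$ are introduced), and once one checks that the counts $t$, $u$, and $r-t-u$ are all nonnegative and sum to $r$ (which follows from $\rank R(\lambda)=r$ and the fact that at a single point the numbers of poles and zeros are jointly bounded by the rank), the proof is immediate from Theorem \ref{thm:spectral char at infinity}.
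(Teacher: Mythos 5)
Your proposal is correct, but note that this paper never proves Theorem \ref{thm:recovery at infinity}: it is quoted verbatim from \cite{local} with no in-paper argument to compare against, and your derivation is precisely the intended one --- the forward direction of Theorem \ref{thm:spectral char at infinity} applies because Definition \ref{def:lin at infinity}(a) supplies the required minimality of $\rev\mathcal{L}(\lambda)$ at $0$, the lists $e_1\leq\cdots\leq e_t$ and $\widetilde{e}_1\leq\cdots\leq\widetilde{e}_u$ then become the pole and zero partial multiplicities of $\rev_g R(\lambda)$ at $0$, and the local Smith--McMillan bookkeeping (disjointness of pole and zero indices from coprimality, monotonicity from the divisibility chains) together with the shift $\rev_g R(\lambda)=\la^g R(1/\la)$ gives the formula. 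Two cosmetic remarks only: the identity $\rank \rev_g R(\lambda)=\rank R(\lambda)$ is elementary (reversal preserves normal rank) rather than a consequence of part (a) of the characterization, which instead relates $\rank\mathcal{L}(\lambda)$ to $\rank R(\lambda)$; and the $r$ in the formula must be read as $\rank R(\lambda)$ (the number of invariant orders at infinity), a clash with the column parameter $n+r$ in the displayed size of $\mathcal{L}(\lambda)$ that the statement inherits from \cite{local}.
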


Theorem \ref{thm:linearization at infinity} is the main result of this section.
It shows that, under some mild conditions, the local linearizations introduced in Section \ref{sec:local} are also linearizations at infinity of the rational matrix $R(\lambda)$.
\begin{theo}\label{thm:linearization at infinity} 
Let $R(\la)\in\F(\la)^{p\times m}$ be a rational matrix expressed in the form $R(\la)=D(\la)+C(\la)A(\la)^{-1}B(\la)$, for some nonsingular polynomial matrix  $A(\la)\in\FF[\la]^ {n\times n}$, and polynomial matrices
	$B(\la)\in\FF[\la]^{n\times m}$, $C(\la)\in\FF[\la]^{p\times n}$ and $D(\la)\in\FF[\la]^{p\times m}$. 
	Let 
	\begin{equation*}
	L_{A}(\la) =
	\left[
	\begin{array}{c}
	M_{A}(\la)  \\
	K_{A} (\lambda) 
	\end{array}
	\right] \quad \text{ and } \quad L_{D}(\la) =
	\left[
	\begin{array}{c}
	M_{D}(\la)  \\
	K_{D} (\lambda) 
	\end{array}
	\right]
	\end{equation*}
	 be strong block minimal basis linearizations of $A(\la)$ and $D(\la),$ respectively.
	 Let $N_{A}(\la)$ and $N_{D}(\la)$ be minimal bases dual to $K_{A}(\la)$ and $K_{D}(\la),$ respectively, and denote $\rho_A:=\deg N_{A}(\la)$, $\rho_D:=\deg N_{D}(\la)$. 
	 Consider linear polynomial matrices $M_{C}(\la)$ and $M_{B}(\la)$ such that 
	\begin{equation*}
	M_{C}(\la)N_{A}(\la)^{T}=C(\la)\quad \text{ and }\quad\ M_{B}(\la)N_{D}(\la)^{T}=B(\la), 
	\end{equation*} 
	and the linear polynomial system matrix  
	\begin{equation} \label{eq:linearization}
	\mathcal{L}(\la) = \left[
	\begin{array}{c|c}
	M_A(\la) & \phantom{a} M_B(\la) \phantom{a} \\
	K_A(\la) & 0 \\ \hline \phantom{\Big|}
	-M_C(\la)  \phantom{\Big|}& M_D(\la) \\
	0 & K_D (\la)
	\end{array}
	\right],
	\end{equation} 
with state matrix $L_{A}(\la).$ 
If the matrices
	\begin{equation}\label{mat_minimality_inf}
	\begin{bmatrix} \rev_{\rho_A+1} A(\la) \\ \rev_{\rho_A +1} C(\la)\end{bmatrix}\quad \text{and} \quad \begin{bmatrix} \rev_{\rho_A +1} A(\la) & \rev_{\rho_D+1} B(\la)\end{bmatrix}
	\end{equation}
	have no eigenvalues at $0$, then $\mathcal{L}(\la)$ is a linearization of $R(\la)$ at infinity of grade $\rho_D+1$.
\end{theo}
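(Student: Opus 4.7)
The plan is to mirror the proof of Theorem \ref{thm:linearization} on the reversal side and then conclude via the spectral characterization of linearizations at infinity (Theorem \ref{thm:spectral char at infinity}). The key observation that makes this possible is that $L_A(\la)$ and $L_D(\la)$ are \emph{strong} block minimal basis pencils, so the dual bases $N_A(\la)$ and $N_D(\la)$ have all row degrees equal to $\rho_A$ and $\rho_D$, respectively. Consequently $\rev_1 K_A(\la)$ is a minimal basis with row degrees $1$ dual to the minimal basis $\rev_{\rho_A} N_A(\la)$, and similarly for the $D$-blocks; Remark \ref{rem_unimodular} then yields unimodular matrices $\widetilde U_A(\la)$ and $\widetilde U_D(\la)$ having the same block structure as $U_A(\la)$ and $U_D(\la)$ but built from the reversed bases $\rev_1 K_A, \rev_{\rho_A} N_A, \rev_1 K_D, \rev_{\rho_D} N_D$.

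Next, I would form $\rev\mathcal{L}(\la)\,\widetilde U(\la)$ with $\widetilde U(\la) := \diag(\widetilde U_A(\la)^{-1},\widetilde U_D(\la)^{-1})$. Using the identity $\rev_{a+b}(PQ)=(\rev_a P)(\rev_b Q)$ applied (with $a=1$) to the relations $M_A N_A^T=A$, $M_C N_A^T=C$, $M_B N_D^T=B$, $M_D N_D^T=D$, this product takes the form
\[
\rev\mathcal{L}(\la)\,\widetilde U(\la)=\begin{bmatrix}\widetilde X_{AA}(\la)&\rev_{\rho_A+1} A(\la)&\widetilde X_{BD}(\la)&\rev_{\rho_D+1} B(\la)\\ I_{n\rho_A}&0&0&0\\ -\widetilde X_{CA}(\la)&-\rev_{\rho_A+1} C(\la)&\widetilde X_{DD}(\la)&\rev_{\rho_D+1} D(\la)\\ 0&0&I_{m\rho_D}&0\end{bmatrix},
\]
and Lemma \ref{lemma:unimodular} gives a unimodular equivalence with $\diag(\widetilde P(\la),I_{n\rho_A+m\rho_D})$, where
\[
\widetilde P(\la):=\begin{bmatrix}\rev_{\rho_A+1} A(\la)&\rev_{\rho_D+1} B(\la)\\ -\rev_{\rho_A+1} C(\la)&\rev_{\rho_D+1} D(\la)\end{bmatrix}.
\]
A direct application of the same reversal identity to $R=D+CA^{-1}B$ shows that the transfer function of $\widetilde P(\la)$ is exactly $\rev_{\rho_D+1} R(\la)$, so $\widetilde P(\la)$ is a polynomial system matrix of $\rev_{\rho_D+1} R(\la)$, which the hypothesis \eqref{mat_minimality_inf} guarantees to be minimal at $0$.

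With this setup I would verify the three conditions of Theorem \ref{thm:spectral char at infinity}. The unimodular equivalence above together with the minimality of $\widetilde P(\la)$ at $0$ implies that $\rev\mathcal{L}(\la)$ is minimal at $0$. The rank condition follows from $\rank\rev\mathcal{L}(\la)=\rank\widetilde P(\la)+n\rho_A+m\rho_D$ combined with \eqref{ranks_rel} applied to $\widetilde P$, giving $\rank\rev\mathcal{L}(\la)=\rank\rev_{\rho_D+1}R(\la)+n+n\rho_A+m\rho_D$. The pole elementary divisors of $\rev_{\rho_D+1} R(\la)$ at $0$ coincide with those of the state matrix $\rev_{\rho_A+1} A(\la)$ of $\widetilde P$ at $0$ (by minimality of $\widetilde P$ at $0$), which in turn agree with those of $\rev_1 L_A(\la)$ at $0$ because $L_A(\la)$ is a strong linearization of $A(\la)$ of grade $\rho_A+1$. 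The zero elementary divisors of $\rev_{\rho_D+1} R(\la)$ at $0$ coincide with those of $\widetilde P(\la)$ at $0$, and these agree with those of $\rev\mathcal{L}(\la)$ at $0$ by the unimodular equivalence. Theorem \ref{thm:spectral char at infinity} then delivers the conclusion.

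The main obstacle is the bookkeeping with reversals of several different grades, together with the need to verify that the reversed pairs $(\rev_1 K_i,\rev_{\rho_i} N_i)$ really remain dual minimal bases. This is precisely where the \emph{strong} hypothesis on $L_A$ and $L_D$ is essential: without it, the row degrees of $N_A$ (resp.\ $N_D$) would not all be equal to $\rho_A$ (resp.\ $\rho_D$), and the reversals of $K_A,N_A$ (resp.\ $K_D,N_D$) would fail to be dual minimal bases, so the unimodular matrices $\widetilde U_A,\widetilde U_D$ driving the whole argument would not exist.
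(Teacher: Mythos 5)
Your proposal is correct and follows essentially the same route as the paper's proof: you build the reversed polynomial system matrix $\widetilde P(\la)$ whose transfer function is $\rev_{\rho_D+1}R(\la)$, reduce $\rev_1\mathcal{L}(\la)\,\widetilde U(\la)$ to $\diag(\widetilde P(\la), I_{n\rho_A+m\rho_D})$ via the reversed embedding of \eqref{eq:embedding2} and Lemma \ref{lemma:unimodular}, and verify the three conditions of Theorem \ref{thm:spectral char at infinity} exactly as the paper does. Your extra remarks on why strongness is needed (equal row degrees of $N_A$, $N_D$ so that $(\rev_1 K_i, \rev_{\rho_i}N_i)$ remain dual minimal bases) merely make explicit what the paper delegates to \cite[Theorem 2.10]{BKL}.
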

\begin{rem}\rm
Since $L_A(\lambda)$ and $L_B(\lambda)$ are strong block minimal basis pencils, we recall that there exist unimodular polynomial matrices of the form
\begin{equation}\label{eq:embedding2}
\widetilde{U}_i(\lambda) =
\begin{bmatrix}
\rev_{1}K_{i}(\la)\\ \widetilde{K}_i(\la)
\end{bmatrix}
\quad \mbox{and} \quad
\widetilde{U}_i(\lambda)^{-1}=
\begin{bmatrix}
\widetilde{N}_i(\lambda)^T & \rev_{\rho_i} N_i(\lambda)^T
\end{bmatrix},
\end{equation}
for $i\in\{A,B\}$; see \cite[Theorem 2.10]{BKL}.
\end{rem}
\begin{proof}
To prove that $\mathcal{L}(\lambda)$ is a linearization at infinity of grade $g$ of $R(\lambda)$, we will use the characterization in Theorem \ref{thm:spectral char at infinity}.

Let $g:=\rho_D + 1$.
Let us consider the polynomial system matrix
\begin{equation}\label{eq:poly system theorem}
\widetilde{P}(\lambda):=\begin{bmatrix}
\rev_{\rho_A+1}A(\lambda) & \rev_{\rho_D+1} B(\lambda) \\
\rev_{\rho_A+1} C(\lambda) & \rev_{\rho_D+1} D(\lambda)
\end{bmatrix},
\end{equation}
with state space matrix $\rev_{\rho_A+1}A(\lambda)$.
We observe that the transfer function of the polynomial system matrix \eqref{eq:poly system theorem} is
\[
\rev_{\rho_D+1}D(\lambda) + \rev_{\rho_A+1}C(\lambda)\left(\rev_{\rho_A+1}A(\lambda) \right)^{-1}\rev_{\rho_D+1}B(\lambda) = \rev_g R(\lambda).
\]
Hence, we have
\begin{alignat*}{2}
\mbox{pole elem. div. of $\rev_g R(\lambda)$ at 0} =& \mbox{elem. div. of $\rev_{\rho_A+1}A(\lambda)$ at 0} \quad  &&\mbox{($\widetilde{P}$ is minimal at 0)}\\
=&\mbox{elem. div. of $\rev_1 L_A(\lambda)$ in $\Omega$} \quad &&\mbox{($L_A$ is  strong)}.
\end{alignat*}
Thus, the pole elementary divisors of $\rev_g R(\lambda)$ at 0 are equal to the elementary divisors of $\rev_1 L_A(\lambda)$ at 0.

Multiplying $\rev_1 \mathcal{L}(\lambda)$ on the left by the unimodular matrix $\widetilde{U}(\lambda) = \diag(\widetilde{U}_A(\lambda)^{-1},\widetilde{U}_D(\lambda)^{-1})$, where $\widetilde{U}_A(\lambda)^{-1}$ and $\widetilde{U}_D(\lambda)^{-1}$ are defined in \eqref{eq:embedding2}, we get 
\begin{equation}\label{eq:unimodular3}
\rev_1\mathcal{L}(\la) \, \widetilde{U}(\lambda) =
\left[
\begin{array}{cc|cc}
* & \phantom{a} \rev_{\rho_A+1} A(\la) \phantom{a} & * & \rev_{\rho_D+1} B(\la) \\
I_{n\,\rho_A} & 0 & 0 & 0 \\ \hline \phantom{\Big|}
* & -\rev_{\rho_A+1} C(\la) & * & \rev_{\rho_D+1} D(\la)  \\
0 & 0 & I_{m\rho_D} & 0
\end{array}
\right],
\end{equation}
where $*$ denotes polynomial matrices that are not important for the argument. Since the matrices in \eqref{mat_minimality_inf} have no eigenvalues at $0$, we have that $\rev_1\mathcal{L}(\la) \, \widetilde{U}(\lambda)$ is minimal at $0$ and, therefore, $\rev_1\mathcal{L}(\la)$ is minimal at $0$.
By Lemma \ref{lemma:unimodular}, the matrix polynomial $\rev_1\mathcal{L}(\la) \, \widetilde{U}(\lambda)$ is unimodularly equivalent to $\diag(\widetilde{P}(\lambda), I_{n\,\rho_A + m\, \rho_B})$.
Hence, $\rev_1 \mathcal{L}(\lambda)$ is a linearization of $\widetilde{P}(\lambda)$. 
Thus, we have 
\begin{alignat*}{2}
\mbox{zero elem. div. of $\rev_g R(\lambda)$ at 0} =& \mbox{elem. div. of $\widetilde{P}(\lambda)$ at 0} \quad &&\mbox{($\widetilde{P}$ is minimal at 0)}\\
=&\mbox{elem. div. of $\rev_1 \mathcal{L}(\lambda)$ at 0} \quad  &&\mbox{($\rev_1 \mathcal{L}$ is a linearization of $\widetilde{P}$)}.
\end{alignat*}
Therefore, the zero elementary divisors of $\rev_g R(\lambda)$ at 0 are equal to the elementary divisors of $\rev_1 \mathcal{L}(\lambda)$ at 0.

To finish the proof, it suffices to notice that 
\begin{alignat*}{2}
\rank \rev_1 \mathcal{L}(\lambda) =& \rank \widetilde{P}(\lambda) + n\rho_A+m\rho_B \quad \quad && \mbox{(by \eqref{eq:unimodular3})} \\
=& \rank \rev_g R(\lambda) + n + n\rho_A+m\rho_B \quad \quad &&\mbox{(by \eqref{ranks_rel})}.
\end{alignat*}
Conclusively, by Theorem \ref{thm:spectral char at infinity}, the linear polynomial matrix system $\mathcal{L}(\lambda)$ is a linearization of $R(\lambda)$ at infinity of grade $g=\rho_D+1$.
\end{proof}

%

\begin{rem}\rm
The linearization $ \mathcal{L}(\la)$ in Theorem \ref{thm:linearization} contains the spectral information of the rational matrix $R(\la)$ in a particular set $\Omega$ whenever certain minimality conditions are satisfied. More precisely, if the matrix polynomials in \eqref{eq:mat_minimality} have no eigenvalues in $\Omega$ then the zeros in $\Omega$ of $R(\la)$ are the eigenvalues in $\Omega$ of $ \mathcal{L}(\la)$, and the poles in $\Omega$ of $R(\la)$ are the eigenvalues in $\Omega$ of the block minimal basis pencil $L_{A}(\la)$.
If $\Omega=\F$ then we can recover the complete information about finite poles and zeros of $R(\la)$ from $\mathcal{L}(\la).$
If, in addition, the matrix polynomials \eqref{mat_minimality_inf} have no eigenvalues at zero, then $ \mathcal{L}(\la)$ is a strong linearization at infinity of grade $\rho_D+1$ and, hence, by Theorem \ref{thm:recovery at infinity}, we can also recover the complete zero and pole information of $R(\la)$ at infinity. 
\end{rem}
\section{An illustrative example}\label{example}

Let us consider an $m\times m$ rational matrix of the form 
	\begin{equation}\label{ex_rationalmatrix}
	R(\la)=D(\la) + f_{1}(\la) K_{1} + f_2(\la)K_2,
	\end{equation}
	where $D(\la)$ is a polynomial matrix of degree $2$, $f_{1}(\la)=\frac{(\la^{2}+1)(\la+2)}{\la^2-\la-2}$, $f_2(\la)=\frac{(\la^2-1)\la^2}{\la+2}$, and $K_{1}$ and $K_{2}$ are constant matrices having ranks $r_1$ and $r_2$, respectively. 
For $i=1,2$, we can write $K_{i}=c_{i}b_{i}^{T}$, for some $c_i$ and $b_i$ both of size $m\times r_i$. 
Then, a realization of $R(\lambda)$ as in \eqref{eq_realizR} is
	\begin{equation*}\label{ex_rationalmatrix_realization}
	R(\la)=D(\la) + \underbrace{ \begin{bmatrix} c_1 (\la^2+1) &c_2  (\la^2-1)	\end{bmatrix}}_{:=C(\la)} \underbrace{\begin{bmatrix}
	I_{r_1}(\la^2-\la-2) & 0 \\
	0 & I_{r_2}(\la+2)
	\end{bmatrix}^{-1}}_{:=A(\la)^{-1}}\underbrace{ \begin{bmatrix} b_{1}^{T} (\la+2) \\[3pt] b_{2}^{T} \la^{2}
	\end{bmatrix}}_{:=B(\la)}.
	\end{equation*}
Notice that the polynomial matrices $A(\lambda)$, $B(\lambda)$ and $C(\lambda)$  have degree 2.
Hence, we can write $A(\la):= A_{2}\la^{2}+ A_1 \la + A_0$, $B(\la):= B_{2}\la^{2}+ B_1 \la  + B_0$, $C(\la):= C_{2}\la^{2}+ C_1 \la + C_0$ and $D(\la):= D_{2} \la^{2}+ D_1 \la + D_0$.
 Then, we set $r:=r_1+r_2$, and we define the linear polynomial system matrix
	\begin{equation*}
	\mathcal{L}(\la) = \left[
	\begin{array}{cc|cc}
 A_2 \la + A_1 & A_0 \phantom{a}&  B_2 \la + B_1 & B_0  \\
-I_{r} &  I_{r} \la & 0 & 0  \\ \hline \phantom{\Big|}
- C_2\la -C_1 & -C_0  \phantom{\Big|}& D_2 \la  + D_1 & D_0 \\
	0 & 0 & -I_m &  I_m  \la
	\end{array}
	\right].
	\end{equation*}
	It can be proved that condition \eqref{eq:mat_minimality} is satisfied for all $\la_0\in\F$ and that condition \eqref{mat_minimality_inf} is also satisfied. 
Thus, by Theorem \ref{thm:linearization}, $	\mathcal{L}(\la)$ is a linearization of $R(\la)$ in $\F$ and also a linearization of $R(\la)$ at infinity of grade $2$. 

	Notice that the rational functions $f_1(\la)$ and $f_{2}(\la)$ in \eqref{ex_rationalmatrix} are not (strictly) proper.
Nevertheless, we have been able to construct a strong linearization  in the sense of \cite{local} for $R(\la)$ without decomposing $R(\lambda)$ as the sum of its polynomial part and its strictly proper rational part and without considering a minimal state space realalization of the strictly proper part, in contrast to the methods appearing in \cite{AlBe16, strong,dopmarquin2019, SuBai}. 
We emphasize that, in order to construct linearizations of rational matrices, we must take into account that a realization as in \eqref{eq_realizR} is not unique and that the ideal thing is to consider a realization easy to build from the original expression of the corresponding rational matrix without performing many computations.

%

\section{Recovery of eigenvectors, minimal bases and minimal indices} \label{sec:recovery}

In this section, we show how to recover the elements in the right and left nullspaces of a rational matrix $R(\la)$ (regular or singular) from those in the right and left nullspaces of a linearization $\mathcal{L}(\la)$ as in Theorem \ref{thm:linearization}, as well as minimal bases, minimal indices and eigenvectors.
To do this, we consider the transfer function matrix $\wh R(\la)$ of $\mathcal{L}(\la)$ and a polynomial system matrix $P(\la)$ of $R(\la)$, and we study the relation between the elements in their right and left nullspaces.
We will follow the scheme:
\begin{equation*}\label{scheme}
\mathcal{L}(\la) \xRightarrow[\text{matrix (Lemma \ref{Lemma: from R to P}) }]{\text{transfer function}} \wh R(\la) \xleftrightarrow{\text{Lemma \ref{Lemma:from R to Rhat}}} R(\la) \xLeftarrow[\text{matrix (Lemma \ref{Lemma: from R to P})}]{\text{transfer function}} P(\la)
\end{equation*}

\subsection{One-sided factorizations}

Theorem \ref{thm:factorizations} is the only result in this subsection.
 It establishes a relation, in terms of one-side factorizations, between a rational matrix $R(\la)$ and the transfer function matrix $\wh R(\la)$ of a linearization for $R(\la)$ as in Theorem \ref{thm:linearization}.

\begin{theo}
\label{thm:factorizations} 
Let $R(\la)\in\F(\la)^{p\times m}$ be a rational matrix expressed in the form $R(\la)=D(\la)+C(\la)A(\la)^{-1}B(\la)$, for some nonsingular polynomial matrix  $A(\la)\in\FF[\la]^ {n\times n}$ and polynomial matrices
	$B(\la)\in\FF[\la]^{n\times m}$, $C(\la)\in\FF[\la]^{p\times n}$ and $D(\la)\in\FF[\la]^{p\times m}.$ Consider the linear polynomial matrix
	
\begin{equation*} 
	\mathcal{L}(\la) = \left[
	\begin{array}{c|c}
	M_A(\la) & \phantom{a} M_B(\la) \phantom{a} \\
	K_A(\la) & 0 \\ \hline \phantom{\Big|}
	-M_C(\la)  \phantom{\Big|}& M_D(\la) \\
	0 & K_D (\la)
	\end{array}
	\right],
\end{equation*} 
as in Theorem \ref{thm:linearization}, and let $\wh R(\la)$ be the transfer function matrix of $\mathcal{L}(\lambda)$.
Then, we have the following one-sided factorizations
	\begin{equation}\label{eq:right fact}
	\wh R(\la)N_{D}(\la)^{T}=
	\left[\begin{array}{c}
	R(\la) \\
	0 \end{array}\right],
	\end{equation}
and
\begin{equation}\label{eq:left fact}
\begin{bmatrix}
	I_p & -M_{R}(\la)\widehat{N}_D(\lambda)^T\end{bmatrix} \widehat{R}(\lambda) 
= R(\lambda) \widehat{K}_D(\lambda),
\end{equation}
where $M_{R}(\la):=M_D(\lambda)+C(\lambda)A(\lambda)^{-1}M_B(\lambda) $, and $\widehat{K}_D(\lambda)$ and $\widehat{N}_D(\lambda)$ are defined in \eqref{eq:embedding}.
\end{theo}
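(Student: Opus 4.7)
The plan is to compute $\widehat{R}(\lambda)$ in a useful closed form and then verify each factorization by direct algebraic manipulation, using the defining duality/embedding identities.

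First I would identify the blocks of $\mathcal{L}(\lambda)$ in the standard polynomial system matrix pattern: the state matrix is $L_A(\lambda)$, the ``$B$'' block is $[M_B(\lambda)^T \;\; 0]^T$, the ``$-C$'' block is $[-M_C(\lambda) \;\; 0]$, and the ``$D$'' block is $[M_D(\lambda)^T \;\; K_D(\lambda)^T]^T$. Then
\[
\widehat{R}(\lambda) = \begin{bmatrix} M_D(\lambda) \\ K_D(\lambda) \end{bmatrix} + \begin{bmatrix} M_C(\lambda) \\ 0 \end{bmatrix} L_A(\lambda)^{-1} \begin{bmatrix} M_B(\lambda) \\ 0 \end{bmatrix}.
\]
The key observation is that $L_A(\lambda) N_A(\lambda)^T = \left[\begin{smallmatrix} A(\lambda) \\ 0 \end{smallmatrix}\right]$, since $M_A N_A^T = A$ (because $L_A$ is a block minimal basis pencil associated with $A$) and $K_A N_A^T = 0$ (duality). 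Hence, for any polynomial matrix $X(\lambda)$ of appropriate size,
\[
L_A(\lambda)^{-1} \begin{bmatrix} X(\lambda) \\ 0 \end{bmatrix} = N_A(\lambda)^T A(\lambda)^{-1} X(\lambda).
\]
Taking $X = M_B$ and using $M_C N_A^T = C$ reduces the expression to
\[
\widehat{R}(\lambda) = \begin{bmatrix} M_D(\lambda) + C(\lambda) A(\lambda)^{-1} M_B(\lambda) \\ K_D(\lambda) \end{bmatrix} = \begin{bmatrix} M_R(\lambda) \\ K_D(\lambda) \end{bmatrix}.
\]

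For the right factorization \eqref{eq:right fact}, I would multiply this expression by $N_D(\lambda)^T$ on the right. The bottom block gives $K_D N_D^T = 0$ by duality of $K_D$ and $N_D$, while the top block becomes $M_D N_D^T + C A^{-1} M_B N_D^T = D(\lambda) + C(\lambda)A(\lambda)^{-1}B(\lambda) = R(\lambda)$, using the hypotheses $M_D N_D^T = D$ and $M_B N_D^T = B$. For the left factorization \eqref{eq:left fact}, I would compute
\[
\begin{bmatrix} I_p & -M_R(\lambda)\widehat{N}_D(\lambda)^T \end{bmatrix} \widehat{R}(\lambda) = M_R(\lambda) - M_R(\lambda)\widehat{N}_D(\lambda)^T K_D(\lambda) = M_R(\lambda)\bigl(I - \widehat{N}_D(\lambda)^T K_D(\lambda)\bigr),
\]
and then apply the identity $\widehat{N}_D^T K_D + N_D^T \widehat{K}_D = I$, which is exactly the relation $U_D(\lambda)^{-1} U_D(\lambda) = I$ from \eqref{eq:embedding}. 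This converts the expression into $M_R(\lambda) N_D(\lambda)^T \widehat{K}_D(\lambda)$, and the already-established identity $M_R N_D^T = R$ from the right-factorization computation finishes the proof.

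There is no real obstacle; the argument is purely symbolic. The only mildly delicate step is recognizing the factoring trick $\left[\begin{smallmatrix}X\\0\end{smallmatrix}\right] = \left[\begin{smallmatrix}A\\0\end{smallmatrix}\right] A^{-1} X$ that lets one exploit $L_A N_A^T = \left[\begin{smallmatrix}A\\0\end{smallmatrix}\right]$ inside the Schur complement; beyond that, the proof is just a careful bookkeeping of the defining identities for block minimal basis pencils and their dual bases.
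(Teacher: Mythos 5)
Your proof is correct and essentially identical to the paper's own argument: the same closed-form computation of $\widehat{R}(\lambda)=\left[\begin{smallmatrix} M_R(\lambda)\\ K_D(\lambda)\end{smallmatrix}\right]$ via the identity $L_A(\lambda)^{-1}\left[\begin{smallmatrix} X(\lambda)\\ 0\end{smallmatrix}\right]=N_A(\lambda)^TA(\lambda)^{-1}X(\lambda)$, the same right multiplication by $N_D(\lambda)^T$ using $K_D(\lambda)N_D(\lambda)^T=0$, and the same use of $I-\widehat{N}_D(\lambda)^TK_D(\lambda)=N_D(\lambda)^T\widehat{K}_D(\lambda)$ coming from $U_D(\lambda)^{-1}U_D(\lambda)=I$. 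The only cosmetic omission is that you should note, as the paper does, that $L_A(\lambda)$ is regular (being a linearization of the regular matrix $A(\lambda)$), so that writing $L_A(\lambda)^{-1}$ is legitimate.
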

\begin{proof}
Notice that the matrix pencil $L_{A}(\la)=\left[\begin{smallmatrix}
	M_{A}(\la)  \\
	K_{A} (\lambda) \end{smallmatrix}\right]$ is regular since $L_{A}(\la)$ is a linearization of $A(\la)$ and the polynomial matrix $A(\la)$ is regular. 
Then, from 
	$\mathcal{L}_A(\lambda) N_{A}(\la)^{T} = \left[	\begin{smallmatrix}
	A(\la) \\
	0 \end{smallmatrix}\right]$,
we obtain
\begin{equation}\label{eq:inverse}
\left[	\begin{matrix}
M_{A}(\la)  \\
K_{A} (\lambda) \end{matrix}\right]^{-1} \left[	\begin{matrix}
I_n \\
0 \end{matrix}\right]=N_A(\la)^{T}A(\la)^{-1}.
\end{equation}
Hence, the transfer function matrix of $\mathcal{L}(\la)$ is given by
	\begin{align}
	\nonumber
	\wh R (\la)= & \left[	\begin{array}{c}
	M_{D}(\la)  \\
	K_{D} (\lambda) \end{array}\right] + 
	\left[	\begin{array}{c}
	M_{C}(\la)  \\
	0\end{array}\right] \left[	\begin{array}{c}
	M_{A}(\la)  \\
	K_{A} (\lambda) \end{array}\right]^{-1} 
	\left[	\begin{array}{c}
	M_{B}(\la)  \\
	0 \end{array}\right] = \\ \label{eq:R hat}
	&\begin{bmatrix}
	M_D(\lambda) + C(\lambda) A(\lambda)^{-1}M_B(\lambda) \\
	K_D(\lambda)
	\end{bmatrix},
\end{align}  
where we have used $M_{C}(\la)N_{A}(\la)^{T}=C(\la)$.

Multiplying \eqref{eq:R hat} on the right by $N_{D}(\la)^{T}$ yields
\begin{equation*}
	\wh R(\la)N_{D}(\la)^{T}  = \left[\begin{array}{c}
	D(\la)+ C(\la)A(\la)^{-1}B(\la)  \\
	0 \end{array}\right] = \left[	\begin{array}{c}
	R(\la) \\
	0 \end{array}\right],
	\end{equation*}
where we have used $M_{D}(\la)N_{D}(\la)^{T}=D(\la)$ and $M_{B}(\la)N_{D}(\la)^{T}=B(\la)$.
This establishes the right-sided factorization \eqref{eq:right fact}.

Multiplying \eqref{eq:R hat} on the left by $\left[\begin{smallmatrix}
I_p & -\left(M_D(\lambda)+C(\lambda)A(\lambda)^{-1}M_B(\lambda) \right)\widehat{N}_D(\lambda)^T\end{smallmatrix}\right]$  gives
\begin{align*}
&\begin{bmatrix}
	I_p & -\left(M_D(\lambda)+C(\lambda)A(\lambda)^{-1}M_B(\lambda) \right)\widehat{N}_D(\lambda)^T\end{bmatrix} \widehat{R}(\lambda) 
= \\
&\hspace{3cm} (M_D(\lambda)+C(\lambda)A(\lambda)^{-1}M_B(\lambda) )(I_{(\rho_D+1)m}-\widehat{N}_D(\lambda)^TK_D(\lambda)) = \\
&\hspace{3cm} (M_D(\lambda)+C(\lambda)A(\lambda)^{-1}M_B(\lambda) )N_D(\lambda)^T \widehat{K}_D(\lambda) = R(\lambda) \widehat{K}_D(\lambda),
\end{align*}
where $\rho_D = \deg N_D(\lambda)$. 
This establishes the left-sided factorization \eqref{eq:left fact}.
\end{proof}

\subsection{Recovery of minimal bases and minimal indices}

 In this section, we assume that the rational matrix $R(\lambda)$ is singular and show how to recover the right and left minimal indices and minimal bases of $R(\lambda)$ from those of a linearization $\mathcal{L}(\lambda)$ of $R(\lambda)$ as in Theorem \ref{thm:linearization}.

We begin with Lemma \ref{Lemma:from R to Rhat}, which establishes a bijection between the nullspaces of $R(\lambda)$ and the transfer function matrix of $\mathcal{L}(\lambda)$.


\begin{lem}\label{Lemma:from R to Rhat}
Let $R(\la)\in\F(\la)^{p\times m}$ be a rational matrix expressed in the form $R(\la)=D(\la)+C(\la)A(\la)^{-1}B(\la)$, for some nonsingular polynomial matrix  $A(\la)\in\FF[\la]^ {n\times n}$ and polynomial matrices $B(\la)\in\FF[\la]^{n\times m}$, $C(\la)\in\FF[\la]^{p\times n}$ and $D(\la)\in\FF[\la]^{p\times m}$.
Consider the linear polynomial matrix
\[
	\mathcal{L}(\la) = \left[
	\begin{array}{c|c}
	M_A(\la) & \phantom{a} M_B(\la) \phantom{a} \\
	K_A(\la) & 0 \\ \hline \phantom{\Big|}
	-M_C(\la)  \phantom{\Big|}& M_D(\la) \\
	0 & K_D (\la)
	\end{array}
	\right],
\]
as in Theorem \ref{thm:linearization}, and let $\wh R(\la)$ be the transfer function matrix of $\mathcal{L}(\lambda)$. Then the following statements hold:
\begin{enumerate}
\item[\rm (a)] The linear map
\begin{align*}
M_r \, : \, \mathcal{N}_r(R) & \longrightarrow \mathcal{N}_r(\widehat{R}) \\
 x(\lambda) & \longmapsto \widehat{x}(\lambda) := N_D(\lambda)^Tx(\lambda)
\end{align*}
is a bijection between the right nullspaces of $R(\lambda)$ and $\widehat{R}(\lambda)$. 
\item[\rm(b)] The linear map
\begin{align*}
M_\ell \,:\, \mathcal{N}_\ell(R) & \longrightarrow \mathcal{N}_\ell(\widehat{R}) \\
y(\lambda)^T & \longmapsto \widehat{y}(\lambda)^T := y(\lambda)^T \begin{bmatrix}
	I_p & -M_{R}(\la)\widehat{N}_D(\lambda)^T\end{bmatrix}
\end{align*}
is a bijection between the left nullspaces of $R(\lambda)$ and $\widehat{R}(\lambda)$, where $M_{R}(\la):=M_D(\lambda)+C(\lambda)A(\lambda)^{-1}M_B(\lambda) $ and $\widehat{N}_D(\lambda)$ is defined in \eqref{eq:embedding}. 
\end{enumerate}
\end{lem}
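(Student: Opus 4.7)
The plan is to verify, for each of the two $\F(\lambda)$-linear maps in turn, three things: that it is well-defined into the claimed target nullspace, that it is injective, and that the source and target nullspaces have the same $\F(\lambda)$-dimension. Linearity of both $M_r$ and $M_\ell$ is clear from their defining formulas, and so the combination of these three properties gives the stated bijections. The heart of the argument is that Theorem \ref{thm:factorizations} already supplies the one-sided factorizations that deliver well-definedness and injectivity immediately, while the only remaining piece is a dimension count.

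For part (a), I would use the right-sided factorization \eqref{eq:right fact}: if $R(\lambda)x(\lambda)=0$, right-multiplying by $x(\lambda)$ yields $\widehat{R}(\lambda)N_D(\lambda)^T x(\lambda)=0$, so $M_r(x)\in\mathcal{N}_r(\widehat{R})$. Injectivity is immediate from the minimal-basis property: $N_D(\lambda)^T$ has full column rank over $\F(\lambda)$ by Theorem \ref{Thm:charac min bases}, so $N_D(\lambda)^T x(\lambda)=0$ forces $x(\lambda)=0$. Part (b) is entirely analogous via \eqref{eq:left fact}: left-multiplying by $y(\lambda)^T$ gives $\widehat{y}(\lambda)^T\widehat{R}(\lambda)=y(\lambda)^T R(\lambda)\widehat{K}_D(\lambda)=0$, while the presence of the block $I_p$ in the formula for $M_\ell$ forces $y(\lambda)^T=0$ whenever $\widehat{y}(\lambda)^T=0$.

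The one slightly technical piece, and the closest thing to a genuine obstacle, is matching the nullspace dimensions; but this too reduces to rank bookkeeping already carried out. Apply \eqref{ranks_rel} to $\mathcal{L}(\lambda)$ viewed as a polynomial system matrix with state matrix $L_A(\lambda)$ (which is regular since it linearizes the regular polynomial matrix $A(\lambda)$) and transfer function matrix $\widehat{R}(\lambda)$, to get $\rank\mathcal{L}(\lambda)=n(\rho_A+1)+\rank\widehat{R}(\lambda)$, with $\rho_A:=\deg N_A(\lambda)$ and $\rho_D:=\deg N_D(\lambda)$. On the other hand, the unimodular reduction carried out at \eqref{eq:unimodular1} together with Lemma \ref{lemma:unimodular} requires only the regularity of $A(\lambda)$, not the eigenvalue hypothesis \eqref{eq:mat_minimality}, and therefore identifies $\mathcal{L}(\lambda)$ as unimodularly equivalent over $\F[\lambda]$ to $\diag(P(\lambda),I_{n\rho_A+m\rho_D})$, where $P(\lambda)$ is the polynomial system matrix of $R(\lambda)$; combined with a further application of \eqref{ranks_rel} to $P(\lambda)$ this gives $\rank\mathcal{L}(\lambda)=n+\rank R(\lambda)+n\rho_A+m\rho_D$. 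Comparing the two expressions produces $\rank\widehat{R}(\lambda)=\rank R(\lambda)+m\rho_D$. The rank-nullity theorem, using the sizes $p\times m$ of $R(\lambda)$ and $(p+m\rho_D)\times(m+m\rho_D)$ of $\widehat{R}(\lambda)$, then yields $\dim\mathcal{N}_r(R)=\dim\mathcal{N}_r(\widehat{R})$ and $\dim\mathcal{N}_\ell(R)=\dim\mathcal{N}_\ell(\widehat{R})$, so the two injective linear maps are bijective. No substantive obstacle is expected beyond tracking these ranks and sizes correctly.
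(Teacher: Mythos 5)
Your proposal is correct and follows essentially the same route as the paper's proof: well-definedness from the one-sided factorizations of Theorem \ref{thm:factorizations}, injectivity from the full column rank of $N_D(\la)^T$ in part (a) (and from the $I_p$ block in part (b)), and a nullspace-dimension count to upgrade the injective linear maps to bijections. The only difference is presentational: where you re-derive $\rank \widehat{R}(\la)=\rank R(\la)+m\rho_D$ explicitly from the unimodular reduction \eqref{eq:unimodular1}, Lemma \ref{lemma:unimodular} and \eqref{ranks_rel} (correctly observing that this step needs only the regularity of $A(\la)$, not the minimality hypothesis \eqref{eq:mat_minimality}), the paper reaches the same dimension equality more tersely by noting that $\mathcal{L}(\la)$ is, by Theorem \ref{thm:linearization}, a linearization of $R(\la)$ at any point $\la_0$ with $\det A(\la_0)\neq 0$.
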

\begin{proof}
We will prove part (a).
Part (b) can be proved analogously.

That the map $M_r$ is a linear map from the right nullspace of $R(\lambda)$ to the right nullspace $\widehat{R}(\lambda)$ follows immediately from the right-sided factorization \eqref{eq:right fact}.
Moreover, by Theorem \ref{thm:linearization}, $\mathcal{L}(\lambda)$ is a linearization of $R(\lambda)$, at least, at some point $\la_0\in\F$. 
Indeed, since  $A(\la)$ is regular, there exists $\la_0\in\F$ such that $A(\la_0)$ is invertible. 
This implies that the realization $D(\la)+C(\la)A(\la)^{-1}B(\la)$ is minimal at $\la_0$.
Hence, we have $\dim \mathcal{N}_r(R) = \dim \mathcal{N}_r(\widehat{R})$.
Thus, to show that $M_r$ is a bijection, if suffices to show that it is injective.
So, suppose $\widehat{y}(\lambda) = N_D(\lambda)^Ty(\lambda) = 0$.
Since $N_D(\lambda)$ is a minimal basis, $N_D(\lambda)^T$ has full column rank.
Hence, $N_D(\lambda)^Ty(\lambda) = 0$ implies $y(\lambda)=0$.
This establishes the injectivity of the linear map $M_r$.
\end{proof}

\begin{rem}\rm
 Since the maps in Lemma \ref{Lemma:from R to Rhat} are bijections, they preserve linear independence and allow us to recover bases of the right (resp. left) nullspace of $R(\la)$ from bases of the right (resp. left) nullspace of $\wh R(\la)$, and conversely. 
\end{rem}

	
 By combining Lemmas \ref{Lemma: from R to P} and \ref{Lemma:from R to Rhat}, we obtain Theorem \ref{thm:from R to L}, which establishes a bijection between the nullspaces of $R(\lambda)$ and $\mathcal{L}(\lambda)$.
	\begin{theo}\label{thm:from R to L}
		Let $R(\la)\in\F(\la)^{p\times m}$ be a rational matrix expressed in the form $R(\la)=D(\la)+C(\la)A(\la)^{-1}B(\la)$, for some nonsingular polynomial matrix  $A(\la)\in\FF[\la]^ {n\times n}$ and polynomial matrices $B(\la)\in\FF[\la]^{n\times m}$, $C(\la)\in\FF[\la]^{p\times n}$ and $D(\la)\in\FF[\la]^{p\times m}$.
		Consider the linear polynomial matrix
		\[
		\mathcal{L}(\la) = \left[
		\begin{array}{c|c}
		M_A(\la) & \phantom{a} M_B(\la) \phantom{a} \\
		K_A(\la) & 0 \\ \hline \phantom{\Big|}
		-M_C(\la)  \phantom{\Big|}& M_D(\la) \\
		0 & K_D (\la)
		\end{array}
		\right],
		\]
		as in Theorem \ref{thm:linearization}, and let $\wh R(\la)$ be the transfer function matrix of $\mathcal{L}(\lambda)$. Then the following statements hold:
		\begin{enumerate}
			\item[\rm (a)] The linear map
			\begin{align*}
			F_r \, : \, \mathcal{N}_r(R) & \longrightarrow \mathcal{N}_r(\mathcal{L}) \\
			x(\lambda) & \longmapsto \widetilde{x}(\lambda) :=\begin{bmatrix}
			 -N_{A}(\la)^{T}A(\la)^{-1}B(\la)\\
             N_D(\lambda)^T
			\end{bmatrix} x(\la)
			\end{align*}
			is a bijection between the right nullspaces of $R(\lambda)$ and $\mathcal{L}(\lambda)$. 
			\item[\rm(b)] The linear map
			\begin{align*}
			F_\ell \,:\, \mathcal{N}_\ell(R) & \longrightarrow \mathcal{N}_\ell(\mathcal{L}) \\
			y(\lambda)^T & \longmapsto \widetilde{y}(\lambda)^T := y(\lambda)^T \begin{bmatrix}
			M_{C}(\la)L_{A}(\la)^{-1} & I_p & -M_{R}(\la)\widehat{N}_D(\lambda)^T\end{bmatrix}
			\end{align*}
			is a bijection between the left nullspaces of $R(\lambda)$ and $\mathcal{L}(\lambda)$, where $L_{A}(\la):=\begin{bmatrix}
			M_{A}(\la)\\
			K_{A}(\la)
			\end{bmatrix}$, $M_{R}(\la):=M_D(\lambda)+C(\lambda)A(\lambda)^{-1}M_B(\lambda) $ and $\widehat{N}_D(\lambda)$ is defined in \eqref{eq:embedding}. 
		\end{enumerate}
	\end{theo}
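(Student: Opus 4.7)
The plan is to obtain $F_r$ and $F_\ell$ as compositions of the bijections provided by Lemmas \ref{Lemma:from R to Rhat} and \ref{Lemma: from R to P}, following the scheme displayed at the start of Section \ref{sec:recovery}. Since $L_A(\la)$ is a linearization of the regular polynomial matrix $A(\la)$, the pencil $L_A(\la)$ is regular, so Lemma \ref{Lemma: from R to P} applies to $\mathcal{L}(\la)$ viewed as a polynomial system matrix with state matrix $L_A(\la)$ and transfer function matrix $\widehat{R}(\la)$; here the ``$B$'' and ``$C$'' blocks in the sense of Lemma \ref{Lemma: from R to P} are $\left[\begin{smallmatrix} M_B(\la) \\ 0 \end{smallmatrix}\right]$ and $\left[\begin{smallmatrix} M_C(\la) \\ 0 \end{smallmatrix}\right]$, respectively. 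Bijectivity of $F_r$ and $F_\ell$ will then follow automatically from the composition of bijections, and only the explicit closed-form expressions in (a) and (b) need to be checked.

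For part (a), I first apply $M_r$ from Lemma \ref{Lemma:from R to Rhat} to get $\widehat{x}(\la) = N_D(\la)^T x(\la)$, and then apply $T_r$ from Lemma \ref{Lemma: from R to P}(a) to obtain
\[
F_r(x(\la)) = \begin{bmatrix} -L_A(\la)^{-1} \left[\begin{smallmatrix} M_B(\la) \\ 0 \end{smallmatrix}\right] N_D(\la)^T \\ N_D(\la)^T \end{bmatrix} x(\la).
\]
The key simplification comes from identity \eqref{eq:inverse}, namely $L_A(\la)^{-1}\left[\begin{smallmatrix} I_n \\ 0 \end{smallmatrix}\right] = N_A(\la)^T A(\la)^{-1}$, combined with $M_B(\la) N_D(\la)^T = B(\la)$. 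Together these yield
\[
L_A(\la)^{-1}\left[\begin{smallmatrix} M_B(\la) \\ 0 \end{smallmatrix}\right] N_D(\la)^T = L_A(\la)^{-1}\left[\begin{smallmatrix} I_n \\ 0 \end{smallmatrix}\right] B(\la) = N_A(\la)^T A(\la)^{-1} B(\la),
\]
which matches the top block of the stated formula for $F_r$.

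For part (b), I would compose $M_\ell$ with $T_\ell$ from Lemma \ref{Lemma: from R to P}(b), which yields
\[
F_\ell(y(\la)^T) = y(\la)^T \begin{bmatrix} I_p & -M_R(\la)\widehat{N}_D(\la)^T \end{bmatrix} \begin{bmatrix} \left[\begin{smallmatrix} M_C(\la) \\ 0 \end{smallmatrix}\right] L_A(\la)^{-1} & I_{p+m\rho_D} \end{bmatrix}.
\]
A direct block multiplication collapses $\begin{bmatrix} I_p & -M_R(\la)\widehat{N}_D(\la)^T \end{bmatrix} \left[\begin{smallmatrix} M_C(\la) \\ 0 \end{smallmatrix}\right]$ to $M_C(\la)$, so the overall product becomes $y(\la)^T \begin{bmatrix} M_C(\la) L_A(\la)^{-1} & I_p & -M_R(\la)\widehat{N}_D(\la)^T \end{bmatrix}$, exactly the claimed formula.

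The main obstacle, such as it is, reduces to careful block bookkeeping: identifying the correct ``$B$'' and ``$C$'' blocks of $\mathcal{L}$ when invoking Lemma \ref{Lemma: from R to P}, and checking that the indicated simplifications follow from the defining relations $M_B N_D^T = B$ and $M_C N_A^T = C$ together with \eqref{eq:inverse}. No substantive difficulty arises beyond this, because the proof structurally reduces to composing two already-established bijections and simplifying the resulting matrix products.
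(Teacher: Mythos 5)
Your proposal is correct and takes essentially the same route as the paper: the paper also proves part (a) by exhibiting $F_r$ as the composition $T_r\circ M_r$ of the bijections from Lemmas \ref{Lemma: from R to P} and \ref{Lemma:from R to Rhat}, simplifying via \eqref{eq:inverse} and $M_B(\lambda)N_D(\lambda)^T=B(\lambda)$, and then states that part (b) is analogous. Your explicit block computation for $F_\ell=T_\ell\circ M_\ell$ merely spells out that omitted analogous step, with the correct identification of the ``$B$'' and ``$C$'' blocks of $\mathcal{L}(\lambda)$ and of the regular state matrix $L_A(\lambda)$.
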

	\begin{proof}  We will prove part (a).
		Part (b) can be proved analogously.
		
		Consider the linear bijections $T_r$ and $M_r$ in Lemma \ref{Lemma: from R to P} and Lemma \ref{Lemma:from R to Rhat}, respectively. 
		Then, $F_r$ is the composition $F_r=T_r\circ M_r$. 
		Indeed, we have
		\begin{align*}
		F_r \, : \, \mathcal{N}_r(R) & \longrightarrow \mathcal{N}_r(\wh{R}) & \longrightarrow  &\,\, \mathcal{N}_r(\mathcal{L}) \\
		x(\lambda) & \longmapsto  N_D(\lambda)^Tx(\lambda) &  \longmapsto & \left[\begin{array}{c}
		-\left[\begin{array}{c}
		M_{A}(\la)  \\
		K_{A} (\lambda) 	\end{array} \right]^{-1}\left[\begin{array}{c}
		M_B(\la) \\
		0 	\end{array} \right]N_D(\lambda)^Tx(\lambda)   \\ 
		N_D(\lambda)^Tx(\lambda)
		\end{array} \right] = \\
		&&&  \begin{bmatrix}
		-N_A(\lambda)^TA(\lambda)^{-1}B(\lambda) x(\lambda) \\
		N_D(\lambda)^Tx(\lambda)
		\end{bmatrix},
		\end{align*}
		where we have used \eqref{eq:inverse} and $M_{B}(\la)N_D(\la)^T=B(\la)$. 
		
	\end{proof}


 Lemma \ref{Lemma:degrees} will allow us to prove that, under some minimality conditions, the linear polynomial system matrix $\mathcal{L}(\lambda)$ and its transfer function $\widehat{R}(\lambda)$ have the same right and left minimal indices.
	\begin{lem}\label{Lemma:degrees}
		Consider a linear polynomial system matrix 
	\begin{equation*}
		L(\la)=\left[\begin{array}{c|c}
		A_1 \la +A_0 &B_1 \la +B_0\\\hline \phantom{\Big|} -(C_1 \la +C_0)&D_1 \la +D_0
		\end{array}\right]\in\F[\la]^{(n+q)\times (n+r)},
		\end{equation*} with state matrix $A_1 \la +A_0\in\mathbb{F}[\lambda]^{n\times n}$ and transfer function matrix $T(\la).$ Then the following statements hold.
		\begin{itemize}
			\item[\rm(a)] If $u(\la)=\begin{bmatrix}y(\la)\\x(\la)\end{bmatrix}\in\mathcal{N}_{r}(L)$, then $x(\la)\in\mathcal{N}_{r}(T).$ 
			In addition, if $\rank\begin{bmatrix} A_1 \\ C_1 \end{bmatrix}=n$ and $u(\la)$ is a polynomial vector, then $\deg u(\la)=\deg x(\la).$
			\item[\rm(b)] If $v(\la)^T=\begin{bmatrix}\tilde y(\la)^T & \tilde x(\la)^T\end{bmatrix}\in\mathcal{N}_{\ell}(L)$, then $\tilde x(\la)^T\in\mathcal{N}_{\ell}(T).$ 
			In addition, if $\rank\begin{bmatrix} A_1 & B_1 \end{bmatrix}=n$ and $v(\la)$ is a polynomial vector, then $\deg v(\la)=\deg \tilde x(\la).$
			
		\end{itemize}
	\end{lem}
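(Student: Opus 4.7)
\textbf{Proof plan for Lemma \ref{Lemma:degrees}.} The membership statements $x(\la)\in\mathcal{N}_r(T)$ and $\tilde x(\la)^T\in\mathcal{N}_\ell(T)$ are direct Schur-complement calculations. In part (a), the block LDU factorization of $L(\la)$ analogous to the one recalled in Section~\ref{prelim} gives
\[
\begin{bmatrix} I_n & 0 \\ (C_1\la+C_0)(A_1\la+A_0)^{-1} & I_q \end{bmatrix} L(\la) = \begin{bmatrix} A_1\la+A_0 & B_1\la+B_0 \\ 0 & T(\la) \end{bmatrix},
\]
which is valid since the state matrix is regular. Writing $L(\la)u(\la)=0$ with $u(\la)=\begin{bmatrix}y(\la)\\x(\la)\end{bmatrix}$ and reading off the bottom block of the reduced system yields $T(\la)x(\la)=0$. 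Part (b) is symmetric: multiplying $L(\la)$ on the right by the dual unimodular factor produces a block-triangular matrix with $T(\la)$ in the $(2,2)$ block, and the same reading gives $\tilde x(\la)^T T(\la)=0$.

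\textbf{Degree part in (a).} Now assume $u(\la)$ is a polynomial vector, and put $d:=\deg x(\la)$, $e:=\deg y(\la)$, so that $\deg u(\la)=\max\{d,e\}$. It suffices to prove $e\leq d$. Written out, the block equations $L(\la)u(\la)=0$ are
\begin{align*}
(A_1\la+A_0)\,y(\la) + (B_1\la+B_0)\,x(\la) &= 0,\\
-(C_1\la+C_0)\,y(\la) + (D_1\la+D_0)\,x(\la) &= 0.
\end{align*}
Assume for contradiction that $e>d$, and let $y_e\neq 0$ be the leading coefficient of $y(\la)$. The coefficient of $\la^{e+1}$ in the first equation equals $A_1 y_e$, because the second summand has degree at most $d+1<e+1$ and therefore contributes nothing at this degree; likewise the coefficient of $\la^{e+1}$ in the second equation equals $-C_1 y_e$. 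Hence $\begin{bmatrix}A_1\\C_1\end{bmatrix}y_e=0$, contradicting the hypothesis that this block has full column rank $n$. Therefore $e\leq d$, and $\deg u(\la)=d=\deg x(\la)$.

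\textbf{Part (b) and main obstacle.} Part (b) is obtained by the same reasoning applied to $v(\la)^T L(\la)=0$: assuming $\deg \tilde y(\la)>\deg \tilde x(\la)$, the leading coefficients on the two block columns force $\tilde y_e^T A_1 = 0$ and $\tilde y_e^T B_1 = 0$, i.e.\ $\tilde y_e^T\begin{bmatrix}A_1 & B_1\end{bmatrix}=0$, which contradicts the full row rank hypothesis on $\begin{bmatrix}A_1 & B_1\end{bmatrix}$. There is no substantive obstacle here; the only delicate point is that the contradiction is derived from the strict inequality $e>d$ rather than $e\geq d$ (in the borderline case $e=d$ the leading terms of $(B_1\la+B_0)x(\la)$ and $(D_1\la+D_0)x(\la)$ also contribute at degree $e+1$ and the argument breaks), but this is harmless because the target conclusion $\deg u(\la)=\deg x(\la)$ only needs $e\leq d$, which the strict case is already enough to secure.
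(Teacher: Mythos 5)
Your proof is correct and takes essentially the same route as the paper: the membership claims follow from the Schur-complement structure of $L(\lambda)$ (the paper invokes its Lemma \ref{Lemma: from R to P}, whose proof is the same computation), and the degree claims are proved by the identical leading-coefficient contradiction, extracting the coefficient of $\lambda^{e+1}$ under the assumption $\deg y(\lambda) > \deg x(\lambda)$ to get $\left[\begin{smallmatrix} A_1 \\ C_1 \end{smallmatrix}\right]y_e=0$ (resp.\ $\tilde{y}_e^T\left[\begin{smallmatrix} A_1 & B_1 \end{smallmatrix}\right]=0$). The only quibble is terminological: the block-triangular factors you multiply by are rational matrices, not unimodular polynomial ones, which is harmless since the cancellation takes place over $\mathbb{F}(\lambda)$.
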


\begin{rem}\rm  We emphasize that
	\begin{equation}\label{min_infinity}
\rank\begin{bmatrix} A_1 \\ C_1 \end{bmatrix}=\rank\begin{bmatrix} A_1 & B_1 \end{bmatrix}=n
	\end{equation}
	is the condition for a linear polynomial system matrix to be minimal at infinity in the sense of \cite{local}, which is also a necessary condition for a linear polynomial system matrix to be a linearization at infinity \cite{local}. 
In \cite{linearsystems} there is a procedure to reduce any linear polynomial system matrix to one satisfying condition \eqref{min_infinity}.
\end{rem}
	
\begin{proof} {\rm (of Lemma \ref{Lemma:degrees})}
 We only prove part (a) since part (b) follows from a similar argument. From Lemma \ref{Lemma: from R to P}, we obtain that if $u(\lambda)\in\mathcal{N}_r(L)$, then the vector $u(\lambda)$ must be of the form 
\[
u(\lambda) =
\begin{bmatrix}
y(\lambda) \\ x(\lambda)
\end{bmatrix} =
\begin{bmatrix} -(A_1\lambda+A_0)^{-1}(B_1\lambda + B_0)x(\lambda)\\x(\lambda) \end{bmatrix},
\]
for some $x(\lambda)\in\mathcal{N}_r(T)$, as we wanted to show.

For proving that $\deg u(\lambda) = \deg x(\lambda)$, we will show that $\deg y(\lambda)\leq  \deg x(\lambda)$ by contradiction.
Let us assume that $\ell:=\deg y(\lambda) > \deg x(\lambda)$. 
Then, the vector $u(\lambda)$ must be of the form
\[
u(\lambda)= \begin{bmatrix} y_\ell \\ 0 \end{bmatrix} \lambda^\ell + \mbox{ lower order terms}, \quad \mbox{with }y_\ell\neq 0.
\]
Since $u(\la)\in\mathcal{N}_{r}(L),$ we have
\begin{align*}
&( A_1 \la + A_0)y(\la)+( B_1 \la + B_0)x(\la)=0, \\
&( C_1 \la + C_0)y(\la)-( D_1 \la + D_0)x(\la)=0.
\end{align*}
Considering the highest degree terms in the left hand side of the two equations above, we obtain
		\begin{equation*}
		\begin{bmatrix}A_1\\C_1\end{bmatrix}y_\ell=0.
		\end{equation*}
Since the matrix $\left[\begin{smallmatrix} A_1 \\ C_1 \end{smallmatrix}\right]$ has full column rank by assumption, we get $y_\ell=0$, which contradicts our original hypothesis.
\end{proof}

For completeness, in Lemma \ref{Lemma:ranks} we recall \cite[Lemma 2]{VVK79} and different versions of it that can be analogously proved.

	 \begin{lem}\label{Lemma:ranks} 
	 	Let $\begin{bmatrix}X_1&X_2\end{bmatrix}\begin{bmatrix}Y_1 \\Y_2\end{bmatrix}=0$. 
	 	\begin{itemize}
	 		\item[\rm(a)]Assume that $\begin{bmatrix}Y_1 \\Y_2\end{bmatrix}$ has full column rank.
	 		\begin{itemize}
	 			\item[\rm(a1)] If $X_1$ has full column rank, then $Y_2$ also has full column rank.
	 			\item[\rm(a2)] If $X_2$ has full column rank, then $Y_1$ also has full column rank.
	 	\end{itemize}	
	 		\item[\rm(b)]Assume that $\begin{bmatrix}X_1&X_2\end{bmatrix}$ has full row rank.
	 			\begin{itemize}
	 			\item[\rm(b1)] If $Y_1$ has full row rank, then $X_2$ also has full row rank.
	 			\item[\rm(b2)] If $Y_2$ has full row rank, then $X_1$ also has full row rank.
	 		\end{itemize}	
	 	\end{itemize}
	 \end{lem}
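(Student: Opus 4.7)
The lemma is a small abstract fact about two blocks whose product vanishes, and the plan is a direct null space / row space argument, essentially repeating the same idea four times with obvious symmetries.

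Expand the matrix equation as $X_1 Y_1 + X_2 Y_2 = 0$; all four statements will follow by propagating the injectivity (or surjectivity) of one factor through this identity. For part (a1), I would argue as follows. Take any vector $v$ with $Y_2 v = 0$; our goal is to force $v=0$. Multiplying $X_1 Y_1 = -X_2 Y_2$ on the right by $v$ gives $X_1 Y_1 v = 0$. Since $X_1$ has full column rank, it is injective, so $Y_1 v = 0$. Combining this with $Y_2 v = 0$ we obtain $\bigl[\begin{smallmatrix} Y_1 \\ Y_2 \end{smallmatrix}\bigr] v = 0$, and the full column rank hypothesis on this stacked matrix yields $v=0$. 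Hence $Y_2$ has full column rank. Part (a2) is the mirror image, obtained by exchanging the roles of the subscripts $1$ and $2$ throughout.

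For part (b), I would dualize: working from the same identity $X_1 Y_1 + X_2 Y_2 = 0$, take any row vector $w^T$ with $w^T X_2 = 0$; using full row rank of $Y_1$ (equivalently, surjectivity of right-multiplication by $Y_1$) and the identity $w^T X_1 Y_1 = -w^T X_2 Y_2 = 0$, conclude $w^T X_1 = 0$, then $w^T [X_1 \; X_2] = 0$, and finally $w^T = 0$ by the full row rank of $[X_1\; X_2]$. This proves (b1); part (b2) is obtained by swapping indices $1\leftrightarrow 2$.

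There is essentially no obstacle: the argument is purely linear and uses only the definition of full column/row rank together with the stacked/augmented rank hypothesis. The only thing to be careful about is the direction of the implication used for each block (injectivity vs.\ surjectivity), and to state the four cases cleanly so that the reader sees the symmetry, rather than writing out four near-identical proofs. I would therefore present one case in full, note the symmetric exchange of subscripts for the second case in each part, and indicate that parts (a) and (b) are related by transposition, so only (a1) really needs a detailed writeup.
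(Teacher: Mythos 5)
Your proof is correct: the null-space argument for (a1) (propagating $Y_2v=0$ through $X_1Y_1v=-X_2Y_2v=0$ via injectivity of $X_1$, then invoking full column rank of the stacked matrix), together with the index swap for (a2) and transposition for part (b), establishes all four statements. Note that the paper itself gives no proof of this lemma at all --- it simply recalls it from \cite[Lemma 2]{VVK79} and remarks that the variants ``can be analogously proved'' --- so your write-up supplies exactly the elementary argument that citation leaves implicit.
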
 
 
We are finally ready to state and prove the main results of this section, Theorems \ref{thm:right_minimalindices} and \ref{thm:left_minimalindices}.
These theorems show how to recover right and left minimal bases and minimal indices of rational matrices from those of their linearizations in Theorem \ref{thm:linearization}. 
\begin{theo}[Right minimal bases and minimal indices]\label{thm:right_minimalindices}
Let $R(\la)\in\F(\la)^{p\times m}$ be a rational matrix expressed in the form $R(\la)=D(\la)+C(\la)A(\la)^{-1}B(\la)$, for some nonsingular polynomial matrix  $A(\la)\in\FF[\la]^ {n\times n}$ and polynomial matrices
		$B(\la)\in\FF[\la]^{n\times m}$, $C(\la)\in\FF[\la]^{p\times n}$ and $D(\la)\in\FF[\la]^{p\times m}$.
Consider the linear polynomial matrix 
\begin{equation*} 
		\mathcal{L}(\la) = \left[
		\begin{array}{c|c}
		M_A(\la) & \phantom{a} M_B(\la) \phantom{a} \\
		K_A(\la) & 0 \\ \hline \phantom{\Big|}
		-M_C(\la)  \phantom{\Big|}& M_D(\la) \\
		0 & K_D (\la)
		\end{array}
		\right]\in\mathbb{F}[\lambda]^{(n+p+\rho_An+\rho_Dm)\times(n+m+\rho_An+\rho_Dm)},
\end{equation*}
as in Theorem \ref{thm:linearization}, where $	L_{A}(\la)=
		\left[
		\begin{smallmatrix}
		M_{A}(\la)  \\
		K_{A} (\lambda) 
		\end{smallmatrix}
		\right] $ and $L_{D}(\la) =
		\left[
		\begin{smallmatrix}
		M_{D}(\la)  \\
		K_{D} (\lambda) 
		\end{smallmatrix}
		\right]
		$ are strong block minimal basis pencils associated with $A(\la)$ and $D(\la),$ respectively, and $\rho_i:=\deg N_{i}(\la)$, for $i\in\{A,D\}$.
Let $\wh R(\la)$ be the transfer function matrix of $\mathcal{L}(\la)$.
If
\begin{align}
\label{eq:right min ind 1}
&\rank \begin{bmatrix} A(\la_0) \\ C(\la_0) \end{bmatrix}=n \quad \mbox{for all} \quad \la_0\in\F, \quad \mbox{and} \\
\label{eq:right min ind 2}
&\rank\begin{bmatrix}\rev_{\rho_A +1}A(0)\\ \rev_{\rho_A +1}C(0) \end{bmatrix}=n,
\end{align}
then the following statements hold:
		\begin{itemize}
\item[\rm (a)] If $\left\{\left[\begin{smallmatrix}y_{i}(\la)\\x_{i}(\la)\end{smallmatrix}\right]\right\}_{i=1}^{s}$ is a right minimal basis of $\mathcal{L}(\lambda)$, where $x_i(\lambda)\in\mathbb{F}[\lambda]^{(\rho_D+1)m}$, then $\{x_i(\la)\}_{i=1}^{s}$ is a right minimal basis of  $\widehat{R}(\lambda)$, and there exists a right minimal basis $\{u_{i}(\la)\}_{i=1}^{s}$ of $R(\lambda)$ such that $x_{i}(\la)=N_D(\la)^Tu_i(\la)$, for $i=1,\hdots,s$.
			
\item[\rm (b)] If $\epsilon_1\leq \cdots \leq \epsilon_s$ are the right minimal indices of $\mathcal{L}(\la)$, then $\epsilon_1\leq \cdots \leq \epsilon_s$ are the right minimal indices of $\wh R(\la)$, and $\epsilon_1-\rho_D\leq \cdots \leq \epsilon_s-\rho_D$ are the right minimal indices of $R(\la)$.
		\end{itemize}
\end{theo}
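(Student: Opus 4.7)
My plan is to prove both claims by traversing the chain $\mathcal{L}(\lambda)\to\widehat{R}(\lambda)\to R(\lambda)$ via Lemma \ref{Lemma: from R to P} (applied to $\mathcal{L}$ viewed as a polynomial system matrix of $\widehat{R}$) and Lemma \ref{Lemma:from R to Rhat}, tracking how column degrees transform at each step.

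First, I would apply Lemma \ref{Lemma: from R to P}(a) to conclude that $\{x_i\}_{i=1}^{s}$ is a basis of $\mathcal{N}_r(\widehat{R})$. To match degrees, I would then invoke Lemma \ref{Lemma:degrees}(a), which gives $\deg\bigl[\begin{smallmatrix} y_i\\ x_i\end{smallmatrix}\bigr]=\deg x_i$, provided the leading coefficient of the "state$+C$" rows of $\mathcal{L}(\lambda)$, namely $\bigl[\begin{smallmatrix} L_A(\lambda)\\ M_C(\lambda)\end{smallmatrix}\bigr]$, has full column rank $(\rho_A+1)n$. This is a minimality-at-infinity property of a strong block minimal basis pencil associated with $\bigl[\begin{smallmatrix} A(\lambda)\\ C(\lambda)\end{smallmatrix}\bigr]$, and the unimodular reversal transformation underlying Theorem \ref{thm:linearization at infinity} translates it into precisely condition \eqref{eq:right min ind 2}.

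Second, I would show that $\{x_i\}_{i=1}^{s}$ is in fact a right minimal basis of $\widehat{R}(\lambda)$ by verifying the two conditions of Theorem \ref{Thm:charac min bases}. Writing the minimal basis of $\mathcal{L}$ as $\mathcal{U}(\lambda) := [u_1\;\cdots\;u_s]$ with $u_i=\bigl[\begin{smallmatrix} y_i\\ x_i\end{smallmatrix}\bigr]$, I would use that $\mathcal{U}(\lambda)$ is column-reduced and that $\mathcal{U}(\lambda_0)$ has full column rank for every $\lambda_0\in\F$, and then transfer both properties to $X(\lambda):=[x_1\;\cdots\;x_s]$ via Lemma \ref{Lemma:ranks} applied to the identities $\mathcal{L}(\lambda_0)\mathcal{U}(\lambda_0)=0$ and (at infinity) $\mathcal{L}_{\rm hcd}\mathcal{U}_{\rm hcd}=0$. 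The pointwise argument further uses that condition \eqref{eq:right min ind 1} combined with regularity of $A(\lambda)$ and $L_A$ being a linearization of $A$ forces $L_A(\lambda_0)$ to be invertible at every $\lambda_0\in\F$; the at-infinity argument uses the rank condition established in the previous paragraph together with $\deg u_i=\deg x_i$.

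Third, I would lift each $x_i$ back to $u_i\in\mathcal{N}_r(R)$ via the bijection $M_r$ of Lemma \ref{Lemma:from R to Rhat}(a), obtaining $x_i=N_D(\lambda)^T u_i$. The vector $u_i$ is polynomial because $K_D(\lambda)x_i(\lambda)=0$ (the bottom block of $\widehat{R}$ equals $K_D$), and the unimodular matrix of Remark \ref{rem_unimodular} yields the explicit polynomial formula $u_i=\widehat{K}_D(\lambda)x_i(\lambda)$. Since $L_D$ is a strong block minimal basis pencil, $N_D(\lambda)^T$ is column-reduced with all column degrees equal to $\rho_D$, whence $\deg x_i=\rho_D+\deg u_i$. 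Applying Theorem \ref{Thm:charac min bases} once more, the analogous transfer from $X=N_D^T U$ to $U:=[u_1\;\cdots\;u_s]$ (column-reducedness via injectivity of $(N_D^T)_{\rm hcd}$, and pointwise full column rank via $N_D(\lambda_0)^T$ having full column rank for every $\lambda_0\in\F$) shows that $\{u_i\}_{i=1}^{s}$ is a right minimal basis of $R(\lambda)$. Part (b) is then immediate: the indices $\epsilon_i=\deg x_i$ are the right minimal indices of $\widehat{R}$, and $\deg u_i=\epsilon_i-\rho_D$ are those of $R$. The main obstacle I anticipate is the rank verification in the first step, where condition \eqref{eq:right min ind 2}, phrased via reversals of $A(\lambda)$ and $C(\lambda)$, must be carefully translated into the full-column-rank condition on the leading coefficient of the pencil $\bigl[\begin{smallmatrix} L_A\\ M_C\end{smallmatrix}\bigr]$.
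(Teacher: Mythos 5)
Your proposal follows the same overall route as the paper's proof in \ref{app:right}: pass from $\mathcal{L}(\lambda)$ to $\widehat{R}(\lambda)$ via Lemma \ref{Lemma: from R to P}, establish that $\{x_i(\lambda)\}_{i=1}^s$ is a minimal basis through the Forney characterization of Theorem \ref{Thm:charac min bases} (rank transfer by Lemma \ref{Lemma:ranks} at finite points and at the leading coefficients, degree preservation by Lemma \ref{Lemma:degrees}, with condition \eqref{eq:right min ind 2} translated through the reversal unimodular matrices \eqref{eq:embedding2}), and then descend to $R(\lambda)$ via Lemma \ref{Lemma:from R to Rhat}, transferring both Forney conditions through $N_{\rho_D}$ and shifting all degrees by $\rho_D$. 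Your polynomiality argument $u_i(\lambda)=\widehat{K}_D(\lambda)x_i(\lambda)$ is even slightly more explicit than the paper's appeal to Forney's Main Theorem, part 4.

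However, one step of your pointwise argument is genuinely false: you claim that condition \eqref{eq:right min ind 1}, together with the regularity of $A(\lambda)$ and the fact that $L_A$ is a linearization of $A$, ``forces $L_A(\lambda_0)$ to be invertible at every $\lambda_0\in\F$.'' Since $L_A(\lambda)$ is a square pencil unimodularly equivalent to $\diag(A(\lambda),I)$, one has $\det L_A(\lambda)=c\,\det A(\lambda)$ for a nonzero constant $c$, so $L_A(\lambda_0)$ is singular exactly at the eigenvalues of $A(\lambda)$ --- that is, at the candidate poles of $R(\lambda)$, which is precisely the case of interest. Condition \eqref{eq:right min ind 1} does not exclude such points; it only guarantees that $C(\lambda_0)$ compensates the rank drop of $A(\lambda_0)$ (if it excluded them, $A(\lambda)$ would be unimodular and the realization trivial --- note that in the example of Section \ref{example} condition \eqref{eq:right min ind 1} holds on all of $\F$ while $A(\lambda)$ is singular at $\lambda=2,-1,-2$). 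As written, your verification that $X(\lambda_0)$ has full column rank therefore fails at every eigenvalue of $A(\lambda)$. The repair is exactly the mechanism you already used at infinity, and it is what the paper does: multiply the first block column $\left[\begin{smallmatrix}M_A(\lambda_0)\\ K_A(\lambda_0)\\ -M_C(\lambda_0)\\ 0\end{smallmatrix}\right]$ on the right by the unimodular $U_A(\lambda_0)^{-1}$ of \eqref{eq:embedding} to expose $\left[\begin{smallmatrix}A(\lambda_0)\\ C(\lambda_0)\end{smallmatrix}\right]$ alongside an identity block; then \eqref{eq:right min ind 1} yields full column rank of that block column for \emph{every} $\lambda_0\in\F$, even where $L_A(\lambda_0)$ is singular, and Lemma \ref{Lemma:ranks} applied to $\mathcal{L}(\lambda_0)B(\lambda_0)=0$ completes the step as you intended. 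A minor related caution: the leading-coefficient identity you write as $\mathcal{L}_{\rm hcd}\mathcal{U}_{\rm hcd}=0$ should instead involve the coefficient of $\lambda$ in the pencil $\mathcal{L}(\lambda)$ (columns of $\mathcal{L}$ of degree $0$ would falsify the literal hcd statement), combined with Lemma \ref{Lemma:degrees} to identify the $x$-part of the highest-degree columns with $\widehat{B}_{\rm hcd}$, which is how the paper's appendix proceeds.
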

\begin{proof} See \ref{app:right}.
\end{proof}

\begin{rem}\rm
We recall that the polynomial matrix $\widehat{K}_D(\lambda)$ in \eqref{eq:embedding} is the left polynomial matrix inverse of $N_D(\lambda)^T$.
Hence, from the right minimal basis $\{x_i(\lambda)\}_{i=1}^s$ of $\widehat{R}(\lambda)$ in part (a) of Theorem \ref{thm:right_minimalindices}, we can recover a right minimal basis of the rational matrix $R(\lambda)$ as $\{u_i(\lambda)\}_{i=1}^s=\{\widehat{K}_D(\lambda)x_i(\lambda)\}_{i=1}^s$.
\end{rem}

\begin{theo}[Left minimal bases and minimal indices]\label{thm:left_minimalindices}
Let $R(\la)\in\F(\la)^{p\times m}$ be a rational matrix expressed in the form $R(\la)=D(\la)+C(\la)A(\la)^{-1}B(\la)$, for some nonsingular polynomial matrix  $A(\la)\in\FF[\la]^ {n\times n}$ and polynomial matrices
		$B(\la)\in\FF[\la]^{n\times m}$, $C(\la)\in\FF[\la]^{p\times n}$ and $D(\la)\in\FF[\la]^{p\times m}$.
Consider the linear polynomial matrix 
\begin{equation*} 
		\mathcal{L}(\la) = \left[
		\begin{array}{c|c}
		M_A(\la) & \phantom{a} M_B(\la) \phantom{a} \\
		K_A(\la) & 0 \\ \hline \phantom{\Big|}
		-M_C(\la)  \phantom{\Big|}& M_D(\la) \\
		0 & K_D (\la)
		\end{array}
		\right]\in\mathbb{F}[\lambda]^{(n+p+\rho_An+\rho_Dm)\times(n+m+\rho_An+\rho_Dm)},
\end{equation*}
as in Theorem \ref{thm:linearization}, where $	L_{A}(\la)=
		\left[
		\begin{smallmatrix}
		M_{A}(\la)  \\
		K_{A} (\lambda) 
		\end{smallmatrix}
		\right] $ and $L_{D}(\la) =
		\left[
		\begin{smallmatrix}
		M_{D}(\la)  \\
		K_{D} (\lambda) 
		\end{smallmatrix}
		\right]
		$ are strong block minimal basis pencils associated with $A(\la)$ and $D(\la),$ respectively, and $\rho_i:=\deg N_{i}(\la)$, for $i\in\{A,D\}$.
Let $\wh R(\la)$ be the transfer function matrix of $\mathcal{L}(\la)$.
If
\begin{align}\label{eq:left min ind 1}
&\rank \begin{bmatrix} A(\la_0) & B(\la_0) \end{bmatrix}=n \quad \mbox{for all} \quad \la_0\in\F, \quad \mbox{and}  \\
\label{eq:left min ind 2}
&\rank\begin{bmatrix}\rev_{\rho_A +1}A(0) & \rev_{\rho_D +1}B(0) \end{bmatrix}=n,
\end{align}
then the following statements hold:
\begin{itemize}
\item[\rm(a)] If $\{z_i(\lambda)^T\}_{i=1}^t$ is a left minimal basis of $\mathcal{L}(\lambda)$, then $z_i(\lambda)^T = \begin{bmatrix} y_i(\lambda)^T & x_i(\lambda)^T \end{bmatrix}$, for $i=1,\hdots,s$, for some left minimal basis $\{x_i(\lambda)^T\}_{i=1}^s$ of $\widehat{R}(\lambda)$, and $x_i(\lambda)^T = \begin{bmatrix} u_i(\lambda)^T & w_i(\lambda)^T \end{bmatrix}$, for $i=1,\hdots,s$, for some left minimal basis $\{u_i(\lambda)^T\}_{i=1}^s$ of $R(\lambda)$.
\item[\rm(b)] If $\eta_1\leq \cdots \leq \eta_t$ are the left minimal indices of $\mathcal{L}(\lambda)$, then  $\eta_1\leq \cdots \leq \eta_t$ are the left minimal indices of $\widehat{R}(\lambda)$ and $R(\lambda)$.
\end{itemize}
\end{theo}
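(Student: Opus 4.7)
The plan is to mirror the argument used for the right-sided counterpart, Theorem \ref{thm:right_minimalindices} (whose proof is deferred to the appendix), by splitting the bijection $F_\ell = T_\ell \circ M_\ell$ of Theorem \ref{thm:from R to L}(b) into two transport steps: first from $\mathcal{L}(\lambda)$ to $\widehat{R}(\lambda)$, and then from $\widehat{R}(\lambda)$ to $R(\lambda)$. At each step I must check three things: that the map sends polynomial vectors to polynomial vectors, that it preserves degrees exactly, and that the image of a minimal basis remains row-reduced with pointwise full row rank, so that Theorem \ref{Thm:charac min bases} certifies minimality at the target level.

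For the first transport step, I would start from a left minimal basis $\{z_i(\lambda)^T\}_{i=1}^t$ of $\mathcal{L}(\lambda)$ and write $z_i(\lambda)^T = [y_i(\lambda)^T\ x_i(\lambda)^T]$, partitioned according to the state and non-state columns of $\mathcal{L}(\lambda)$. Lemma \ref{Lemma: from R to P}(b), applied to $\mathcal{L}(\lambda)$ seen as a polynomial system matrix with transfer function $\widehat{R}(\lambda)$, identifies each $x_i(\lambda)^T$ as an element of $\mathcal{N}_\ell(\widehat{R})$. To get $\deg z_i = \deg x_i$ I would invoke Lemma \ref{Lemma:degrees}(b); the full-row-rank hypothesis on the leading coefficient of the stacked state-plus-$B$ block of $\mathcal{L}(\lambda)$ must be verified, and I would do so by combining the strong block minimal basis structure of $L_A(\lambda)$ (which pins down the leading coefficients of $M_A(\lambda)$ and $K_A(\lambda)$) with hypothesis \eqref{eq:left min ind 2}, propagating the rank condition via Lemma \ref{Lemma:ranks}.

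For the second transport step, I further partition $x_i(\lambda)^T = [u_i(\lambda)^T\ w_i(\lambda)^T]$ according to the block-row structure $\widehat{R}(\lambda)=\bigl[\begin{smallmatrix} M_R(\lambda) \\ K_D(\lambda) \end{smallmatrix}\bigr]$ derived in the proof of Theorem \ref{thm:factorizations}, where $M_R(\lambda)=M_D(\lambda)+C(\lambda)A(\lambda)^{-1}M_B(\lambda)$. Right multiplication of $x_i^T\widehat{R} = 0$ by $N_D(\lambda)^T$, together with the factorization \eqref{eq:right fact}, yields $u_i(\lambda)^T R(\lambda) = 0$, so the truncation $x_i^T \mapsto u_i^T$ maps $\mathcal{N}_\ell(\widehat{R})$ into $\mathcal{N}_\ell(R)$, and Lemma \ref{Lemma:from R to Rhat}(b) certifies that this truncation is actually a bijection. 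Moreover, using $K_D(\lambda)\widehat{N}_D(\lambda)^T = I$ from Remark \ref{rem_unimodular}, the equation $u_i^T M_R + w_i^T K_D = 0$ forces $w_i(\lambda)^T = -u_i(\lambda)^T M_R(\lambda)\widehat{N}_D(\lambda)^T$, so $w_i$ is completely determined by $u_i$.

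The main obstacle will be showing that this truncation preserves degrees, that is, $\deg w_i \leq \deg u_i$ for each element of a left minimal basis of $\widehat{R}(\lambda)$. I expect this to follow from hypotheses \eqref{eq:left min ind 1} and \eqref{eq:left min ind 2}, which play the role of minimality at finite points and at infinity for the realization, together with the strong block minimal basis structure of $L_D(\lambda)$ (which bounds the degree of $\widehat{N}_D(\lambda)$ in exactly the right way to cancel the degree of $M_R(\lambda)$). Concretely, I would apply Theorem \ref{Thm:charac min bases} simultaneously to the matrix with rows $u_i^T$ and the matrix with rows $[u_i^T\ w_i^T]$, using Lemma \ref{Lemma:ranks} to transfer row-reducedness and pointwise full row rank from the latter to the former. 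Once part (a) is established with exact degree preservation in both transport steps, part (b) is immediate: the ordered list of degrees of any left minimal basis is an invariant of the rational subspace, so the left minimal indices of $\mathcal{L}(\lambda)$, $\widehat{R}(\lambda)$, and $R(\lambda)$ must all coincide.
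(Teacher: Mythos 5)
Your first transport step, from $\mathcal{L}(\lambda)$ to $\widehat{R}(\lambda)$, matches the paper's: Lemma \ref{Lemma: from R to P}(b) plus Lemma \ref{Lemma:degrees}(b), with the rank hypothesis of the latter verified from the strong block minimal basis structure and \eqref{eq:left min ind 2}. The genuine gap is in your second step, and it sits exactly at the two places where the hypotheses \eqref{eq:left min ind 1} and \eqref{eq:left min ind 2} must do work. First, the pointwise rank transfer: you propose to apply Lemma \ref{Lemma:ranks} to $x_i(\lambda)^T\widehat{R}(\lambda)=0$, i.e.\ to $u_i(\lambda)^TM_R(\lambda)+w_i(\lambda)^TK_D(\lambda)=0$, to pass full row rank from the matrix with rows $x_i(\lambda_0)^T$ down to the matrix with rows $u_i(\lambda_0)^T$. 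This works only where $M_R(\lambda_0)$ is defined, i.e.\ where $\det A(\lambda_0)\neq 0$; but at those points $\begin{bmatrix} A(\lambda_0) & B(\lambda_0)\end{bmatrix}$ trivially has rank $n$, so \eqref{eq:left min ind 1} is vacuous there. At the zeros of $\det A(\lambda)$ --- the only points where \eqref{eq:left min ind 1} has content --- the identity you want to evaluate involves the rational matrix $M_R(\lambda)$ and cannot be evaluated: $u_i(\lambda)^TM_R(\lambda)$ is a polynomial row vector, but its value at such a $\lambda_0$ is \emph{not} $u_i(\lambda_0)^TM_R(\lambda_0)$, so the Lemma \ref{Lemma:ranks} argument collapses precisely where it is needed. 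Second, the degree claim $\deg w_i\leq \deg u_i$: you correctly flag it as the main obstacle, but offer only the expectation that $\deg \widehat{N}_D(\lambda)$ ``cancels'' the degree of $M_R(\lambda)$. Since $M_R(\lambda)=M_D(\lambda)+C(\lambda)A(\lambda)^{-1}M_B(\lambda)$ is rational, a crude degree count on $w_i(\lambda)^T=-u_i(\lambda)^TM_R(\lambda)\widehat{N}_D(\lambda)^T$ yields no bound of the required form, and no actual argument at the $\widehat{R}$ level is supplied.

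The paper resolves both difficulties by a detour your plan omits: multiplying $z_i(\lambda)^T\mathcal{L}(\lambda)=0$ on the right by $\diag(N_A(\lambda)^T,N_D(\lambda)^T)$ produces the \emph{polynomial} identity $\begin{bmatrix}\alpha_{1i}(\lambda)^T & u_i(\lambda)^T\end{bmatrix}P(\lambda)=0$ for the polynomial system matrix $P(\lambda)=\left[\begin{smallmatrix} A(\lambda) & B(\lambda)\\ -C(\lambda) & D(\lambda)\end{smallmatrix}\right]$, where $\alpha_{1i}(\lambda)^T$ is the sub-block of $z_i(\lambda)^T$ over the $M_A$-rows. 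At this level everything is evaluable everywhere, so \eqref{eq:left min ind 1} together with Lemma \ref{Lemma:ranks} gives full row rank of the $u$-block at every $\lambda_0\in\F$, and the degree claim becomes the leading-coefficient statement \eqref{eq:aux degrees}: if the top coefficient of $\begin{bmatrix}\alpha_{1i}(\lambda)^T & u_i(\lambda)^T\end{bmatrix}$ had zero $u$-part, its $\alpha$-part would annihilate $\begin{bmatrix}\rev_{\rho_A+1}A(0) & \rev_{\rho_D+1}B(0)\end{bmatrix}$, contradicting \eqref{eq:left min ind 2}. One also needs the intermediate comparison \eqref{eq:degrees}, which exploits that $K_A(\lambda)$ and $K_D(\lambda)$ have constant row degrees, to certify that the vectors $\begin{bmatrix}\alpha_{1i}(\lambda)^T & u_i(\lambda)^T\end{bmatrix}$ form a left minimal basis of $P(\lambda)$ with the same indices $\eta_i$ before descending to $R(\lambda)$. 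To repair your proof you must either reconstruct this $P$-level argument or find a genuinely different way to bring \eqref{eq:left min ind 1} to bear at the zeros of $\det A(\lambda)$ and \eqref{eq:left min ind 2} at infinity; as written, the two hardest steps are left unproved.
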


\begin{proof}
See \ref{app:left}.
\end{proof}

\subsection{Recovery of eigenvectors}

 In this section, we assume that the rational matrix $R(\lambda)$ is regular and  show how to recover right and left eigenvectors of $R(\lambda)$ from those of a linearization $\mathcal{L}(\lambda)$ of $R(\lambda)$ as in Theorem \ref{thm:linearization}.

For a finite eigenvalue $\lambda_{0}\in\F$ of a rational matrix $R(\la)$, we denote by $\mathcal{N}_r (R(\la_{0}))$ and  $\mathcal{N}_{\ell} (R(\la_{0}))$ the right and left nullspaces over $\F$ of  the constant matrix $R(\la_{0})$, respectively. 
More precisely, we have

\[
\begin{array}{l}
\mathcal{N}_r (R(\la_0))=\{x\in\FF^{m\times 1}: R(\la_0)x=0\}, \text{ and}\\
\mathcal{N}_\ell (R(\la_0))=\{y^T\in\FF^{1\times p}: y^T R(\la_0)=0\}.

\end{array}
\]

 In Proposition \ref{prop:eigenvectors} we state, without proof, analogous results to those of Theorem \ref{thm:from R to L} but for the right and left nullspaces of $R(\la)$ evaluated at a particular value $\la_0$.

\begin{prop}\label{prop:eigenvectors}  
	Let $R(\la)\in\F(\la)^{p\times m}$ be a rational matrix expressed in the form $R(\la)=D(\la)+C(\la)A(\la)^{-1}B(\la)$, for some nonsingular polynomial matrix  $A(\la)\in\FF[\la]^ {n\times n}$ and polynomial matrices
	$B(\la)\in\FF[\la]^{n\times m}$, $C(\la)\in\FF[\la]^{p\times n}$ and $D(\la)\in\FF[\la]^{p\times m}.$ Consider the linear polynomial matrix \begin{equation*} 
	\mathcal{L}(\la) = \left[
	\begin{array}{c|c}
	M_A(\la) & \phantom{a} M_B(\la) \phantom{a} \\
	K_A(\la) & 0 \\ \hline \phantom{\Big|}
	-M_C(\la)  \phantom{\Big|}& M_D(\la) \\
	0 & K_D (\la)
	\end{array}
	\right],
	\end{equation*} in Theorem \ref{thm:linearization}. Let $\la_0\in\F$ such that $\det A(\la_0)\neq 0$, then the following statements hold:
	
	\begin{enumerate}
		\item[\rm (a)] The linear map
		\begin{align*}
		F_r \, : \, \mathcal{N}_r(R(\la_0)) & \longrightarrow \mathcal{N}_r(\mathcal{L}(\la_0)) \\
		x & \longmapsto \widetilde{x}:=\begin{bmatrix}
		-N_{A}(\la_0)^{T}A(\la_0)^{-1}B(\la_0)\\
		N_D(\la_0)^T
		\end{bmatrix} x
		\end{align*}
		is a bijection between the right nullspaces over $\F$ of $R(\la_0)$ and $\mathcal{L}(\la_0)$. 
		\item[\rm(b)] The linear map
		\begin{align*}
		F_\ell \,:\, \mathcal{N}_\ell(R(\la_0)) & \longrightarrow \mathcal{N}_\ell(\mathcal{L}(\la_0)) \\
		y^T & \longmapsto \widetilde{y}^T := y^T \begin{bmatrix}
		M_{C}(\la_0)L_{A}(\la_0)^{-1} & I_p & -M_{R}(\la_0)\widehat{N}_D(\la_0)^T\end{bmatrix}
		\end{align*}
		is a bijection between the left nullspaces over $\F$ of $R(\la_0)$ and $\mathcal{L}(\la_0)$, where $L_{A}(\la):=\begin{bmatrix}
		M_{A}(\la)\\
		K_{A}(\la)
		\end{bmatrix}$, $M_{R}(\la):=M_D(\lambda)+C(\lambda)A(\lambda)^{-1}M_B(\lambda) $ and $\widehat{N}_D(\lambda)$ is defined in \eqref{eq:embedding}. 
	\end{enumerate}
\end{prop}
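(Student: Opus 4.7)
My plan is to mimic the proof of Theorem \ref{thm:from R to L} but work pointwise at $\lambda_0$ throughout, taking advantage of the hypothesis $\det A(\lambda_0)\neq 0$ and of the fact that a minimal basis never drops rank. I would prove part (a) in detail and note that part (b) goes through analogously.

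\smallskip

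\textbf{Well-definedness of} $F_r$. First I would verify by direct block multiplication that $\mathcal{L}(\lambda_0)F_r(x)=0$ for every $x\in\mathcal{N}_r(R(\lambda_0))$. Using the four block rows of $\mathcal{L}(\lambda_0)$ together with the identities $M_A(\lambda_0)N_A(\lambda_0)^T=A(\lambda_0)$, $K_A(\lambda_0)N_A(\lambda_0)^T=0$, $M_C(\lambda_0)N_A(\lambda_0)^T=C(\lambda_0)$, $M_D(\lambda_0)N_D(\lambda_0)^T=D(\lambda_0)$, $M_B(\lambda_0)N_D(\lambda_0)^T=B(\lambda_0)$, and $K_D(\lambda_0)N_D(\lambda_0)^T=0$, the first two rows collapse to $0$ by the dual-basis relations, the third row yields $[D(\lambda_0)+C(\lambda_0)A(\lambda_0)^{-1}B(\lambda_0)]x=R(\lambda_0)x=0$, and the fourth row is $0$ by duality. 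Linearity of $F_r$ is immediate.

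\smallskip

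\textbf{Injectivity.} If $F_r(x)=0$ then in particular $N_D(\lambda_0)^T x=0$. Since $N_D(\lambda)$ is a minimal basis, Theorem \ref{Thm:charac min bases} guarantees that $N_D(\lambda_0)$ has full row rank at every $\lambda_0\in\mathbb{F}$, so $N_D(\lambda_0)^T$ has full column rank and $x=0$.

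\smallskip

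\textbf{Surjectivity via a dimension count.} This is the main technical step. I would reuse the unimodular transformation from the proof of Theorem \ref{thm:linearization}: the matrix $\mathcal{L}(\lambda)U(\lambda)$, with $U(\lambda)=\diag(U_A(\lambda)^{-1},U_D(\lambda)^{-1})$, is unimodularly equivalent to $\diag(P(\lambda),I_{n\rho_A+m\rho_D})$, where $P(\lambda)$ is the polynomial system matrix of $R(\lambda)$. Evaluating unimodular equivalences at $\lambda_0$ preserves rank, so
\[
\rank \mathcal{L}(\lambda_0)=\rank P(\lambda_0)+n\rho_A+m\rho_D.
\]
Using the block LDU factorization of $P(\lambda)$ given after \eqref{eq_polsysmat}, valid pointwise at $\lambda_0$ because $A(\lambda_0)$ is invertible, one obtains $\rank P(\lambda_0)=n+\rank R(\lambda_0)$. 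Combining this with the sizes of $\mathcal{L}(\lambda_0)$ and $R(\lambda_0)$, the rank--nullity theorem gives $\dim\mathcal{N}_r(\mathcal{L}(\lambda_0))=m-\rank R(\lambda_0)=\dim\mathcal{N}_r(R(\lambda_0))$. Combined with injectivity, this forces $F_r$ to be a bijection.

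\smallskip

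\textbf{Part (b).} I would argue analogously: well-definedness follows from the left-sided factorization \eqref{eq:left fact} evaluated at $\lambda_0$ (note $L_A(\lambda_0)$ is invertible because $L_A$ is a linearization of the regular matrix $A$ and $\det A(\lambda_0)\neq 0$); injectivity follows from the fact that the block $I_p$ appearing inside $F_\ell$ makes the map clearly injective; surjectivity uses the same dimension identity, applied on the left. The step I expect to be slightly delicate is making sure that evaluating the LDU factorization of $P(\lambda)$ and the unimodular equivalence simultaneously at $\lambda_0$ is legitimate; this is fine because the outer factors of the LDU factorization only involve $A(\lambda)^{-1}$, which is defined at $\lambda_0$ by hypothesis, and the unimodular equivalence of Lemma \ref{lemma:unimodular} holds at every point of $\mathbb{F}$.
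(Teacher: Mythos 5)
Your proposal is correct and takes essentially the approach the paper intends: Proposition \ref{prop:eigenvectors} is stated in the paper \emph{without proof}, with the remark that it is analogous to Theorem \ref{thm:from R to L}, and your pointwise version of that argument---block verification of well-definedness, injectivity from the full column rank of $N_D(\lambda_0)^T$ (via Theorem \ref{Thm:charac min bases}), and surjectivity from $\rank \mathcal{L}(\lambda_0)=\rank P(\lambda_0)+n\rho_A+m\rho_D$ together with $\rank P(\lambda_0)=n+\rank R(\lambda_0)$---is exactly the intended one. You also correctly handle the only genuinely pointwise step, namely that the unimodular equivalence of Lemma \ref{lemma:unimodular} and the block LDU factorization of $P(\lambda)$ may be evaluated at $\lambda_0$, because unimodular matrices are invertible at every point of $\mathbb{F}$ and $\det A(\lambda_0)\neq 0$ (which, as you note, also makes $L_A(\lambda_0)$ invertible in part (b)).
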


\begin{rem}\rm
Let $\mathcal{L}(\lambda)$ be as in Proposition \ref{prop:eigenvectors}, let $\lambda_0\in\mathbb{F}$ be an eigenvalue of $\mathcal{L}(\lambda)$ such that $\det A(\lambda_0)\neq 0$, and let $\widetilde{x}$ and $\widetilde{y}^T$ be, respectively, right and left eigenvectors of $\mathcal{L}(\lambda)$ with eigenvalue $\lambda_0$.

By Proposition \ref{prop:eigenvectors}, the vector $\widetilde{y}^T$ must be of the form
\[
\widetilde{y}^T = y^T \begin{bmatrix}
		M_{C}(\la_0)L_{A}(\la_0)^{-1} & I_p & -M_{R}(\la_0)\widehat{N}_D(\la_0)^T\end{bmatrix},
\]
for some left eigenvector of $R(\lambda)$ with eigenvalue $\lambda_0$.
Hence, one can readily recover a left eigenvector $y^T$ of $R(\lambda)$ from the middle block of $\widetilde{y}^T$.
Furthermore, from Proposition \ref{prop:eigenvectors}, we get that $\widetilde{x}$ must be of the form
\[
\widetilde{x}=\begin{bmatrix}
		-N_{A}(\la_0)^{T}A(\la_0)^{-1}B(\la_0)\\
		N_D(\la_0)^T
		\end{bmatrix}x,
\]
for some right eigenvector of $R(\lambda)$ with eigenvalue $\lambda_0$.
Since the polynomial matrix $\widehat{K}_D(\lambda)$ in \eqref{eq:embedding} satisfies $\widehat{K}_D(\lambda)N_D(\lambda)^T=I_{m}$ for all $\lambda\in\mathbb{F}$, we have $\widehat{K}_D(\lambda_0)N_D(\lambda_0)^Tx = x$.
Thus, one can also recover a right eigenvector of $R(\lambda)$ from the right eigenvector $\widetilde{x}$ of $\mathcal{L}(\lambda)$.
\end{rem}

 \section{Application to scalar rational equations}\label{sec:scalar}
%
 
In this section, we show by example how the theory developed in this paper can be used for solving (scalar) rational equations of the form
 	\begin{equation}\label{rational_scalar}
 	\dfrac{c(\la)}{a(\la)}=\dfrac{d(\la)}{b(\la)},
 	\end{equation}
 	where $a(\la)$, $b(\la)$, $c(\la)$ and $d(\la)$ are nonzero scalar polynomials, and where the numerators and the denominators of each rational function can be expressed in terms of different polynomial bases. 
For instance, let us assume that the polynomials $a(\la)$ and $c(\la)$ are written in terms of the monomial basis, that is, 
\[
a(\la)=\displaystyle\sum_{i=0}^{n}a_i\la^{i}
\qquad \mbox{and} \qquad 
c(\la)=\displaystyle\sum_{i=0}^{n}c_i\la^{i},
\] with $n=\max\{ \mathrm{deg}\,a(\la), \mathrm{deg}\,c(\la)\}$, and that the polynomials $b(\la)$ and $d(\la)$ are written in terms of Chebyshev polynomials of the first kind $\{\phi_{j}(\lambda)\}_{j=0}^{\infty}$, that is,  
\[
b(\la)=\displaystyle\sum_{i=0}^{m}b_i\phi_{i}(\lambda)
\qquad \mbox{and} \qquad 
d(\la)=\displaystyle\sum_{i=0}^{m}d_i\phi_{i}(\lambda),
\]
with $m=\max\{ \mathrm{deg}\,b(\la), \mathrm{deg}\,d(\la)\}$. 
We recall that the Chebyshev basis $\{\phi_{j}(\lambda)\}_{j=0}^{\infty}$  satisfies the three-term recurrence relation:
 	\begin{equation}\label{eq:three-term}
 	\frac{1}{2}\phi_{j+1}(\lambda)=\lambda\phi_{j}(\lambda)-\frac{1}{2}\phi_{j-1}(\lambda) \quad j\geq 1
 	\end{equation}
 	where $\phi_{-1}(\lambda)=0,$ $\phi_{0}(\lambda)=1$ and $\phi_{1}(\lambda)=\lambda.$

Notice that, outside the set of the roots of $b(\la)$, that is, in $\Omega:=\C\setminus \{\la_0\in\C : b(\la_0)=0\}$, equation \eqref{rational_scalar} is equivalent to the equation 
	\begin{equation}\label{rational_scalar2}
r(\la):=d(\la)-c(\la)a(\la)^{-1}b(\la)=0.
\end{equation}
For computing the roots of \eqref{rational_scalar2}, that is, the zeros that are not poles, we consider a linear polynomial matrix system of the form
\begin{equation*} 
\mathcal{L}(\la) = \left[
\begin{array}{c|c}
M_a(\la) & \phantom{a} M_b(\la) \phantom{a} \\
K_a(\la) & 0 \\ \hline \phantom{\Big|}
-M_c(\la)  \phantom{\Big|}& M_d(\la) \\
0 & K_d (\la)
\end{array}
\right],
\end{equation*}
where
\begin{align*}
&M_{a}(\lambda):=\left[  a_{n}\lambda+a_{n-1}\quad a_{n-2}\quad a_{n-3}\quad \cdots\quad a_{1}\quad a_{0}\right],\\
&M_{c}(\lambda):=\left[  c_{n}\lambda+c_{n-1}\quad c_{n-2}\quad c_{n-3}\quad \cdots\quad c_{1}\quad c_{0}\right],
\end{align*}
and 
	\begin{equation*}\label{K_a}
	K_a(\lambda):=
	\left[ {\begin{array}{ccccc}
		-1 & \lambda & 0 & \cdots & 0 \\
		0 & -1 & \lambda  & \ddots & \vdots \\
		\vdots & \ddots &\ddots &\ddots & 0  \\
		0 & \cdots & 0 &-1& \lambda 
		\end{array} } \right] \qquad \mbox{and} \qquad
	N_a(\lambda)^T = 
	\begin{bmatrix}
		\lambda^{n-1}\\ \vdots \\ \lambda \\ 1
\end{bmatrix}	 
	\end{equation*}
	is a pair of dual minimal bases, and
\begin{align*}
&M_{b}(\lambda):=\left[ 2 b_{m} \lambda +b_{m-1}\quad b_{m-2}-b_{m}\quad b_{m-3}\quad \cdots\quad b_{1}\quad b_{0}\right],\\
&M_{d}(\lambda):=\left[ 2 d_{m} \lambda +d_{m-1}\quad d_{m-2}-d_{m}\quad d_{m-3}\quad \cdots\quad d_{1}\quad d_{0}\right],
\end{align*}
and, by \eqref{eq:three-term},
			\begin{equation*}\label{may}
			K_{d}(\lambda)=
			\left[ {\begin{array}{cccccc}
				-\frac{1}{2} & \lambda  & -\frac{1}{2} &  0 & \cdots & 0 \\
				0 & -\frac{1}{2} & \lambda  & -\frac{1}{2} & \ddots & \vdots \\
				\vdots & \ddots &\ddots &\ddots & \ddots & 0 \\
				\vdots & & \ddots &-\frac{1}{2} & \lambda  & -\frac{1}{2}  \\
				0 & \cdots & \cdots & 0 &-1& \lambda
				\end{array} } \right]  \qquad \mbox{and} \qquad
			N_d(\lambda)^T = 
			\begin{bmatrix}
				\phi_{m-1}(\lambda) \\ \phi_{m-2}(\lambda) \\ \vdots \\\phi_1(\lambda) \\ \phi_0(\lambda)
			\end{bmatrix}
			\end{equation*}
is another pair of dual minimal bases.
Observe that $a(\lambda)=M_a(\lambda)N_a(\lambda)^T$, $c(\lambda)=M_c(\lambda)N_a(\lambda)^T$, $b(\lambda)=M_b(\lambda)N_d(\lambda)^T$ and $d(\lambda)=M_d(\lambda)N_d(\lambda)^T$.

It is immediate that the  matrices
\[
\begin{bmatrix}
	a(\lambda_0)\\c(\lambda_0)
\end{bmatrix} \qquad \mbox{and} \qquad
\begin{bmatrix}
	a(\lambda_0) & b(\lambda_0)
\end{bmatrix}
\]
have full rank (equal to 1) at every $\la_0$ that is not a root of $a(\la)$ and $c(\la)$ simultaneously.
Hence, if $\frac{c(\la)}{a(\la)}$ is irreducible, i.e., $a(\la)$ and $c(\la)$ do not have roots in common, then, by Theorem \ref{thm:linearization}, $\mathcal{L}(\la)$ is a linearization of $r(\la)$ in $\Omega$. 
Therefore, the zeros of $\mathcal{L}(\la)$ in $\Omega$ are the zeros of $r(\la)$ in $\Omega$.

The idea of transforming the rational problem \eqref{rational_scalar} into an eigenvalue problem is not new \cite{Leo1}.
An algorithm based on the Ehrlich-Aberth iteration that uses this approach can be found in \cite{Leo2}.
\section{Conclusions and future work}\label{conclusion}

Associated with a rational matrix $R(\lambda)$ expressed in the general form $R(\lambda)=D(\lambda)+C(\lambda)A(\lambda)^{-1}B(\lambda)$, we have constructed a family of linear polynomial matrices that, under some minimality conditions, are local linearizations for $R(\lambda)$ in the sense defined in \cite{local}.
Unlike other lineariations for rational matrices recently introduced \cite{AlBe16, strong,dopmarquin2019, SuBai}, ours do not require neither to decompose $R(\lambda)$ into its polynomial and strictly proper rational parts nor to express the strictly proper rational part in state-space form. 
Moreover, we have showed how to recover the eigenvectors of $R(\lambda)$, when $R(\lambda)$ is regular, and the minimal bases and minimal indices of $R(\lambda)$, when $R(\lambda)$ is singular, from those of any of the new linearizations.

We are currently applying the theory developed in this paper to three different problems: (i) we are studying the stability and accuracy of linearization-based algorithms for solving (scalar) rational equations as in \eqref{rational_scalar}; (ii) we are developing novel techniques to linearize product and composition polynomial matrices $A(\lambda)B(\lambda)$ and  $A(B(\lambda))$, where $A(\lambda)$ and $B(\lambda)$ are polynomial matrices, without ever taking the product or the composition; and (iii) we are building trimmed linearizations for polynomial matrices that automatically deflate eigenvalues at zero or/and at infinity without any computational cost.

\appendix

\section{Proof of Theorem \ref{thm:right_minimalindices}}\label{app:right}

\begin{proof}
	Let us consider a right minimal basis $\{z_i(\lambda)\}_{i=1}^s$ of $\mathcal{L}(\lambda)$ with right minimal indices $\epsilon_i=\deg z_i(\lambda)$, for $i=1,\hdots,s$.
	By Lemma \ref{Lemma: from R to P}, we have that the polynomial vectors $z_i(\lambda)$ must be of the form
	\[
	z_i(\lambda) = 
	\begin{bmatrix}
	y_i(\lambda)\\
	x_i(\lambda) 
	\end{bmatrix} \quad (i=1,\hdots,s),
	\]
	for some basis $\{x_i(\lambda)\}_{i=1}^s$ of $\mathcal{N}_r(\widehat{R})$.
	We notice that the vectors $x_i(\lambda)$ must be polynomial vectors, otherwise the vectors $z_i(\lambda)$ would not be polynomial vectors.
	
	We will prove that the polynomial basis $\{x_i(\lambda)\}_{i=1}^s$ is minimal by using Theorem \ref{Thm:charac min bases}.
	For this purpose, let us define the polynomial matrices
	\[
	B(\lambda) :=
	\begin{bmatrix}
	z_1(\lambda) & \cdots & z_s(\lambda)
	\end{bmatrix} \quad \mbox{and} \quad 
	\widehat{B}(\lambda) := 
	\begin{bmatrix}
	x_1(\lambda) & \cdots & x_s(\lambda)
	\end{bmatrix}.
	\]
	
	First, let us show that $\widehat{B}(\lambda)$ has full column rank for every $\lambda_0\in\mathbb{F}$.
	Consider the unimodular (and, so, invertible at every $\lambda_0\in\mathbb{F}$) matrix $U_A(\lambda)^{-1} = \begin{bmatrix} \widehat{N}_A(\lambda)^T & N_A(\lambda)^T \end{bmatrix}$ defined in \eqref{eq:embedding}.
	We have
	\[
	\begin{bmatrix}
	M_A(\lambda_0) \\
	K_A(\lambda_0) \\
	-M_C(\lambda_0) \\ 
	0
	\end{bmatrix} U_A(\lambda_0)^{-1} =
	\begin{bmatrix}
	* & A(\lambda_0) \\
	I_{n\,\rho_A} & 0 \\
	* & C(\lambda_0) \\
	0 & 0 
	\end{bmatrix},
	\]
	for every $\lambda_0\in\mathbb{F}$.
	The equation above, together with \eqref{eq:right min ind 1}, implies that the (constant) matrix 
	\[
	\left[\begin{matrix} M_A(\lambda_0) \\ K_A(\lambda_0) \\ -M_C(\lambda_0) \\0 \end{matrix} \right]
	\]
	has full column rank for every $\lambda_0\in\mathbb{F}$.
	Then, from $\mathcal{L}(\lambda_0)B(\lambda_0)=0$, we obtain
	\[
	\left[\begin{array}{c|c}
	M_A(\lambda_0) & M_B(\lambda_0) \\
	K_A(\lambda_0) & 0 \\
	-M_C(\lambda_0) & M_D(\lambda_0)\\
	0 & K_D(\lambda_0) 
	\end{array}\right]
	\begin{bmatrix}
	* \\ 
	\widehat{B}(\lambda_0)
	\end{bmatrix} = 0,
	\]
	where $*$ indicates a constant matrix that is not important for the argument.
	By Lemma \ref{Lemma:ranks}, we conclude that $\widehat{B}(\lambda_0)$ has full column rank.
	
	Let us consider the highest column degree coefficient matrices of $B(\lambda)$ and $\widehat{B}(\lambda)$, which we denote by $B_{\rm hcd}$ and $\widehat{B}_{\rm hcd}$, respectively.
	Let us show, next, that the matrix $\widehat{B}_{\rm hcd}$ has full column rank.
	For this purpose, let us write
	\begin{equation}\label{eq:notation L}
	\mathcal{L}(\la) = \left[
	\begin{array}{c|c}
	M_A(\la) & \phantom{a} M_B(\la) \phantom{a} \\
	K_A(\la) & 0 \\ \hline \phantom{\Big|}
	-M_C(\la)  \phantom{\Big|}& M_D(\la) \\
	0 & K_D (\la)
	\end{array}
	\right]=: \left[
	\begin{array}{c|c}
	M_{1A}\la + M_{0A} & \phantom{a} M_{1B}\la + M_{0B}\phantom{a} \\
	K_{1A}\la + K_{0A}& 0 \\ \hline \phantom{\Big|}
	-M_{1C}\la - M_{0C} \phantom{\Big|}& M_{1D}\la + M_{0D} \\
	0 & K_{1D}\la + K_{0D}
	\end{array}
	\right].
	\end{equation}
	Consider the unimodular matrix $\widetilde{U}_A(\lambda)^{-1}=\begin{bmatrix} \widetilde{N}_A(\lambda)^T & \rev_{\rho_A} N_A(\lambda)^T \end{bmatrix}$ defined in \eqref{eq:embedding2}. 
	We have
	\[
	\begin{bmatrix}
	\rev_1 M_A(0)  \\
	\rev_1 K_A(0) \\
	-\rev_1 M_C(0) \\
	0
	\end{bmatrix}\widetilde{U}_A(0)^{-1} = 
	\begin{bmatrix}
	M_{1A}  \\
	K_{1A} \\
	-M_{1C} \\
	0
	\end{bmatrix}\widetilde{U}_A(0)^{-1} = 
	\begin{bmatrix}
	* & \rev_{\rho_A+1}A(0) \\
	I_{n\,\rho_A} & 0 \\
	* & \rev_{\rho_A+1} C(0) \\
	0 & 0
	\end{bmatrix}.
	\]
	The above equation, together with \eqref{eq:right min ind 2}, implies that the matrix
	\[
	\begin{bmatrix}
	M_{1A}  \\
	K_{1A} \\
	-M_{1C} \\
	0
	\end{bmatrix}
	\]
	has full column rank.
	Moreover, from Lemma \ref{Lemma:degrees}, we obtain
	\[
	z_i(\lambda) = 
	\begin{bmatrix}
	y_i(\lambda) \\
	x_i(\lambda)
	\end{bmatrix} = 
	\begin{bmatrix}
	y_{\epsilon_i} \\
	x_{\epsilon_i}
	\end{bmatrix}\lambda^{\epsilon_i} + \mbox{ lower order terms, with }x_{\epsilon_i}\neq 0. 
	\]
	Hence, from $\mathcal{L}(\lambda)z_i(\lambda)=0$, we get
	\[
	\begin{bmatrix}
	M_{1A}  \\
	K_{1A} \\
	-M_{1C} \\
	0
	\end{bmatrix}y_{\epsilon_i}+
	\begin{bmatrix}
	M_{1B}  \\
	0 \\
	M_{1D} \\
	K_{1D}
	\end{bmatrix}x_{\epsilon_i}=0,
	\]
	which implies
	\[
	y_{\epsilon_i} = \underbrace{-
		\begin{bmatrix}
		M_{1A}  \\
		K_{1A} \\
		-M_{1C} \\
		0
		\end{bmatrix}^\dagger
		\begin{bmatrix}
		M_{1B}  \\
		0 \\
		M_{1D} \\
		K_{1D}
		\end{bmatrix}}_{=:E}x_{\epsilon_i} \quad (i=1,\hdots,s),
	\]
	where $\dagger$ denotes the pseudoinverse operation.
	Thus, we have $B_{\rm hcd} = \left[\begin{smallmatrix} E\widehat{B}_{\rm hcd} \\ \widehat{B}_{\rm hcd} \end{smallmatrix} \right]$.
	Therefore, the matrix $\widehat{B}_{\rm hcd}$ must have full column rank since, otherwise, the matrix $B_{\rm hcd}$ would not have full column rank.
	
	By Theorem \ref{Thm:charac min bases}, we conclude that $\{x_i(\lambda)\}_{i=1}^s$ is a right minimal basis of $\widehat{R}(\lambda)$.
	Moreover, since $\epsilon_i = \deg z_i(\lambda) = \deg x_i(\lambda)$, for $i=1,\hdots,s$, the right minimal indices of $\widehat{R}(\lambda)$ are equal to the right minimal indices of $\mathcal{L}(\lambda)$. 
	This establishes the first statement in part (a) and in part (b).
	
	Finally, let us prove that $x_i(\lambda)=N_D(\lambda)^Tu_i(\lambda)$, for some right minimal basis $\{u_i(\lambda)\}_{i=1}^s$ of $R(\lambda)$.
	First, from Lemma \ref{Lemma:from R to Rhat}, we get $x_i(\lambda)=N_D(\lambda)^Tu_i(\lambda)$, for $i=1,\hdots,s$, for some basis  $\{u_i(\lambda)\}_{i=1}^s$ of $\mathcal{N}_r(R)$.
	Since $N_D(\lambda)$ is a minimal basis, by \cite[Main Theorem, part 4]{forney}, the vectors $u_i(\lambda)$ must be polynomial vectors.
	
	We will show that the polynomial basis $\{u_i(\lambda)\}_{i=1}^s$ is minimal by using Theorem \ref{Thm:charac min bases}.
	For this purpose, let us define 
	\[
	\widetilde{B}(\lambda) := 
	\begin{bmatrix}
	u_1(\lambda) & \cdots & u_s(\lambda)
	\end{bmatrix}.
	\]
	Clearly, we have $\widehat{B}(\lambda)=N_D(\lambda)^T\widetilde{B}(\lambda)$.
	Since  $\widehat{B}(\lambda_0)$ has full column rank for every $\lambda_0\in\mathbb{F}$, because $\{x_i(\lambda)\}_{i=1}^s$ is a right minimal basis of $\widehat{R}(\lambda)$, we obtain that $\widetilde{B}(\lambda_0)$ must also have full column rank for every $\lambda_0\in\mathbb{F}$.
	Next, let us denote by $\widetilde{B}_{\rm hcd}$ the highest column degree coefficient matrix of $\widetilde{B}$.
	Since $N_D(\lambda)$ is a minimal basis with all its row degrees equal, the highest column degree coefficient of $N_D(\lambda)^T$ is its leading coefficient matrix, which we denote by $N_{\rho_D}$.
	Then, we have $\widehat{B}_{\rm hcd} = N_{\rho_D}\widetilde{B}_{\rm hcd}$.
	Since $\widehat{B}_{\rm hcd}$ has full column rank, so does $\widetilde{B}_{\rm hcd}$.
	
	By Theorem \ref{Thm:charac min bases}, we conclude that $\{u_i(\lambda)\}_{i=1}^s$ is a right minimal basis of $R(\lambda)$.
	Moreover, by \cite[Main Theorem, part 5]{forney} and the fact that $N_D(\lambda)$ is a minimal basis with all its row degrees equal to $\rho_D$, we have
	\[
	\deg x_i(\lambda) = \rho_D + \deg u_i(\lambda) \quad (i=1,\hdots,s), 
	\]
	which shows that the right minimal indices of $R(\lambda)$ are equal to $\epsilon_1-\rho_D\leq \cdots \leq \epsilon_t-\rho_D$.
	This concludes the proof.
\end{proof}

\section{Proof of Theorem \ref{thm:left_minimalindices}}\label{app:left}

\begin{proof}
	Let us consider a left minimal basis $\{z_i(\lambda)^T\}_{i=1}^t$ of $\mathcal{L}(\lambda)$ with left minimal indices $\eta_i = \deg z_i(\lambda)$, for $i=1,\hdots, t$.
	By Lemma \ref{Lemma: from R to P}, the polynomial vector $z_i(\lambda)^T$ must be of the form
	\[
	z_i(\lambda)^T = 
	\begin{bmatrix}
	y_i(\lambda)^T & x_i(\lambda)^T 
	\end{bmatrix} \quad (i=1,\hdots,t),
	\]
	for some basis $\{x_i(\lambda)^T\}_{i=1}^t$ of the left nullspace of $\widehat{R}(\lambda)$.
	We observe the vectors $x_i(\lambda)^T$ must be polynomial vectors because the vectors $z_i(\lambda)^T$ are polynomial.
	
	We will prove that the polynomial basis $\{x_i(\lambda)^T\}_{i=1}^t$ is a minimal basis by using the characterization in Theorem \ref{Thm:charac min bases}.
	With this goal in mind, let us introduce the polynomial matrices
	\[
	X(\lambda):=
	\begin{bmatrix}
	x_1(\lambda)^T \\
	\vdots \\
	x_t(\lambda)^T
	\end{bmatrix} \quad \mbox{and} \quad
	Z(\lambda):=
	\begin{bmatrix}
	z_1(\lambda)^T \\
	\vdots \\
	z_t(\lambda)^T
	\end{bmatrix} =
	\begin{bmatrix}
	y_1(\lambda)^T & x_1(\lambda)^T \\
	\vdots & \vdots \\
	y_t(\lambda)^T & x_t(\lambda)^T
	\end{bmatrix}  =:
	\begin{bmatrix}
	Y(\lambda) & X(\lambda)
	\end{bmatrix}.
	\]
	
	Let us show, first, that the polynomial matrix $X(\lambda)$ has full row rank for every $\lambda_0\in\mathbb{F}$.
	For this purpose, consider the unimodular matrices $U_A(\lambda)^{-1}$ and $U_D(\lambda)^{-1}$ defined in \eqref{eq:embedding}.
	We have that the matrix
	\[
	\begin{bmatrix}
	M_A(\lambda_0) & M_B(\lambda_0) \\
	K_A(\lambda_0) & 0 
	\end{bmatrix}
	\begin{bmatrix}
	U_A(\lambda_0)^{-1}& 0 \\
	0 & U_D(\lambda_0)^{-1}
	\end{bmatrix} =
	\begin{bmatrix}
	* & A(\lambda_0) & * & B(\lambda_0) \\
	I_{n\,\rho_A} & 0 & 0 & 0 
	\end{bmatrix}
	\]
	has full row rank for every $\lambda_0\in\mathbb{F}$ because of \eqref{eq:left min ind 1}.
	Hence,  $\left[\begin{smallmatrix} M_A(\lambda_0) & M_B(\lambda_0) \\ K_A(\lambda_0) & 0 \end{smallmatrix} \right]$ has full row rank for every $\lambda_0\in\mathbb{F}$.
	Then, from $Z(\lambda_0) \mathcal{L}(\lambda_0)=0$, we obtain
	\[
	\begin{bmatrix}
	Y(\lambda_0) & X(\lambda_0)
	\end{bmatrix}
	\begin{bmatrix}
	M_A(\lambda_0) & M_B(\lambda_0) \\
	K_A(\lambda_0) & 0 \\ \hline
	-M_C(\lambda_0) & M_D(\lambda_0)\\
	0 & K_D(\lambda)
	\end{bmatrix} = 0.
	\]
	By Lemma \ref{Lemma:ranks}, we conclude that $X(\lambda_0)$ has full row rank for every $\lambda_0\in\mathbb{F}$.
	
	Let $Z_{\rm hrd}$ and $X_{\rm hrd}$ be  the highest row degree matrix coefficients of $Z(\lambda)$ and $X(\lambda)$, respectively.
	Let us show that the matrix $X_{\rm hrd}$ has full row rank.
	Consider the unimodular matrices $\widetilde{U}_A(\lambda)^{-1}$ and $\widetilde{U}_D(\lambda)^{-1}$ defined in \eqref{eq:embedding2}.
	We have that
	\begin{align*}
	&\begin{bmatrix}
	\rev_1 M_A(0) & \rev_1 M_B(0) \\
	\rev_1 K_A(0) & 0
	\end{bmatrix}
	\begin{bmatrix}
	\widetilde{U}_A(0)^{-1} & 0 \\
	0 & \widetilde{U}_D(0)^{-1}
	\end{bmatrix} =  \\
	&\hspace{6cm}\begin{bmatrix}
	* & \rev_{\rho_A+1}A(0) & * & \rev_{\rho_D+1}B(0) \\
	I_{n\, \rho_A} & 0 & 0 & 0
	\end{bmatrix}
	\end{align*}
	has full row rank for every $\lambda_0\in\mathbb{F}$ because of \eqref{eq:left min ind 2}.
	Hence, using the notation introduced in \eqref{eq:notation L}, we have that the matrix $\left[\begin{smallmatrix} \rev_1 M_A(0) & \rev_1 M_B(0) \\ \rev_1 K_A(0) & 0 \end{smallmatrix} \right]=\left[\begin{smallmatrix} M_{1A} & M_{1B} \\ K_{1A} & 0 \end{smallmatrix} \right]$ has full row rank for every $\lambda_0\in\mathbb{F}$ .
	Thus, Lemma \ref{Lemma:degrees} implies
	\[
	z_i(\lambda)^T = 
	\begin{bmatrix}
	y_i(\lambda)^T & x_i(\lambda)^T 
	\end{bmatrix} = 
	\begin{bmatrix}
	y_{\eta_i}^T & x_{\eta_i}^T
	\end{bmatrix}\lambda^{\eta_i} + \mbox{ lower order terms, with }x_{\eta_i}\neq 0.
	\]
	Then, from  $z_i(\lambda)^T \mathcal{L}(\lambda)=0$, we obtain
	\[
	\begin{bmatrix}
	y_{\eta_i}^T & x_{\eta_i}^T
	\end{bmatrix}
	\begin{bmatrix}
	M_{1A} & M_{1B} \\
	K_{1A} & 0 \\ \hline
	-M_{1C} & M_{1D} \\
	0 & K_{1D}
	\end{bmatrix}=0.
	\]
	Since the matrix $\left[\begin{smallmatrix} M_{1A} & M_{1B} \\ K_{1A} & 0 \end{smallmatrix} \right]$ has full row rank, we have
	\[
	y_{\eta_i}^T = -x_{\eta_i}^T \underbrace{
		\begin{bmatrix}
		-M_{1C} & M_{1D} \\
		0 & K_{1D}
		\end{bmatrix} 
		\begin{bmatrix}
		M_{1A} & M_{1B} \\
		K_{1A} & 0
		\end{bmatrix}^\dagger}_{=:F},
	\]
	where $\dagger$ indicates the pseudoinverse operation. 
	Therefore, $Z_{\rm hrd} = \begin{bmatrix} X_{\rm hrd}F & X_{\rm hrd} \end{bmatrix}$.
	Conclusively, the matrix $X_{\rm hrd}$ has full row rank because $Z_{\rm hrd}$ has full row rank.
	
	From Theorem \ref{Thm:charac min bases}, we get that $\{x_i(\lambda)^T\}_{i=1}^t$ is a left minimal basis of $\widehat{R}(\lambda)$.
	Moreover, by Lemma \ref{Lemma:degrees}, we have $\eta_i = \deg z_i(\lambda) = \deg x_i(\lambda)$, for $i=1,\hdots,t$.
	Therefore, $\mathcal{L}(\lambda)$ and $\widehat{R}(\lambda)$ have the same left minimal indices.
	This establishes the first statement in part (a) and in part (b).
	
	By Lemma \ref{Lemma:from R to Rhat}, the vector $x_i(\lambda)^T$ must be of the form
	\[
	x_i(\lambda)^T = \begin{bmatrix}
	u_i(\lambda)^T & w_i(\lambda)^T 
	\end{bmatrix}\quad (i=1,\hdots,t), 
	\]
	for some basis $\{u_i(\lambda)^T\}_{i=1}^t$ of the left nullspace of $R(\lambda)$.
	We notice that the vectors $u_i(\lambda)$ must be polynomial vectors because the $x_i(\lambda)$ vectors are polynomial.
	Our final goals are, first, to show that $\{u_i(\lambda)^T\}_{i=1}^T$ is a left minimal basis of $R(\lambda)$ and, second, to show that $\deg u_i(\lambda) = \eta_i$, for $i=1,\hdots, t$.
	
	We begin by noticing that if we combine Lemmas \ref{Lemma: from R to P} and \ref{Lemma:from R to Rhat}, we get that the vector $z_i(\lambda)^T$ must be of the form
	\[
	z_i(\lambda)^T = 
	\begin{bmatrix}
	\alpha_{1i}(\lambda)^T & \alpha_{2i}(\lambda)^T & u_i(\lambda)^T & \alpha_{3i}(\lambda)^T
	\end{bmatrix} \quad (i=1,\hdots,t),
	\]
	for some vectors $\alpha_{ji}(\lambda)$, with $j=1,2,3$, conformable with the partition of $\mathcal{L}(\lambda)$.
	We claim that $\{\begin{bmatrix} \alpha_{1i}(\lambda)^T & u_i(\lambda)^T \end{bmatrix} \}_{i=1}^t$ is a left minimal basis of the polynomial system matrix $P(\lambda) = \left[\begin{smallmatrix} A(\lambda) & B(\lambda) \\ -C(\lambda) & D(\lambda) \end{smallmatrix}\right]$ with left minimal indices $\eta_1\leq \cdots\leq \eta_t$.
	To see this, first, from $z_i(\lambda)^T\mathcal{L}(\lambda)=0$, we obtain
	\begin{equation}\label{eq:aux1}
	\begin{bmatrix}
	\alpha_{1i}(\lambda)^T & u_i(\lambda)^T 
	\end{bmatrix}
	\begin{bmatrix}
	M_A(\lambda) & M_B(\lambda) \\
	-M_C(\lambda) & M_D(\lambda) 
	\end{bmatrix} +
	\begin{bmatrix}
	\alpha_{2i}(\lambda)^T & \alpha_{3i}(\lambda)^T
	\end{bmatrix}
	\begin{bmatrix}
	K_A(\lambda) & 0 \\
	0 & K_D(\lambda)
	\end{bmatrix} = 0.
	\end{equation}
	Multiplying \eqref{eq:aux1} on the right by $\diag(N_A(\lambda)^T,N_D(\lambda)^T)$ yields
	\begin{equation}\label{eq:leftvector_polysystem}
	\begin{bmatrix}
	\alpha_{1i}(\lambda)^T & u_i(\lambda)^T 
	\end{bmatrix}
	\begin{bmatrix}
	A(\lambda) & B(\lambda) \\
	-C(\lambda) & D(\lambda)
	\end{bmatrix}=0.
	\end{equation}
	Hence, the polynomial vector $\begin{bmatrix} \alpha_{1i}(\lambda)^T & u_i(\lambda)^T \end{bmatrix}\in\mathcal{N}_\ell(P)$.

	Next, let us consider the polynomial matrix
	\[
	\widehat{U}(\lambda) :=
	\begin{bmatrix}
	\alpha_{11}(\lambda)^T & u_1(\lambda)^T \\
	\vdots & \vdots \\
	\alpha_{1t}(\lambda)^T & u_t(\lambda)^T
	\end{bmatrix} = : 
	\begin{bmatrix}
	A_1(\lambda) & U(\lambda)
	\end{bmatrix}.
	\]
	From \eqref{eq:leftvector_polysystem}, $\widehat{U}(\lambda)P(\la)=0$. Let us show that $\widehat{U}(\lambda_0)$ has full row rank for every $\lambda_0\in\mathbb{F}$.
	From $Z(\lambda_0)\mathcal{L}(\lambda_0)=0$, we obtain
	\[
	\begin{bmatrix}
	\widehat{U}(\lambda_0) & *
	\end{bmatrix} 
	\begin{bmatrix}
	M_A(\lambda_0) & M_B(\lambda_0) \\
	-M_C(\lambda_0) & M_D(\lambda_0) \\ \hline
	K_A(\lambda_0) & 0 \\
	0 & K_D(\lambda_0)
	\end{bmatrix} = 0.
	\]
	Since $\diag(K_A(\lambda_0),K_D(\lambda_0))$ has full row rank for every $\lambda_0\in\mathbb{F}$ (because $K_A(\lambda)$ and $K_D(\lambda)$ are both minimal basis), we conclude, by Lemma \ref{Lemma:ranks}, that the matrix $\widehat{U}(\lambda_0)$ has full row rank for every $\lambda_0\in\mathbb{F}$.
	
	Let us denote by $\widehat{U}_{\rm hrd}$ the highest row degree coefficient matrix of $U_{\rm hrd}$, and  show that  $\widehat{U}_{\rm hrd}$  has full row rank.
	Since $K_A(\lambda)$ and $K_D(\lambda)$ are minimal bases with constant row degrees (equal to 1), by \cite[Main Theorem, part 5]{forney}, we get from \eqref{eq:aux1}
	\begin{align*}
	\deg 
	\begin{bmatrix}
	\alpha_{2i}(\lambda)^T & \alpha_{3i}(\lambda)^T
	\end{bmatrix}
	\begin{bmatrix}
	K_A(\lambda) & 0 \\
	0 & K_D(\lambda)
	\end{bmatrix} = &
	1 + \deg
	\begin{bmatrix}
	\alpha_{2i}(\lambda)^T & \alpha_{3i}(\lambda)^T
	\end{bmatrix} \leq \\
	&1 + \deg
	\begin{bmatrix}
	\alpha_{1i}(\lambda)^T & u_i(\lambda)^T 
	\end{bmatrix}.
	\end{align*}
	Hence, we have
	\begin{equation}\label{eq:degrees}
	\deg
	\begin{bmatrix}
	\alpha_{2i}(\lambda)^T & \alpha_{3i}(\lambda)^T
	\end{bmatrix} \leq
	\deg 
	\begin{bmatrix}
	\alpha_{1i}(\lambda)^T & u_i(\lambda)^T 
	\end{bmatrix}.
	\end{equation}
	Thus, from $Z(\lambda)\mathcal{L}(\lambda)=0$, we obtain
	\[
	\begin{bmatrix}
	\widehat{U}_{\rm hrd} & * 
	\end{bmatrix}
	\begin{bmatrix}
	M_{1A} & M_{1B} \\
	-M_{1C} & M_{1D} \\ \hline
	K_{1A} & 0 \\
	0 & K_{1D}
	\end{bmatrix}=0.
	\]
	Since the matrix $\diag(K_{1A},K_{1D})$ has full row rank (because $K_A(\lambda)$ and $K_D(\lambda)$ are both minimal bases with constant row degrees), by Lemma \ref{Lemma:ranks}, we have that the matrix $\widehat{U}_{\rm hrd}$ has also full row rank.
	
	Conclusively, since $\dim \mathcal{N}_\ell(\mathcal{L})=\dim \mathcal{N}_\ell(P)$ (becasue $\mathcal{L}(\lambda)$ is a linearization of $P(\lambda)$ in the classical matrix polynomial sense), Theorem \ref{Thm:charac min bases} implies that 
	$\{\begin{bmatrix} \alpha_{1i}(\lambda)^T & u_i(\lambda)^T \end{bmatrix} \}_{i=1}^T$ is a left minimal basis of $P(\lambda)$, as we claimed.
	Moreover, by $\eqref{eq:degrees}$, we have $\eta_i = \deg z_i(\lambda)^T = \deg \begin{bmatrix} \alpha_{1i}(\lambda)^T & u_i(\lambda)^T  \end{bmatrix}$.
	Hence, the left minimal indices of $P(\lambda)$ are also equal to $\eta_1\leq \cdots \leq \eta_t$.
	
	After this small detour, we are ready to prove that $\{u_i(\lambda)^T\}_{i=1}^t$ is a minimal basis of $R(\lambda)$, with minimal indices $\eta_1\leq \cdots\leq \eta_t$, by using Theorem \ref{Thm:charac min bases}.
	To this goal, let us consider the polynomial matrix
	\[
	U(\lambda) :=
	\begin{bmatrix}
	u_1(\lambda)^T \\
	\vdots\\
	u_t(\lambda)^T
	\end{bmatrix}.
	\]
	From $\widehat{U}(\lambda_0)P(\lambda_0)=0$, we get
	\[
	\begin{bmatrix}
	A_1(\lambda_0) & U(\lambda_0)  
	\end{bmatrix}
	\begin{bmatrix}
	A(\lambda_0) & B(\lambda_0) \\ 
	-C(\lambda_0) & D(\lambda_0)
	\end{bmatrix}=0.
	\] 
	Since the matrix $\begin{bmatrix} A(\lambda_0) & B(\lambda_0) \end{bmatrix}$ has full row rank for every $\lambda_0\in\mathbb{F}$, Lemma \ref{Lemma:ranks} implies that $U(\lambda_0)$ has also full row rank for every $\lambda_0\in\mathbb{F}$.
	
	Let $U_{\rm hrd}$ denote the highest row degree coefficient matrix of $U(\lambda)$.
	It remains to show that $U_{\rm hrd}$ has full row rank. First, let us prove that
		\begin{equation}\label{eq:aux degrees}
		\deg \begin{bmatrix} \alpha_{1i}(\lambda)^T & u_i(\lambda)^T \end{bmatrix} = \deg u_i(\lambda)^T \quad (i=1,\hdots,t).
		\end{equation}
		By contradiction, assume  \begin{equation*}\label{eq:aux degrees2}
		\begin{bmatrix} \alpha_{1i}(\lambda)^T & u_i(\lambda)^T \end{bmatrix} = \begin{bmatrix}\alpha_i^{T} & 0\end{bmatrix}\la^{\eta_i} + \mbox{ lower order terms, with }\alpha_i\neq 0.
		\end{equation*}
		Since $\begin{bmatrix} \alpha_{1i}(\lambda)^T & u_i(\lambda)^T \end{bmatrix} P(\lambda)=0$, we have that $\alpha_{i}^{T}\begin{bmatrix}\rev_{\rho_A +1}A(0) & \rev_{\rho_D +1}B(0) \end{bmatrix}=0$. Then $\alpha_i=0$ by \eqref{eq:left min ind 2}, which is a contradiction. Hence $\widehat{U}_{\rm hrd}$ must be of the form $\widehat{U}_{\rm hrd} = 
	\begin{bmatrix}
	\mathcal{A} & U_{\rm hrd}
	\end{bmatrix}$,
	for some matrix $\mathcal{A}$. Considering again that $\widehat{U}(\lambda)P(\lambda)=0$, we have  $$\begin{bmatrix}
	A_1(\lambda) & U(\lambda)  
	\end{bmatrix}
	\begin{bmatrix}
	A(\lambda)  \\ 
	-C(\lambda) 
	\end{bmatrix}=0 \text{ and }\begin{bmatrix}
	A_1(\lambda) & U(\lambda)  
	\end{bmatrix}
	\begin{bmatrix}
	B(\lambda)  \\ 
	D(\lambda) 
	\end{bmatrix}=0.$$ Therefore,
	\[
	\begin{bmatrix}
	\mathcal{A}  & U_{\rm hrd}
	\end{bmatrix} 
	\begin{bmatrix}
	\rev_{\rho_A +1} A(0) & \rev_{\rho_D +1} B(0) \\
	-\rev_{\rho_A +1} C(0) & \rev_{\rho_D +1} D(0)
	\end{bmatrix}=0.
	\]
	Taking into account condition \eqref{eq:left min ind 2}, Lemma \ref{Lemma:ranks} implies that $U_{\rm hrd}$ has full row rank. 
	
	Thus, by Theorem \ref{Thm:charac min bases}, $\{u_i(\lambda)^T\}_{i=1}^T$ is a left minimal basis of $R(\lambda)$.
	Moreover, by \eqref{eq:aux degrees}, the left minimal indices of $R(\lambda)$ are $\eta_1\leq \cdots \leq \eta_t$, as we wanted to prove.
\end{proof}

\bibliographystyle{plain}

\end{document}